\newtheorem*{theorem*}{Theorem}
\date{\today}
\def \B {{\mathfrak  B}}
\def \U {{\mathcal  U}}
\def\bel{\begin{equation}\label}
\def\eeq{\end{equation}}
\def\cS{\mathfrak{T}}
\def\h{\Psi}
\def \Ba {\mathbb{B}}
\def \v{{\bf v}}
\def \cbf{{\bf c}}
\def \va{\mathpzc{v}}
                                                                                                                                                                                                                                                                                                                                            \def\dime{\mathfrak{d}}
\def \vecs{\vec{\bf s}}
\def \vecc{\vec{\bf c}}
\def \vech{\vec{\bf h}}
\def \veceps{\vec{\bf \epsilon}}
\def\vz{\mathpzc{v}}
\def\vsm{\vskip0.3truecm}
\newtheorem{remark}{Remark}[section]
\newtheorem{definition}{Definition}[section]
\newtheorem{theorem}{Theorem}[section]
\newtheorem{lemma}[theorem]{Lemma}
\newtheorem{proposition}[theorem]{Proposition}
\newtheorem{corollary}[theorem]{Corollary}
\def\ds{\displaystyle}
\def\bega{\begin{array}}
\def\enda{\end{array}}
\def\bepmatrix{\begin{pmatrix}}
\def\enpmatrix{\end{pmatrix}}
\def\vsm{\vskip0.3truecm}
\def\bel{\begin{equation}\label}
\def\eeq{\end{equation}}
\newcommand\ee{\end{equation}}
\def\benl{\begin{equation*}}
\def\eenl{\end{equation*}}
\def\be{\begin{equation}}
\def\beq{\begin{equation}}
\def\bel{\begin{equation}\label}
\def\eeq{\end{equation}}
\newcommand\ba{\begin{array}}
\newcommand\ea{\end{array}}
\def\vs{\vskip 2em}
\def\begi{\begin{itemize}}
\def\endi{\end{itemize}}
\def\d{ {\rm d} }
\def\R{I\!\!R}
\def\R{I\!\!R}
\newcommand{\cR}{\mathbb{R}}
\newcommand{\cN}{\mathbb{N}}
\def\A{\mathcal{A}}
\def\B{\mathfrak{B}}
\def\C{\mathcal{C}}
\def\K{\mathcal{K}}
\def\R{\mathcal{R}}
\def\U{\mathcal{U}}
\def\ab{\bar{a}}
\def\sb{\bar{s}}
\def\ub{\bar{u}}
\def\xbo{\check x}
\def\wb{\bar{w}}
\def\xb{\bar{x}}
\def\yb{\bar{y}}
\def\eps{\varepsilon}
\def \dd{\mathrm{d}}
\def\sol{\textcolor{blue}}
\DeclareMathAlphabet{\mathpzc}{OT1}{pzc}{m}{it}
\title[Nonlinear impulsive systems]{ A Higher-order Maximum Principle \\ for Impulsive Optimal Control Problems}
\begin{document}

\vs
\author[M.S. Aronna]{M. Soledad Aronna}
\address{M.S. Aronna,  Escola de Matem\'atica Aplicada, Funda\c{c}\~ ao Get\'ulio Vargas,  Rio de Janeiro 22250-900, Brazil
 }
\email{soledad.aronna@fgv.br}

\author[M. Motta]{Monica Motta}
\address{M. Motta, Dipartimento di Matematica,
Universit\`a di Padova\\ Padova  35121, Italy}
\email{motta@math.unipd.it}

\author[F. Rampazzo]{Franco Rampazzo}
\address{F. Rampazzo, Dipartimento di Matematica,
Universit\`a di Padova\\ Padova  35121, Italy}
\email{rampazzo@math.unipd.it}

\maketitle
\begin{abstract}
 We consider a nonlinear system, affine with respect to an unbounded control $u$ which is allowed to range    in a closed  cone.  To this system we associate a  Bolza type minimum problem, with a Lagrangian  having sublinear growth with respect  to  $u$.  This lack of coercivity gives the problem an  {\it impulsive} character,  meaning  that  minimizing sequences of trajectories  happen to converge towards   discontinuous paths. As is known, a distributional approach does not make sense in such a nonlinear setting, where, instead, a suitable embedding in the graph-space is needed.  
 We provide higher order  necessary optimality conditions for  properly defined  impulsive minima, in the form of equalities and inequalities  involving iterated Lie brackets of the dynamical vector fields. These conditions are derived under very weak regularity assumptions and without any constant rank conditions.
\end{abstract}

\thanks

\section{Introduction}
In this paper we establish necessary optimality conditions for the {\it space-time, impulsive extension} of the free end-time  optimal control problem
  \be
\label{cost}
\text{minimize} \ \h(T, x(T)) +\int_0^T \ell(x(t),u(t),a(t))\,dt,
\ee
where the minimization is performed over the set of   {\it processes} $(T, u, a, x,\vz)$ verifying 
\bel{E}
\left\{
\begin{array}{l}
\ds \frac{dx}{dt}(t) = f(x(t), a(t)) + \ds\sum_{i=1}^m  g_i(x(t)) {u^i}(t), \quad \text{a.e. $t\in [0,T]$,} \\
\ds\frac{d\vz}{dt}(t) = |u(t)|,\\ [1.5ex]
\ds (x,\vz)(0)=({\xbo},0), \quad  \vz(T)\leq K,\quad (T, x(T)) \in \cS.
\end{array}\right.
\eeq
Here, $0<K\le+\infty$, the {\it target}  $\cS$  is a closed subset of $\cR_+\times \cR^{n}$,   the control $a$ ranges in a compact set  $A\subset\cR^q$, and standard regularity hypotheses are verified by the vector fields $f$, $g_i$ and the cost functions $\ell$, $\Psi$. Instead, 
less usual  assumptions  are  made on the  control maps $u$ and  on the $u$-growth of the  Lagrangian $\ell$.  Precisely: 

 \noindent
 \ (i) the  {\it unbounded} controls $u$, which 
 take values  in a closed cone ${\mathcal C}\subseteq\cR^m$, are $L^1$ functions verifying
 $
  \|u\|_1:=\int_0^T |u(t)| dt=\vz(T) \leq K;
$  

\noindent
\ (ii) the {\rm Lagrangian}  $\ell: \cR^n\times\C\times A \to \cR$ has the form $\ell(x,u,a)=\ell_0(x,a)+\ell_1(x,u), $ 
 with a continuous  {\it recession function} $\hat\ell_1$ given by
 $$
 \hat\ell_1(x,w^0,w):= \ds\lim_{r\to w^0 } r\ell_1\left(x,\frac{w}{r}\right),
 \quad \text{for } (x,w^0,w) \in \cR^n\times \cR_+\times \C.$$  In particular $\ell$ has sublinear growth in $u$.

On the  one hand, optimal control problems  with such a slow $u$-growth in the cost   are motivated by several applications \cite{BressanAldo1989,BressanRampazzo2010,GajRamRap08,Catlla2008,Marle1991,AzimovBishop2005, Rampazzo1991,BressanMotta1993}.  For instance, a dynamics  affine in the unbounded controls governs  the motion of a mechanical system of mutually linked rigid bodies. In that case, the controlled parameters are  the speeds of the  {\it shape } coordinates.

On the other hand,  the lack of  a sufficiently fast $u$-growth  of the cost $\ell$ 
may cause minimizing sequences of trajectories to tend towards discontinuous paths.\footnote{ For instance, if one considers the  minimum  time  problem --i.e. $\ell\equiv 1$ and $\Psi\equiv 0$-- with a  target in $\cR^n$  intersecting  the orbit of the vector field $g_1$ issuing from the initial point $\check x$,  the infimum  value is  zero and the  `extended' optimal trajectory should run through the mentioned orbit {\it with infinite speed}.} 
Motivated by that, and following a nowadays standard approach
 \footnote{Because of the nonlinearity of the dynamics, a {\it distributional} interpretation lacks essential prerequisites for robustness \cite{Haj85}.}  \cite{Warga1972,Rishel1965,BressanRampazzo1988,MottaRampazzo1995,
Millerbook,SilvaVinter1996,GuerraSarychev2015},
one `compactifies' the problem by embedding the original control system 
in the {\it extended,  space-time,} system
\bel{extendedintro}
  \left\{\begin{array}{l}
\ds\frac{d{y^0}}{ds}  (s) = w^0(s),  \\[0.1cm]
\ds\frac{dy}{ds} (s) = f( y(s),\alpha(s))w^0(s)+ \sum_{i=1}^{m}g_{i}( y(s))w^i(s), \\[0.1cm]
\ds\frac{d\beta}{ds} (s) = |w(s)|, \\ [0.2cm]
(y^0,y,\beta)(0)={(0,\check x,0)}, \\ (y^0(S), y(S),\beta(S)) \in \cS\times[0,K],
\end{array}\right.  \quad \text{a.e. $s\in[0,S]$,}
\eeq
and considering   the {\it extended} cost functional
  \be
\label{costext}
 \h(y^0(S), y(S) )  +\int_0^S \ell^e(y(s),w^0(s), w(s),\alpha(s))ds,
\ee
where
$
 \ell^e(x,w^0,w,a) := \ell_0(x,a)w^0 + \hat\ell_1(x,w^0,w).
$
In particular, for  problem \eqref{extendedintro}-\eqref{costext} one can consider {\it bounded} controls verifying  $w^0(s)+|w(s)|=1$, $w(s)\in \C$, $w^0(s)\geq 0$, and $a(s)\in A$, for a.e. $s\in[0,S]$. As soon as $w^0>0$ a.e.,  \eqref{extendedintro}-\eqref{costext} is nothing but a time reparameterization  
 of  the original control problem \eqref{cost}-\eqref{E}, and the unboundedness of $u$ is reflected in the possibility of taking $w^0$ going to $0$.
By allowing processes  $(S,w^0,w,\alpha, y^0,y,\beta)$ such that $w^0=0$ a.e.  on non-degenerate subintervals $[s_1,s_2]$, we are embedding  \eqref{cost}-\eqref{E}
 in a more general problem. More precisely, the  time-variable $t=y^0(s)$  has a constant value $\bar t$  on  such an interval  $[s_1,s_2]$, while 
   the space trajectory  $y(s)$   evolves according to  the 
nonlinear  dynamics $\frac{dy}{ds}=\sum_{i=1}^{m}g_{i}( y)w^i.$ In particular, the jump
$x(\bar t+)-x(\bar t-) = y(s_2)-y(s_1)$ depends on the restriction $w_{|_{ [s_1,s_2]}}$.
Our necessary  conditions concern a minimum for the extended, space-time, problem  \eqref{extendedintro}-\eqref{costext}.
We begin by  exploiting the {\it rate-independence} of the extended  problem  in order  to establish  a First Order Maximum Principle, in Theorem \ref{PMPe}.  While the idea is nothing but new --see, for instance, \cite{SilvaVinter1997,PereiraSilva2000,Millerbook,AruKarPer10,MottaRampazzoVinter2019}-- here  we stress  a  kind of competition among the  Hamiltonian corresponding to the drift and  the `non-drift'  Hamiltonian.
 
 However, the  main novelty of the present paper  consists  in a   Maximum Principle containing   {\it higher-order} necessary conditions which involve iterated Lie brackets (see Theorem \ref{PMPeho}). The crucial tool to prove our conditions consists in the construction of variations by  coupling asymptotic formulas for Lie brackets and suitable   insertions of  impulsive intervals. In particular,  in connection with  sets of lines contained in the cone $C$  (see Section \ref{SecProblem}), the dual product of the  adjoint path  with the corresponding iterated Lie brackets turns out to vanish.
 
  Lie bracket-involving  necessary  conditions have been widely investigated  within  classical  geometric control theory, a quite incomplete list of reference being  \cite{ABDL12,Kno80,Krener1977,Sussmann1999geometry,SchaettlerLedzewicz2012,barbero2012presymplectic}. 
Instead, as far as impulsive control theory is concerned, the  only 
 results involving  higher order --actually, second order-- conditions deal, at our knowledge,   with the so-called {\it  commutative case},  where $[g_i,g_j]\equiv0$ for all  $i,j=1,\ldots,m,$ (see e.g.  \cite{ArutyunovKaramzin2018,AruDykPer05,Dykhta1994}).   
  Let us  mention that  our conditions might be  regarded as a generalization to impulsive trajectories of   \cite{ChittaroStefani2016}, where one considers   
 the (non extended) minimum time problem with  $L^\infty$, but not a priori bounded, controls { (see Remark \ref{rcs}). }
Let us stress that we assume neither  the invariance    of  the dimension of the Lie algebra generated by  $g_1,\ldots,g_m$  nor the mere existence of such algebra. This is made possible by the fact that our Lie bracket-shaped  variations are based on rather general asymptotic formulas,  which demand  very low regularity on the vector fields: in particular,  most of these formulas assume {\it just the continuity} of the involved Lie brackets.
 \vsm
The article is organized as follows. In  Subsection \ref{SecNotation}  we introduce some general notation and  a set of definitions and technical results involving Lie brackets. The optimal control problem is described in detail  in Section \ref{SecProblem}, together with its {\em space-time} extension.  In Section \ref{SNC}, Theorem \ref{PMPe},  we state a  First Order Maximum Principle. In Section \ref{SMPho}, Theorem \ref{PMPeho},  
we establish our main result, namely the  Higher Order Maximum Principle, whose proof is given in  Section \ref{SecProof}.

 
\subsection{Notations and preliminaries}\label{SecNotation}
\subsubsection{Some basic notation}
Let $N\ge1$ be an integer. For any $i\in\{1,\dots,N\}$, we write ${\bf e}_i$  for the $i$th element of the  canonical basis of $\cR^N.$ Given $\check{x}\in \cR^N,$  
$\Ba_N({\check x}):=\{x\in \cR^N : |x-{\check x}| \leq 1\}$,  $\Ba_N:=\Ba_N(0)$,  and $\partial \Ba_N:=\{x\in \cR^N: \  |x|=1\}$.   
 A subset $\K\subseteq {\cR^N}$  is a   {\it cone} if $\alpha x\in \K$ whenever $\alpha>0$, $x\in \K$. Given a subset   $X\subseteq \cR^N,$  we will use  $X^{\bot}$ to denote the  {\it polar} of $X$, i.e.    $X^{\bot} \doteq \{ p\in \cR^N: \   p\cdot x \leq 0,\quad \text{for all }\; x\in X\}$.
Given an interval $I$ and $X\subseteq \cR^N,$ we write $AC(I,X)$ for the space of absolutely continuous functions, $C^0(I,X)$ for the space of continuous functions,   $L^1(I,X)$  for the Lebesgue space of $L^1$-functions, and $L^\infty(I,X)$   for the Lebesgue space of measurable, essentially bounded functions, respectively, defined on $I$ and assuming values in $X.$  As customary, we shall use 
 $\|\cdot\|_{L^\infty(I,X)}$,  and $\|\cdot\|_{L^1(I,X)}$ to denote 
 the essential supremum norm and the $L^1$-norm, respectively. When no confusion may arise, we will simply write  $\|\cdot\|_\infty$ and $\|\cdot\|_1$.   We set $\cR_+:=[0,+\infty)$ and $\cR_-:=(-\infty,0]$. Given an  integer  $ k\ge 0$ and an open subset $\Theta\subseteq\cR^N$, we  say that a function $F\colon \Theta \to \cR^N$ is {\em of class $C^k$} if it possesses continuous partial derivatives up to order $k$ in $\Theta$.   
Given a  real-valued function $F:[a,b]\to\cR$, we define  the essential infimum of $F$ as  $\ds{\text{ess}\inf}  F:=\sup\big\{r\in\cR:  \ {\rm meas}\{x\in[a,b]: \ F(x)<r\}=0\big\}$, where ${\rm meas}$ denotes the Lebesgue measure.
Finally, 
for all   $\tau_1$, $\tau_2\in (0,+\infty)$ and  for any pair $(z_1,z_2)\in C^0([0,\tau_1],\cR^N)\times C^0([0,\tau_2],\cR^N)$, let us define the distance
 \begin{equation}\label{dinfty}
\d\big((\tau_1,z_1),(\tau_2,z_2)\big) :=  
 | \tau_1 - \tau_2|+  \|\tilde z_1- \tilde z_2\|_\infty ,
  \end{equation} 
 where  for any  $z\in C^0([0,\tau],\cR^N)$,  $\tilde z$  denotes its continuous constant extension to $\cR_+$. 
 
\subsubsection{Boltyanski approximating cones}
\begin{definition}\label{approximatingcone} Let   $ Z$ be a subset of $\cR^N$ for some integer $N\ge1$.
Fix  $z\in Z$. We say that  a convex cone $\K\subseteq \cR^N$ is a {\em Boltyanski approximating  cone  for $Z$
at $z$} if there exist a convex cone  $C\subset\cR^{M} $ for some integer $
M\geq 0 $, a neighborhood $V$ of $0$ in $\cR^M,$ and a
continuous map $ F: V\cap C \to Z$ such that: $ F(0)=z $;  there exists a  linear map $L:\cR^M\to \cR^N$  verifying
$F(v) = F(0) + Lv + o(|v|)$  for all $v\in V\cap C$;  $LC=\K$. 
\end{definition}
 
\begin{definition} Let us consider two subsets $\mathcal{A}_1, \mathcal{A}_2$ of a topological space $\mathcal{X}$. If $y\in  \mathcal{A}_1\cap \mathcal{A}_2$, we say that $\mathcal{A}_1$ and $ \mathcal{A}_2$ are {\em  locally separated at $y$} provided there exists a neighborhood $V$ of $y$ such that
$
\mathcal{A}_1\cap\mathcal{A}_2\cap V = \{y\}.
$
\end{definition}

 The following open-mapping-based  result  characterizes  set-separation in terms of linear separation of  approximating cones (see e.g.  \cite{Sussmann1999geometry}). 
\begin{theorem}\label{ThmLocallySep}
Let $Z_1 $ and $Z_2 $ be subsets of $\cR^N$, $ z\in Z_1\cap Z_2 $
and let $\K_1$, $\K_2\subseteq \cR^N$ 
be   Boltyanski approximating cones  for  $Z_1 $ and $ Z_2 $, respectively, at $z$. If 
$\K_1$ or $\K_2$ is not a subspace and   $Z_1$, $Z_2$
are locally separated at $z$, then $\K_1$ and  $\K_2$ are {\em linearly separated}, namely  there exists a covector $\lambda \in\cR^N$ such that 
$
0\ne \lambda\in \K_1^\bot\cap(-\K^\bot_2).
$
\end{theorem}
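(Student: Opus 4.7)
The plan is to argue by contradiction: assuming $\K_1$ and $\K_2$ are not linearly separated, I will produce a sequence of points in $Z_1\cap Z_2$ converging to $z$ but all distinct from $z$, contradicting local separation. The first observation, purely convex-analytic, is that for two convex cones in $\cR^N$ the nonexistence of a nonzero covector $\lambda\in\K_1^\bot\cap(-\K_2^\bot)$ is equivalent, by the Hahn--Banach separation theorem, to the Minkowski identity $\K_1-\K_2=\cR^N$.

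Using Definition \ref{approximatingcone} for each $\K_i$, pick convex cones $C_i\subset\cR^{M_i}$, neighborhoods $V_i$ of the origin, continuous maps $F_i:V_i\cap C_i\to Z_i$ satisfying $F_i(0)=z$ and $F_i(v)=z+L_iv+o(|v|)$, and linear maps $L_i$ with $L_iC_i=\K_i$. Introduce the difference map
\[
\Phi:(V_1\cap C_1)\times(V_2\cap C_2)\to\cR^N,\qquad \Phi(v_1,v_2):=F_1(v_1)-F_2(v_2),
\]
whose linearization at the origin is $L(v_1,v_2):=L_1v_1-L_2v_2$ and which, by the preceding step, satisfies $L(C_1\times C_2)=\K_1-\K_2=\cR^N$.

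The core of the argument is a nonlinear open-mapping theorem for continuous perturbations of linear maps defined on convex cones: since $L$ sends $C_1\times C_2$ onto $\cR^N$, every neighborhood of $(0,0)$ in $C_1\times C_2$ contains a point $(v_1,v_2)\neq(0,0)$ with $\Phi(v_1,v_2)=0$. The standard method is to consider the rescaled maps $\Phi_\epsilon(v_1,v_2):=\epsilon^{-1}\Phi(\epsilon v_1,\epsilon v_2)$, which converge uniformly on the unit sphere of $C_1\times C_2$ to the surjective linear map $L$ as $\epsilon\to 0^+$; a Brouwer-degree/fixed-point argument then yields zeros of $\Phi_\epsilon$ on the sphere for all small $\epsilon>0$. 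The hypothesis that $\K_1$ or $\K_2$ is not a subspace enters precisely here: it guarantees that the topological degree of $L$ around $0$, computed on a suitable slice of $C_1\times C_2$, is nonzero and therefore inherited by the perturbed maps $\Phi_\epsilon$. Rescaling back, the resulting zeros produce points $y_k:=F_1(v_1^{(k)})=F_2(v_2^{(k)})\in Z_1\cap Z_2$ with $y_k\to z$; the non-subspace hypothesis together with the asymptotic expansion $F_i(v)=z+L_iv+o(|v|)$ ensures $y_k\neq z$ for all large $k$, contradicting local separation.

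The main obstacle is the cone-version of the open-mapping theorem, together with the extraction of \emph{nontrivial} zeros of the difference map via the non-subspace hypothesis: the ordinary surjectivity argument alone only yields the trivial solution $(0,0)$, and it is topological degree theory that upgrades it to a genuinely distinct point of $Z_1\cap Z_2$ near $z$. Once this cone-open-mapping lemma is in place, the rest is a formal combination of Hahn--Banach separation in $\cR^N$ and the first-order expansion of the $F_i$.
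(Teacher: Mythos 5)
A preliminary remark: the paper does not prove Theorem \ref{ThmLocallySep} at all; it quotes it as a known ``open-mapping-based'' result and cites the literature (Sussmann). So the comparison is with the standard proof your sketch is clearly aiming at. Your first step is correct: for convex cones, the absence of a nonzero $\lambda\in\K_1^\perp\cap(-\K_2^\perp)$ is indeed equivalent to $\K_1-\K_2=\cR^N$, and reducing the theorem to ``transversal cones $\Rightarrow$ the sets are not locally separated'' via the difference map $\Phi(v_1,v_2)=F_1(v_1)-F_2(v_2)$ is the standard architecture.

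The genuine gap is in the step you yourself flag as the main obstacle, and the mechanism you propose there does not work. You claim the non-subspace hypothesis enters by making ``the topological degree of $L$ around $0$ nonzero'' and that this forces zeros of $\Phi_\eps$ on the unit sphere, hence nontrivial zeros of $\Phi$. Neither assertion is right. Degree theory applied to $\Phi_\eps\to L$ on a bounded open set only certifies \emph{some} zero in the open set, and $(0,0)$ is already a zero, so a nonzero degree of $L$ gives you nothing new; moreover a nonzero degree has nothing to do with $\K_1$ or $\K_2$ failing to be a subspace. A concrete obstruction: take $N=2$, $Z_1=\{(x,x^2):x\in\cR\}$, $Z_2=\{(y^2,y):y\in\cR\}$, $z=0$, with the obvious parametrizations. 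The cones are the two coordinate axes, $L(v_1,v_2)=L_1v_1-L_2v_2$ is a linear bijection of $\cR^2$ (degree $\pm1$), $\Phi_\eps\to L$, yet $Z_1\cap Z_2$ near $0$ is $\{0\}$: the sets \emph{are} locally separated while the cones are transversal. Hence any argument that uses only surjectivity/nonzero degree of $L$ must fail to produce a point of $Z_1\cap Z_2$ other than $z$; the non-subspace hypothesis has to enter in a different way, and your final claim that the expansion $F_i(v)=z+L_iv+o(|v|)$ ensures $y_k\neq z$ is likewise unsupported.

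What is actually needed is the elementary convex lemma you have skipped: if $\K_1-\K_2=\cR^N$ and $\K_1\cap\K_2=\{0\}$, then both cones are subspaces (write $-x=k_1-k_2$ for $x\in\K_1$; then $k_1+x\in\K_1\cap\K_2=\{0\}$, so $-x=k_1\in\K_1$). Thus transversality plus ``one cone is not a subspace'' yields a nonzero $\bar v\in\K_1\cap\K_2$ (\emph{strong} transversality), and this vector is the engine of the proof: picking $w_i$ with $L_iw_i=\bar v$, one applies the Brouwer/degree argument not to $\Phi$ at the origin but to the shifted maps $(u_1,u_2)\mapsto F_1(tw_1+u_1)-F_2(tw_2+u_2)$ for small $t>0$ (after first cutting $C_1\times C_2$ down to finitely many directions so that the domain has dimension $N$ and the limit map is nonvanishing on the relevant boundary). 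The zeros so obtained give common points $y_t=z+t\bar v+o(t)\in Z_1\cap Z_2$, which are $\neq z$ precisely because $\bar v\neq0$. Your ``suitable slice'' and ``zeros on the sphere'' hide exactly this construction, so as written the proof does not close.
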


  \subsubsection{Lie brackets}\label{Ss1}
 Given a fixed sequence ${\bf X}=(X_1,X_2,\ldots)$ of distinct
objects called {\em variables}, 
we call {\it words} the finite ordered strings  consisting of
the variables $X_i$, the {\it left parenthesis } $[$ and  the {\it right parenthesis} $]$, and the comma. We shall use $W({\bf X})$ to denote the set of words. For instance, $X_2X_5X_4$ and $X_3,[X_{13}[,]]X_{61}[$ are words. 

\noindent Given any
word $W\in W({\bf X})$, we use ${\rm Seq}(W)$  to denote the 
word  obtained from $W$ by deleting all left and right brackets and all
commas. We call  {\it length}  of a word $W\in W({\bf X})$, and write ${\rm Lgth}(W)$,  the
cardinality of ${\rm Seq}(W)$. For instance, if $W=[[[X_4,X_6],X_7],[X_8,X_9]]$,
${\rm Seq}(W) = X_4X_6X_7X_8X_9$ and ${\rm Lgth}(W)= 5$.

 \begin{definition} We  call  {\em formal   bracket of length $1$}  any word of length 1 and  we will say that the
{\em  bracket of two members $W_1$, $W_2$} of $W({\bf X})$ is the word $[W_1,W_2]$.

\noindent  We call {\em formal iterated brackets}  --or, simply, {\em brackets}-- of ${\bf X}$ the elements
of  the smallest subset $IB({\bf X})\subseteq W({\bf X})$ such that:  $IB({\bf X})$ contains  the  brackets of length $1$;
if $W_1$, $W_2\in IB({\bf X})$, then $[W_1,W_2]\in IB({\bf X})$; for any $b\in  IB({\bf X})$,   ${\rm Seq}(b) =X_{\mu+1},\ldots,X_{\mu+m}$ 
for some $\mu\geq 0$ and $m>0$.
\end{definition}
Notice that   ${\rm Lgth}([b_1,b_2])={\rm Lgth}(b_1)+{\rm Lgth}(b_2)$, 
for every pair of brackets $b_1$, $b_2$.
 Let $b$ be a bracket of length $m>1$. Then there
exists a unique pair $(b_1,b_2)$ of brackets
such that $b=[b_1,b_2]$. The
pair $(b_1,b_2)$ is the {\em factorization} of $b$, and 
$b_1$, $b_2$ are known, respectively, as the {\em left factor} and the
{\em right factor} of $b$. Any substring of $b$
which is itself an iterated bracket is called a {\em subbracket} of
$b$.


\begin{definition} If   $b$ is a   bracket and $S$ is a subbracket of $b$, let us define  $\dime(S;b)$
 by a backward recursion on $S$:
$\dime(b;b)\colonequals0$,\,  
$\dime(S_1;b)\colonequals\dime(S_2;b)\colonequals1+\dime([S_1,S_2];b).$
We shall refer to $\dime(S;b)$ as the {\em number of differentiations of $S$ in $b$}.
\end{definition}
It is  easy to prove that
$
\dime(S;b)
$
is equal to  the number of right brackets that occur in $b$ to
the right of $S$ minus  the number of left brackets that
occur in $b$ to the right of $S$.
 For example, if
$b=b(X_3,X_4,X_5) :=\big[X_3\,,\,[X_4,X_5]\big]$,
 then $\dime([X_4,X_5];b)=1$, $\dime(X_3;b)= 1$,  $\dime(X_4;b)=2$, $\dime(X_5;b)=2$.
 
\begin{definition}[Classes $C^{b+k}$ and $C^{b+k-1,1}$]\label{asindef}
Let $b$ be a bracket of degree $m\geq 1$, with ${\rm Seq}(b) = X_{\mu+1}\ldots
X_{\mu+m}$,  $\mu\geq 0 $. Let    ${\bf f}=(f_1,\ldots,f_{\nu})$ be a finite sequence
 of vector fields, with $\nu\geq \mu+m$,  and let $k\ge 0$ be an integer. We say that
 ${\bf f}$ {\em is of class $C^{b+k}$} if   $f_j$ is of class
$C^{\dime(X_j;b)+k}$ for each $j\in\{\mu+1,\dots,\mu+m\}$.
\medbreak\noindent
\end{definition}
For example, if
$
b=\big[[X_3,X_4],[[X_5,X_6],X_7]\big]$ and $ {\bf
f}=(f_1,\dots,f_8)$ (so $m=5$, $\nu =8$, $\mu = 2$),
then ${\bf f}\in C^{b+3}$ if, and only if, $f_3,f_4,f_7\in C^{5}$ and $f_5,f_6\in C^{6}$.
It is  easy to verify the following result.
\begin{proposition}
Let $b$, $k$, and  ${\bf f}=(f_1,\ldots,f_\nu)$ be as in Def.
\ref{asindef}, and let $(b_1,b_2)$ be the factorization of $b$.
Then ${\bf f}\in C^{b+k}$ if, and only if,
${\bf f}\in C^{b_1+k+1}$ and ${\bf
f}\in C^{b_2+k+1}$.
\end{proposition}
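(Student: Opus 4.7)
The proposition is a purely combinatorial unwinding of the definition of $C^{b+k}$, and the only non-cosmetic ingredient is the identity
\[
\dime(X_j;b)\;=\;\dime(X_j;b_i)+1
\]
valid for every variable $X_j$ appearing in the factor $b_i$ of $b=[b_1,b_2]$, $i\in\{1,2\}$. My plan is to first establish this identity (and, more generally, for every subbracket $S$ of $b_i$), and then to read off both implications of the equivalence by splitting the index set $\{\mu+1,\dots,\mu+m\}$ according to whether $X_j$ lives in $b_1$ or in $b_2$.

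For the identity I would exploit the equivalent characterisation of $\dime(\cdot;b)$ already recorded in the excerpt: the number of right brackets occurring in $b$ to the right of $S$ minus the number of left brackets occurring to the right of $S$. Passing from $b_i$ to $b=[b_1,b_2]$ appends, to the right of every position inside $b_i$, either the string $,\,b_2\,]$ (if $i=1$) or the single symbol $]$ (if $i=2$). Since $b_2$ is itself a well-formed bracket word, it carries the same number of left and right parentheses and therefore contributes $0$ to the bracket-count difference; the surviving extra $]$ contributes exactly $+1$. This yields $\dime(S;b)=\dime(S;b_i)+1$ for every subbracket $S$ of $b_i$. A short inductive alternative works equally well: the base case $\dime(b_i;b)=1+\dime(b;b)=1=\dime(b_i;b_i)+1$ together with the recursion $\dime(S_j;\,\cdot\,)=1+\dime([S_1,S_2];\,\cdot\,)$ gives the claim by structural induction on the depth of $S$ inside $b_i$.

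With the identity in hand, the rest is bookkeeping. Writing $\mathrm{Seq}(b_1)=X_{\mu+1}\cdots X_{\mu+m_1}$ and $\mathrm{Seq}(b_2)=X_{\mu+m_1+1}\cdots X_{\mu+m}$, the condition $\mathbf{f}\in C^{b+k}$ unfolds to $f_j\in C^{\dime(X_j;b)+k}$ for every $j\in\{\mu+1,\dots,\mu+m\}$. Splitting this conjunction at $\mu+m_1$ and substituting the identity turns it into $f_j\in C^{\dime(X_j;b_1)+(k+1)}$ for $j\le\mu+m_1$, together with $f_j\in C^{\dime(X_j;b_2)+(k+1)}$ for $j>\mu+m_1$, which is precisely the pair of conditions $\mathbf{f}\in C^{b_1+k+1}$ and $\mathbf{f}\in C^{b_2+k+1}$. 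Both implications being transparent in this form, the equivalence follows. There is no real obstacle to overcome: the whole statement is an unwinding of definitions, and the only item requiring a moment's thought is the combinatorial identity above, which admits two parallel one-line proofs.
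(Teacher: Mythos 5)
Your proof is correct: the identity $\dime(X_j;b)=\dime(X_j;b_i)+1$ for variables $X_j$ occurring in the factor $b_i$ (proved either from the bracket-counting characterization or by the structural induction you sketch) is exactly the content of the statement, and splitting the index set then gives both implications. The paper offers no proof at all --- it labels the proposition as easy to verify --- and your definitional unwinding is precisely the intended argument, so there is nothing to add.
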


We are now ready to plug vector fields in place of indeterminates in a bracket.

\begin{definition} For integers $\mu \ge 0$, $m,\nu\geq 1$,  such that $\mu+m\leq \nu$, let  $b$ be a formal bracket such that ${\rm Seq}(b)= X_{\mu+1}\dots X_{\mu +m}$ and let ${\bf f}=(f_1,\ldots,f_\nu)$ be a $\nu$-tuple of continuous vector fields. Let $S$ be a subbracket of $b$.
  If ${\rm Lgth}(S)=1$, i.e.  $S=X_j$ for some $j=\mu+1,\dots, \mu+m$, we define  the vector field $S({\bf f})$
as
$S({\bf f}) \colonequals  X_j({\bf f}) \colonequals f_j$.
If  ${\rm Lgth}(S)>1$,   $S=[S_1,S_2]$, and either $S\neq b$
or,  when $S=b$, one assumes ${\bf f}\in C^b$, we set
 $
S({\bf f})\colonequals  [S_1({\bf f}),S_2({\bf f})]. $ We shall call $S({\bf f})$  the Lie bracket corresponding to the formal bracket $S$ and the sequence  ${\bf f}$ of vector fields. 
\end{definition}

We call  {\it switch-number}  of a formal  bracket  $b$ the number $r_{_b}$ defined
 recursively as:  ${r_{_b}} := 1,$ if   $b=X_j$ for some $j$;
 $ r_{_b}:= 2\big(r_{_{b_1}}+r_{_{b_2}}\big)$ if ${\rm Lgth}(b)\ge2$  and  $b=[b_1,b_2]$.  For instance, the switch-numbers of  $\big[[X_3,X_4],[[X_5,X_6],X_7]\big]$ and $[[X_5,X_6],X_7]$ are $28$ and $10$, respectively.
 We will call   {\em length} and  {\em switch-number of a Lie bracket} $B=b(f_{\mu+1},\ldots
f_{\mu+m})$ the  length and the switch-number  of the associated formal bracket $b$, respectively.


\section{The optimization  problems}\label{SecProblem}
In this section we introduce rigorously the optimization problem over $L^1$-controls  and its embedding in an impulsive problem.
\vsm
Throughout the paper we shall assume the following set of hypotheses:
\begin{itemize}
\item[{\bf (Hp)}]  {\it 
{\rm (i)} the target $\cS\subset \cR_+\times\cR^n$ is  a closed subset; the control set  $A\subset\cR^q$  is   compact;\footnote{Through minor changes one might generalize this hypothesis  with the fact that,  for every $(x,u)$, the function  $a\mapsto(f(x,a),l(x,u,a))$ is bounded.}   the {\rm unbounded control set} ${\mathcal C} \subseteq \cR^m$ is a  closed  cone of the form
$
 \mathcal{C}   = \mathcal{C} _1\times  \mathcal{C} _2,
$
where $m_1+m_2=m$,    $\mathcal{C} _1\subseteq\cR^{m_1}$ is a closed cone that  contains the lines $\{r{\bf e}_i:\ r\in\cR\}$, for $i=1,\ldots,m_1$,   and $\mathcal{C}_2\subset\cR^{m_2}$ is a  closed cone which does not contain straight lines;
 \footnote{Hypothesis (i) on $\mathcal{C}$ is by no means  restrictive, since  it can be recovered by replacing the single vector fields $g_i$ with suitable linear combinations of $\{g_1,\ldots,g_m\}$.}

{\rm (ii)}   the {\rm  drift dynamics} $ f\colon \cR^n\times  A\to\cR^{n}$ is continuous and
has continuous  partial derivatives $\displaystyle\frac{\partial f}{\partial x^1}, \dots, \frac{\partial f}{\partial x^n}$; 

{\rm (iii) } the vector fields  $g_1,\dots,g_m:  \cR^n\to\cR^{n}$  are  continuously differentiable;  
 
{\rm (iv)} the {\rm Lagrangian}  $\ell\colon \cR^n\times\C\times A \to \cR$ can be written 
  as \linebreak $\ell(x,u,a)=\ell_0(x,a)+\ell_1(x,u), $   where $\ell_0$
 and the recession function
  $$
 \hat\ell_1(x,w^0,w):= \lim_{r\to w^0 } r\ell_1\left(x,\frac{w}{r}\right),\quad\text{for all } (x,w^0,w) \in \cR^n\times \cR_+\times \C
  $$ 
  are continuous  with continuous  partial derivatives  with respect to $x.$

{\rm (v)} the {\rm  final cost} $\h:\cR\times\cR^n\to\cR$ is  continuously differentiable. 
}
 \end{itemize}
 \vsm
Clearly, by standard cut-off methods one might assume the differentiability hypotheses in (ii)-(v) only on a neighborhood of the extended optimal trajectory considered in Thms. \ref{PMPe}, \ref{PMPeho}.
 
\subsection{The original optimal control problem}
\vsm

We define the {\em set $\U$ of strict-sense controls} as
 $
\U:=
\bigcup_{T>0} \{T\}\times L^1\big([0,T] , \C\times A\big) . 
 $
 
\begin{definition}
 For any strict-sense control $(T,u,a)\in\U,$ we call $(T,u,a,x,\vz)$ a {\em strict-sense process} if  $(x,\vz)$ is  the (unique) Carath\'eodory solution to
 \bel{E2}
\left\{\begin{array}{l}
\ds \frac{dx}{dt}(t) = f(x(t), a(t)) + \ds\sum_{i=1}^m  g_i(x(t)) {u^i}(t),\\
\ds\frac{d\vz}{dt}(t) = |u(t)|,
\\ [1.5ex] 
(x,\vz)(0)=({\check x},0).\end{array}\right. \quad {\rm a.e. }\ t\in [0,T],
\eeq 
Furthermore, we say that   $(T,u,a,x,\vz)$ is {\em  feasible} if  $(T, x(T),\vz(T))\in \cS\times [0,K]$.  
 \end{definition}
 The original optimal control problem  is defined as
\begin{equation}
\label{P}\tag{P}
\left\{\begin{array}{l}
\ds\text{minimize} \ \h(T, x(T))+\int_0^T \ell(x(t),u(t),a(t))\,dt  \\
\text{over the set of feasible strict-sense processes $(T,u,a,x,\vz)$.}
\end{array}\right.
\end{equation}

\begin{definition}  We call a feasible  strict-sense process $(\bar T, \bar u, \bar a, \bar x,   \bar \va)$   a {\em local strict-sense  minimizer} of \eqref{P} if 
 there exists $\delta >0$ such that
\begin{equation}\label{min1}
\h(\bar T,\bar x(\bar T))+\int_0^{\bar T} \ell(\bar x(t),\bar u(t),\bar a(t))dt\,\leq\, \h(T, x(T)) +\int_0^T \ell(x(t),u(t),a(t))dt
\end{equation}
for every feasible strict-sense process $(T,u,a, x, \va)$ 
verifying
$
{\rm d}\Big((T,x,\va), (\bar T,\bar x,\bar \va) \Big)<\delta,
$
where ${\rm d}$ is the distance defined in \eqref{dinfty}.
 If  relation \eqref{min1} is satisfied for all feasible strict-sense processes,  
 we say that $(\bar T, \bar u, \bar a, \bar x,   \bar \va)$ is a {\em global strict-sense minimizer}.
\end{definition}

\begin{remark}\label{RemE}{\rm     
By adding   the trivial equations $\ds \frac{dx^0}{dt}(t) =1,$   $\ds \frac{d\hat x}{dt}(t) = u(t)$,  where $\hat x = (x^{n+1},\ldots, x^{n+m})$, we can allow $\ell$,  $f$, $g_i$, for $i=1,\dots,m,$ to depend on $t$ and on  the function $U(t):=\int_0^tu(\tau)\,d\tau,$ while $\h$   might depend on  $U$   as well. }
\end{remark}

\subsection{The  space-time optimal control problem}\label{Subs3.2}
We refer to the set \newline    $
 \mathcal{W}:=
\bigcup_{S>0} \{S\}\times \Big\{(w^0,w,\alpha)\in L^\infty([0,S], \cR_+\times \C\times A): \,  {\text{ess}\inf}  (w^0+|w|)>0\Big\} 
 $ as  the {\em set of space-time controls}.

\begin{definition}
 For any   $(S, w^0,w,\alpha)\in \mathcal{W},$  we  say that $(S,w^0,w,\alpha,y^0,y,\beta)$ is  a {\em space-time process} if  $(y^0,y,\beta)$ is  the unique Carath\'eodory solution of
 \begin{equation}
 \label{extended}
 \left\{
\begin{split}
\frac{d{y^0}}{ds} (s) &= w^0(s),  \\
\frac{dy}{ds} (s) & = f( y(s),\alpha(s))w^0(s)+ \sum_{i=1}^{m}g_{i}( y(s))w^i(s),\\
\frac{d\beta}{ds} (s) & = |w(s)|, \\
 (y^0,y , &\beta)(0)=(0,\xbo,0).
\end{split}
\right. \quad {\rm a.e.}\, s\in [0,S], \end{equation}
We say that $(S,w^0,w,\alpha,y^0,y,\beta)$ is {\em  feasible} if $(y^0(S),y(S),\beta(S))$ belongs to $\cS\times [0,K]$. 
 \end{definition}
We define the {\it extended or space-time problem} as
\begin{equation}
\label{Pe}\tag{P{\tiny s-t}}
\left\{\begin{array}{l}
\ds\text{minimize} \ \h(y^0(S), y(S) )  +\int_0^S \ell^e\big((y,w^0, w,\alpha)(s)\big)ds  \\
\text{over  feasible space-time processes 
$(S,w^0,w,\alpha,y^0,y,\beta)$,}
\end{array}\right.
\end{equation}
where the {\it extended Lagrangian} $ \ell^e$ is given by
$$
 \ell^e(x,w^0,w,a) := \ell_0(x,a)w^0 + \hat\ell_1(x,w^0,w), \quad \text{for } (x,w^0,w,a)\in\cR^n\times  \cR_+\times \C\times A.
$$
 \begin{definition}\label{emin} A feasible space-time process   $(\bar S, \bar w^0,\bar w,\bar\alpha, \bar y^0,\bar  y,\bar \beta)$   is said to be a  {\em local minimizer for the space-time problem \eqref{Pe}} if 
  there exists $\delta >0$ such that
\begin{equation}
\label{min}
\ds\h((\bar y^0,\bar y)(\bar S)) +\int_0^{\bar S} \ell^e((\bar y,\bar w^0,\bar  w,\bar \alpha)(s))\,ds \leq  
\ds  \h((y^0,y)(S)) +\int_0^S \ell^e((y,w^0, w,\alpha)(s))\,ds
\end{equation}
 for all feasible $(S,w^0,w,\alpha,y^0,y,\beta)$ satisfying
$
{\rm d}\Big((S,y^0, y ,\beta) , (\bar S,\bar y^0, \bar y ,\bar \beta)\Big)<\delta,
$
where ${\rm d}$ is as  in \eqref{dinfty}.  If \eqref{min} is satisfied for all feasible space-time processes, 
we call
 $(\bar S, \bar w^0,\bar w,\bar\alpha, \bar y^0,\bar  y,\beta)$  a {\em global space-time  minimizer}.  
\end{definition}
Observe that the space-time  system \eqref{extended} is  {\it rate-independent}. Precisely, given a strictly increasing, surjective,  and  bi-Lipschitzian function  $\sigma:[0, S]\to [0, \tilde S]$, \linebreak $(\tilde S,\tilde w^0,\tilde w,\tilde \alpha,\tilde y^0,\tilde y,\tilde \beta)$ is a space-time process  if,  and only if,    $(S,w^0,w,\alpha,y^0,y,\beta)$  given by
 $
\ds(w^0,w):=\Big((\tilde w^0,\tilde w)\circ\sigma\Big)\,\frac{d\sigma}{ds}$, $(\alpha,y^0,y,\beta):=\left(\tilde \alpha,\tilde y^0,\tilde y,\tilde \beta\right)\circ\sigma$ \footnote{Since  every $L^1$-equivalence class contains  Borel measurable representatives, we  tacitly assume that all $L^1$-maps we are considering are Borel measurable    when necessary.
}
is a space-time process of  \eqref{extended} (see \cite[Sect. 3]{MottaRampazzo1995}). In this case, $(\tilde S,\tilde w^0,\tilde w,\tilde \alpha,\tilde y^0,\tilde y,\tilde \beta)$ is feasible if, and only if, $(S, w^0,w,\alpha, y^0,y,\beta)$ is feasible, and the associated costs coincide.
Let us call {\it equivalent}  any two space-time processes $(\tilde S,\tilde w^0,\tilde w,\tilde \alpha,\tilde y^0,\tilde y,\tilde \beta)$,  $(S,w^0,w,\alpha,y^0,y,\beta)$ as above. The following result is quite  straightforward:
\begin{lemma}\label{equiv}
A feasible space-time process   $(\bar S, \bar w^0,\bar w,\bar\alpha, \bar y^0,\bar  y,\bar \beta)$   is  a local 
  (resp., a global) minimizer for the space-time problem \eqref{Pe} if, and only if, every equivalent space-time process is  a local  (resp., a global) minimizer, and   the costs coincide.
\end{lemma}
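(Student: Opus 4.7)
The plan is to isolate two invariants---feasibility and cost---under the bi-Lipschitz reparameterization linking equivalent space-time processes, and then to deduce both the global and local statements from them. Feasibility is clear since $\sigma(0)=0$, $\sigma(S)=\tilde S$ and $(\alpha,y^0,y,\beta)=(\tilde\alpha,\tilde y^0,\tilde y,\tilde\beta)\circ\sigma$ force $(y^0,y,\beta)(S)=(\tilde y^0,\tilde y,\tilde\beta)(\tilde S)$, preserving the terminal constraint $\cS\times[0,K]$. For cost equality, the terminal part $\h$ depends only on this common endpoint; the running part relies on the positive $1$-homogeneity of $\ell^e(x,\cdot,\cdot,a)$ in $(w^0,w)$, since $\ell_0(x,a)w^0$ is linear and $\hat\ell_1$ inherits $1$-homogeneity directly from its defining limit. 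A change of variable $\tilde s=\sigma(s)$, combined with $(\tilde w^0,\tilde w)\circ\sigma=(w^0,w)/\sigma'$, would then give
\begin{equation*}
\int_0^{\tilde S}\ell^e\big((\tilde y,\tilde w^0,\tilde w,\tilde\alpha)(\tilde s)\big)\,{\rm d}\tilde s=\int_0^S\ell^e\big((y,w^0,w,\alpha)(s)\big)\,{\rm d}s.
\end{equation*}

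The global case would then follow immediately: any feasible competitor has cost at least the minimum, and since both cost and feasibility are equivalence-invariant, the minimum transfers to every representative. For the local case, by the symmetry $\bar\sigma\leftrightarrow\bar\sigma^{-1}$, it would suffice to prove one implication. Assume $(\bar S,\bar w^0,\ldots)$ is local with constant $\delta$, and let $\bar\sigma:[0,\bar S]\to[0,\tilde{\bar S}]$ produce the equivalent $(\tilde{\bar S},\tilde{\bar w}^0,\ldots)$. Given any feasible $(S,w^0,w,\alpha,y^0,y,\beta)$ with ${\rm d}\big((S,y^0,y,\beta),(\tilde{\bar S},\tilde{\bar y}^0,\tilde{\bar y},\tilde{\bar\beta})\big)<\tilde\delta$, my plan is to reparameterize it onto $[0,\bar S]$ via the strictly increasing bi-Lipschitz bijection
\begin{equation*}
\hat\sigma(s):=\tfrac{S}{\tilde{\bar S}}\,\bar\sigma(s), \qquad s\in[0,\bar S],
\end{equation*}
so that $\hat\sigma(0)=0$ and $\hat\sigma(\bar S)=S$. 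This would yield an equivalent feasible space-time process on $[0,\bar S]$ with the same cost, and it would then suffice to show it lies in the ${\rm d}$-neighborhood of radius $\delta$ around $(\bar S,\bar y^0,\bar y,\bar\beta)$ for $\tilde\delta$ small enough; the $\delta$-local minimality would then deliver the required cost bound.

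The hard part, though not severe, will be this last metric comparison. For $s\in[0,\bar S]$, I would split
\begin{equation*}
|\hat y^0(s)-\bar y^0(s)|=|y^0(\hat\sigma(s))-\tilde{\bar y}^0(\bar\sigma(s))|\leq|y^0(\hat\sigma(s))-\tilde{\bar y}^0(\hat\sigma(s))|+|\tilde{\bar y}^0(\hat\sigma(s))-\tilde{\bar y}^0(\bar\sigma(s))|,
\end{equation*}
bounding the first summand by $\tilde\delta$ from the distance hypothesis (noting $\hat\sigma(s)\leq S$), and the second by the fixed modulus of uniform continuity of $\tilde{\bar y}^0$ on $[0,\tilde{\bar S}]$ applied to $|\hat\sigma(s)-\bar\sigma(s)|\leq|S-\tilde{\bar S}|<\tilde\delta$. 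Analogous estimates for $\hat y,\hat\beta$, together with the constant-extension convention handling $s>\bar S$, would show that the total ${\rm d}$-distance tends to $0$ as $\tilde\delta\to 0$, so $\tilde\delta$ can be chosen to close the argument. What keeps this step routine rather than delicate is that the reference trajectory is fixed and hence uniformly continuous, supplying the needed modulus without any a~priori control on the competitor's unbounded $L^\infty$ controls.
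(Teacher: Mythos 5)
Your argument is correct, and it supplies exactly the details the paper leaves out: the paper states Lemma \ref{equiv} without proof, calling it "quite straightforward," after having already recorded the key invariances (reparameterized processes are processes, feasibility is preserved, costs coincide by positive $1$-homogeneity of $\ell^e$ in $(w^0,w)$). Your treatment of the only nontrivial point — transferring \emph{local} minimality by rescaling the competitor onto $[0,\bar S]$ via $\hat\sigma=(S/\tilde{\bar S})\bar\sigma$ and comparing in the distance \eqref{dinfty} through the uniform continuity of the fixed reference trajectory and the constant-extension convention — is sound and is the intended argument.
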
  
As a consequence, the extended problem can be regarded  as a  problem on the quotient space. Therefore,  without loss of generality, one can replace a  minimizer  with its {\it canonical parameterization,} defined as follows: 
 
\begin{definition} We say that $(S_c,w_c^0,w_c,\alpha_c, y_c^0,y_c,\beta_c )$  is the  {\rm canonical parameterization} of a  space-time process  $(S,w^0,w,\alpha, y^0,y,\beta)$ if $$(w^{0}_c,w_c):=\Big((w^0,w)\circ \sigma^{-1}\Big)\,\frac{d\sigma^{-1}}{ds},  \quad (\alpha_c, y_c^0,y_c,\beta_c):=(\alpha, y^0,y,\beta)\circ \sigma^{-1},$$ where
$
   \sigma(s):=\int_0^s\left(w^0 (r)+\left|w(r)\right|\right)dr$, $s\in[0,S]$, $S_c:= \sigma(S)=y^0(S)+\beta(S).
$
\end{definition}
Note that  $w^{0}_c   (s)+\left|w_c (s)\right|=1$ for a.e. $s\in[0,S_c]$. We introduce the subset of  {\it canonical  space-time controls}
$$
  \mathcal{W}_c:=\left\{(S,w^0,w,\alpha)\in  \mathcal{W}: \,  w^0(s)+|w(s)|=1\quad \text{a.e. $s\in[0,S]$}\right\}, 
$$
and call   {\it canonical} also   the corresponding space-time processes. One can  easily verify that  a canonical space-time process coincides with its canonical parameterization.

\subsection{The space-time embedding}
The original control system \eqref{E2} can be embedded  into the space-time system \eqref{extended}. Precisely,  by the chain rule, given a strict-sense process $(T,u,a,x,\va)$, by setting 
\bel{seq0}
\ds\sigma(t):=\int_0^t (1+|u(\tau)|)\,d\tau,\quad  S:=\sigma(T),\quad y^0:=\sigma^{-1}:[0,S]\to[0,T],
\ee
 one obtains that 
\bel{seq}
(S,w^0,w,\alpha,y^0,y,\beta):=\left(S,  \frac{dy^{0}}{ds}, (u\circ y^{0})\cdot \frac{dy^{0}}{ds},  a\circ y^{0}, y^{0},x\circ y^{0}, \va\circ y^{0}\right) 
\eeq
  is a (canonical) space-time process   with $w^0>0$ a.e..
 Conversely, given a space-time process  $(S,w^0,w,\alpha,y^0,y,\beta)$ with $w^0>0$  a.e.,
   the increasing surjective  function $y^0:[0,S]\to[0,T]$, has an absolutely continuous
inverse $\sigma:[0,T]\to [0,S]$ (see e.g. \cite{Fed69}),  and 
 $(T,u,a,x,\vz):=\left(T, (w\circ\sigma)\frac{d\sigma}{dt}, \alpha\circ\sigma, y\circ\sigma,\beta\circ\sigma\right)$  is  a  strict-sense process. Hence, the family of strict-sense processes  can be identified with the subfamily of space-time  processes $(S,  w^0,w,\alpha, y^0,y,\beta)$ having $w^0>0$ a.e..
  
The impulsive, space-time extension of the original optimal control problem consists in  allowing  the  control $w^0$  to vanish in a set of positive measure. The $s$-intervals where $w^0$ vanishes represent the `impulses', namely the $s$-intervals of instantaneous evolution of both the control  and the state (see  e.g. \cite{BressanRampazzo1988, MottaRampazzo1995}).\footnote{Let us point out that one can equivalently give a $t$-based description of this extension using bounded variation trajectories as in   
\cite{AronnaRampazzo2015a,MottaSartori2018,AruKarPer10}.} 

 
 The notions of strict-sense and space-time  local minimizer are consistent, as stated in the following  easy consequence of Lemma \ref{equiv} above and  \cite[Prop. 2.7]{AronnaMottaRampazzo2019}:  
\begin{lemma}
\label{Min=} 
A  process  $(\bar T, \ub,\ab,\xb,\bar\va)$   is a  {\em strict-sense local minimizer}  for problem \eqref{P} if, and only if, $(\bar S,\bar w^0,\bar w,\bar \alpha,\yb^0,\yb,\bar\beta)$ defined as in \eqref{seq0}-\eqref{seq}  is a space-time local minimizer for \eqref{Pe}  among  feasible space-time processes with $w^0>0$ a.e..
\end{lemma}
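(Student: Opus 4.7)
\emph{Proof plan.} The argument rests on the bijection given by \eqref{seq0}--\eqref{seq} between strict-sense processes and \emph{canonical} space-time processes with $w^0>0$ a.e., together with Lemma \ref{equiv}. I would organize the proof in three steps.

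\textbf{Step 1: Cost and feasibility are preserved.} I would first verify, by the change of variables $t=\bar y^0(s)$ (equivalently $s=\bar\sigma(t)$), that the strict-sense cost of $(\bar T,\bar u,\bar a,\bar x,\bar\va)$ coincides with the space-time cost of its canonical counterpart $(\bar S,\bar w^0,\bar w,\bar\alpha,\bar y^0,\bar y,\bar\beta)$. The only nontrivial point is the treatment of the impulsive part of the Lagrangian: on the set $\{\bar w^0>0\}$ one has $\bar u=\bar w/\bar w^0$, and the definition of the recession function gives $\hat\ell_1(\bar y,\bar w^0,\bar w)=\bar w^0\,\ell_1(\bar y,\bar w/\bar w^0)$, so that $\ell^e(\bar y,\bar w^0,\bar w,\bar\alpha)\,ds = \ell(\bar x,\bar u,\bar a)\,dt$. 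Feasibility is transferred because $\bar T=\bar y^0(\bar S)$, $\bar x(\bar T)=\bar y(\bar S)$, and $\bar\va(\bar T)=\bar\beta(\bar S)$. The same identities hold for any compatible pair of processes.

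\textbf{Step 2: Bi-continuity with respect to $\d$.} The forward estimate is simple: from $S=T+\va(T)$ and the non-decreasing character of $\va$ one obtains $|S-\bar S|\le |T-\bar T|+\|\tilde\va-\tilde{\bar\va}\|_\infty$. Since $\sigma'(t)=1+|u(t)|\ge 1$, the inverse $y^0=\sigma^{-1}$ is Lipschitz with constant $1$ on its whole domain, uniformly in the family of strict-sense processes; a standard Arzelà-type argument then yields $\|\tilde y^0 -\tilde{\bar y^0}\|_\infty\to 0$, and writing $y=x\circ y^0$, $\beta=\va\circ y^0$ (with constant extensions beyond the respective endpoints) transfers the uniform convergence to $y$ and $\beta$. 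For the reverse direction one considers canonical space-time processes with $w^0>0$ a.e., inverts the map and uses $T=y^0(S)$, $x=y\circ\sigma$, $\va=\beta\circ\sigma$, with $\sigma=(y^0)^{-1}$, and derives analogous uniform estimates.

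\textbf{Step 3: Transfer of local minimality.} Assume $(\bar T,\bar u,\bar a,\bar x,\bar\va)$ is a strict-sense local minimizer with radius $\delta>0$. Any feasible space-time process with $w^0>0$ a.e. sufficiently close (in $\d$) to the canonical $(\bar S,\bar w^0,\bar w,\bar\alpha,\bar y^0,\bar y,\bar\beta)$ can, by Step 2 applied in reverse, be reparameterized into a feasible strict-sense process within the strict-sense $\delta$-ball; by Step 1, the cost comparison on the strict-sense side yields the cost comparison on the space-time side. This gives one implication. For the converse, the same scheme using the forward estimates of Step 2 shows that if the canonical space-time process is a local minimizer in the subfamily $\{w^0>0\text{ a.e.}\}$, then the associated strict-sense process is a strict-sense local minimizer. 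Lemma \ref{equiv} is used implicitly to pass from a generic local minimizer to its canonical representative when needed.

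\textbf{Main obstacle.} The delicate step is Step 2 in the direction from space-time to strict-sense: while $\bar y^0$ is automatically Lipschitz, its inverse $\bar\sigma$ need not be when $\bar w^0$ comes close to $0$, so the Lipschitz constants are not uniform in the family. The point is that one only needs uniform convergence of $x=y\circ\sigma$ and $\va=\beta\circ\sigma$, and this can be obtained from the uniform convergence of $y$, $\beta$ (which are equicontinuous because they are trajectories of a bounded-rhs system with $w^0+|w|\equiv 1$) combined with pointwise convergence of $\sigma_k$ to $\bar\sigma$. Assembling this carefully — essentially the content of \cite[Prop. 2.7]{AronnaMottaRampazzo2019} — is where the actual technical work lies.
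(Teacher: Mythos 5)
Your plan is, in substance, the route the paper takes: the paper disposes of this lemma by combining Lemma \ref{equiv} with the reparameterization estimates of \cite[Prop.~2.7]{AronnaMottaRampazzo2019}, i.e.\ exactly your Steps 1--3 (invariance of cost and feasibility under $t=y^0(s)$, two-sided comparison of the distances \eqref{dinfty} on the $(T,x,\va)$ and $(S,y^0,y,\beta)$ sides, transfer of local minimality).

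One point as written would not quite close, though it is repairable. In the direction ``strict-sense minimizer $\Rightarrow$ space-time minimizer among processes with $w^0>0$ a.e.'', Step 3 must handle \emph{every} feasible competitor with $w^0>0$ a.e.\ that is $\d$-close to the canonical reference, whereas your Step 2 (reverse direction) and the equicontinuity argument in the ``Main obstacle'' paragraph are formulated only for canonical competitors: the bound on the right-hand side, hence the equicontinuity of $y,\beta$, uses $w^0+|w|\equiv 1$ and fails for general competitors, and Lemma \ref{equiv} cannot be invoked to canonicalize a competitor, since reparameterization preserves cost and feasibility but not its $\d$-distance to the reference. The fix is that no uniformity over the family (and no Arzel\`a-type compactness) is needed: all estimates can be run against the fixed reference alone. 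For any competitor with $w^0>0$ a.e., setting $\sigma=(y^0)^{-1}$, $\bar\sigma=(\bar y^0)^{-1}$ and $s=\sigma(t)$, one has $|\sigma(t)-\bar\sigma(t)|=|\bar\sigma(\bar y^0(s))-\bar\sigma(y^0(s))|\le\omega_{\bar\sigma}\big(\|\tilde y^0-\tilde{\bar y}^0\|_\infty\big)$, where $\omega_{\bar\sigma}$ is the modulus of continuity of $\bar\sigma$ (which exists because $\bar y^0$ is continuous and strictly increasing on a compact interval), and then $|x(t)-\bar x(t)|\le\|\tilde y-\tilde{\bar y}\|_\infty+L_{\bar y}\,|\sigma(t)-\bar\sigma(t)|$, with $L_{\bar y}$ the Lipschitz constant of the canonical reference $\bar y$ (similarly for $\va$, $T$). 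In the forward direction, likewise, no compactness argument is needed: since $\bar y^0$ is $1$-Lipschitz and $\sigma-\bar\sigma=\va-\bar\va$, one gets directly $\|\tilde y^0-\tilde{\bar y}^0\|_\infty\le|T-\bar T|+\|\tilde\va-\tilde{\bar\va}\|_\infty$, and one concludes by composing with the moduli of continuity of $\bar x$ and $\bar\va$. With these adjustments your three steps reproduce the argument behind the paper's citation.
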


\section{A First Order Maximum Principle}\label{SNC}
Due to the rate-independence of the space-time control system discussed in Subsection \ref{Subs3.2}, we can always assume that a local minimizer $(\bar S, \bar w^0,\bar w,\bar\alpha, \bar y^0,\bar  y,\bar\beta)$  for   \eqref{Pe} is canonical. 
 
Let us  set 
\bel{DS+}
W:=\{(w^0,w)\in\cR_+ \times \C: \,  w^0+|w|=1\} .
\eeq
Let us   consider the  {\it unmaximized Hamiltonian} $H: \cR^{n+1+n+1+1}\times\cR_+\times\C\times A \to \cR$  and the  {\em Hamiltonian} ${\bf H}: \cR^{n+1+x+1+1} \to \cR$, defined by setting  
$$
H(x,p_0,p,\pi,\lambda,w^0,w,a):= p_0w^0 + p\cdot\Big(f(x,a) w^0 +  \sum_{i=1}^{m}  g_{i}(x) w^i\Big) + \pi | w| - \lambda\ell^e(x,w^0,w,a),
$$
$${\bf H}(x,p_0,p,\pi,\lambda
) := \ds\max_{(w^0,w,a)\in W\times A}  H(x,p_0,p,\pi,\lambda,w^0,w,a).
$$ 
{\begin{theorem}[\bf First Order Maximum Principle]
\label{PMPe} Let $(\bar S,\bar w^0,\bar w,\bar\alpha,  \bar y^0,\bar  y,\bar\beta)$  be a  canonical  local minimizer
for  the space-time problem \eqref{Pe}. Then, for every \linebreak Boltyanski  approximating cone   $\Gamma$    of the target $\cS$ at $(\bar y^0,\bar y)(\bar S)$, there exists  a multiplier   $(p_0, p , \pi,\lambda)\in \cR\times AC\left([0,\bar S],\cR^{ n}\right)\times \cR_-
\times \cR_+$ verifying: 
\begin{itemize}
\item[(i)]
 {\sc (non-triviality)} 
\begin{equation} 
\label{fe1}
(p_0, p , \lambda) \not= (0, 0,0) \,.
\end{equation}
Furthermore, if 
   $ \bar y^0(\bar S)>0$, then \eqref{fe1} can be strengthened  to 
\begin{equation}\label{strongfe1}
  (p  , \lambda) \not= (0,0). 
  \end{equation}
  \item[(ii)] {\sc (non-tranversality)}  
 \begin{equation}
 \label{fe4}
 (p_0,p(\bar S),\pi) \in \left[-\lambda\left(\frac{\partial\Psi}{\partial t}\big( (\bar y^0, \bar y)(\bar S)\big)\,,\,\frac{\partial\Psi}{\partial x}\big( (\bar y^0, \bar y)(\bar S)\big) \right)- \Gamma^\perp\right]\times J,
 \end{equation}
  where $J:=\{0\}$ if $\bar\beta(\bar S) < K $, and $J:=(0,+\infty)$ 
 if $\bar\beta(\bar S) = K$.\footnote{It is tacitly meant that, as an approximating cone to the $(T,x,\vz)$-target $\cS\times [0,K]$ at $(\bar y^0,\bar y,\bar\beta)(\bar S)$, one chooses  $\Gamma\times \cR$   if $\bar\beta(\bar S)<K$ and $\Gamma\times (-\infty,0]$   if $\bar\beta(\bar S)=K$. In particular, 
$(\Gamma\times \cR)^\perp = \Gamma^\perp\times\{0\}$  if $\bar\beta(\bar S)<K$ and $(\Gamma\times (-\infty,0])^\perp = \Gamma^\perp\times\cR_+$ when $\bar\beta(\bar S)=K$.}  In particular,   
 \bel{piestzero}\pi = 0 \quad\hbox{provided}\quad \bar\beta(\bar S)< K. \eeq 
\item[(iii)] {\sc (adjoint equation)}  The path $ p $ solves, for a.e. $s\in [0,\bar S]$, 
\begin{equation}
\label{fe2}
\displaystyle  \frac{dp}{ds} (s)\,=\,-\frac{\partial H}{\partial x}\left(\bar y(s),p(s),\pi,\lambda,\bar w^0(s),\bar w(s),\bar \alpha(s)\right).
\end{equation} 
\item[(iv)] {\sc (First order maximization)} For a.e. $s\in [0,\bar S]$, 
\begin{equation}\label{fe3}
\begin{array}{l}
H\Big(\bar y(s), p_0 , p(s),\pi, \lambda,\bar w^0(s),\bar w(s),\bar\alpha(s)\Big)= 
 {\bf H}\Big(\bar y(s), p_0 , p(s),\pi, \lambda\Big).
\end{array}
\end{equation}
\item[(v)] {\sc (Vanishing of the Hamiltonian)} 
\bel{engine}
 {\bf H}\Big(\bar y(s), p_0 , p(s),\pi, \lambda\Big)=0, \quad \text{for all }s\in[0,\bar S].
\eeq
\end{itemize}
\end{theorem}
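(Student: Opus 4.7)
The plan is to treat the extended problem \eqref{Pe} as a classical free-end-time optimal control problem whose controls $(w^0,w,\alpha)$ take values in the compact set $W\times A$, and to derive the conditions via a standard needle-variation argument combined with Boltyanski's separation theorem (Theorem \ref{ThmLocallySep}). As a first step, I would augment the state by $\zeta(s):=\int_0^s\ell^e(y,w^0,w,\alpha)\,dr$, so that the total cost becomes $\h(y^0(S),y(S))+\zeta(S)$, and I would encode the local minimality of $(\bar S,\bar w^0,\bar w,\bar\alpha,\bar y^0,\bar y,\bar\beta)$ as a local separation at $\bar{\tilde y}(\bar S):=(\bar y^0(\bar S),\bar y(\bar S),\bar\beta(\bar S),\bar\zeta(\bar S))$ between the reachable set $\mathcal R$ of the augmented system and the sublevel target
\[
\widetilde\cS:=\big\{(y^0,y,\beta,\zeta)\,:\,(y^0,y)\in\cS,\;\beta\in[0,K],\;\h(y^0,y)+\zeta\le\bar J\big\},
\]
where $\bar J:=\h(\bar y^0(\bar S),\bar y(\bar S))+\bar\zeta(\bar S)$. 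A Boltyanski approximating cone $\K_2$ of $\widetilde\cS$ at $\bar{\tilde y}(\bar S)$ is obtained from $\Gamma$, the half-line carried by the $\beta\le K$ constraint (which collapses to a line when $\bar\beta(\bar S)<K$), and the linearization of $\{\h(y^0,y)+\zeta\le\bar J\}$; dualization of this cone will yield the sign $\pi\in J$ and $\lambda\ge0$.

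Next, I would construct a Boltyanski approximating cone $\K_1$ for $\mathcal R$ at $\bar{\tilde y}(\bar S)$ by packaging finitely many needle variations of $(\bar w^0,\bar w,\bar\alpha)$ at Lebesgue points of the optimal control together with end-``time'' variations obtained by shortening or lengthening $[0,\bar S]$. Because the extended dynamics is rate-independent, extending the final portion with any admissible constant control $(w^0,w,a)\in W\times A$ contributes the direction vector
\[
\big(w^0,\,f(\bar y(\bar S),a)w^0+\textstyle\sum_i g_i(\bar y(\bar S))w^i,\,|w|,\,\ell^e(\bar y(\bar S),w^0,w,a)\big)
\]
to $\K_1$, while shortening contributes the opposite of the analogous vector built from the optimal controls at $s=\bar S^-$. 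The convex cone generated by these perturbation vectors, transported back via the variational equation of \eqref{extended} augmented by its $\zeta$-row, is an approximating cone by the standard Boltyanski construction.

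Local separation and Theorem \ref{ThmLocallySep} then produce a nonzero separator $(q_0,q,\rho,-\lambda)\in\K_1^\perp\cap(-\K_2^\perp)$. Letting $p\in AC([0,\bar S],\cR^n)$ solve the linear adjoint equation (iii) with $p(\bar S)=q$, and setting $p_0:=q_0$, $\pi:=\rho$, the needle-variation formula converts $\K_1^\perp$-membership into the pointwise maximization (iv); the $(-\K_2^\perp)$-membership yields (ii) together with $\lambda\ge 0$ and the sign of $\pi$, including $\pi=0$ when $\bar\beta(\bar S)<K$. The end-``time'' directions applied at an arbitrary Lebesgue point $s\in[0,\bar S]$ yield $\mathbf H(\bar y(s),p_0,p(s),\pi,\lambda)=0$, and this identity extends to all of $[0,\bar S]$ by continuity of $s\mapsto\mathbf H(\bar y(s),p_0,p(s),\pi,\lambda)$ along the autonomous adjoint flow, proving (v).

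Finally, I would verify the non-triviality refinements. The basic $(p_0,p,\lambda)\ne 0$ is immediate from the fact that $\K_1$ is a proper subset of $\cR^{n+3}$ combined with the nonzero output of the separation theorem. For the strengthened version when $\bar y^0(\bar S)>0$, I would argue by contradiction: if $p\equiv0$ and $\lambda=0$, then $H=p_0 w^0$, and (iv)--(v) together force $p_0\bar w^0(s)=\max_{(w^0,w)\in W}p_0 w^0=p_0^+$ a.e.; since $\bar y^0(\bar S)=\int_0^{\bar S}\bar w^0\,ds>0$, the set $\{s:\bar w^0(s)>0\}$ has positive measure, and the pointwise identity then forces $p_0=0$, contradicting the basic non-triviality. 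I expect the main technical obstacle to be the clean construction of an approximating cone for $\mathcal R$ that simultaneously accommodates the free terminal arc length, the fact that $\ell^e$ is merely continuous in $(w^0,w)$ (differentiability being required only in $x$), and the careful bookkeeping needed to orient the sign conventions so that the dual separator has the correct signature in $\pi$ and $\lambda$.
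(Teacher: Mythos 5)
Your variational scheme (needle variations plus insertion/deletion of small time intervals, reachable set versus a ``profitable'' target, Boltyanski cones and Theorem \ref{ThmLocallySep}) is sound in outline and is essentially the machinery the paper itself deploys in Section \ref{SecProof} for the higher-order theorem; the paper's own proof of Theorem \ref{PMPe} instead simply invokes the classical Boltyanski-cone Maximum Principle and concentrates on the one point that does \emph{not} follow from it, namely the non-triviality statements \eqref{fe1} and \eqref{strongfe1}. This is exactly where your proposal has a genuine gap. The separation theorem only gives that the full four-tuple $(p_0,p(\bar S),\pi,\lambda)$ is nonzero; it does not exclude the degenerate case $p_0=0$, $p\equiv 0$, $\lambda=0$, $\pi\ne 0$, which is possible a priori when $\bar\beta(\bar S)=K$ (when $\bar\beta(\bar S)<K$ the transversality condition \eqref{fe4} already forces $\pi=0$). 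So the claim that ``$(p_0,p,\lambda)\ne 0$ is immediate from the nonzero output of the separation theorem'' is false as stated, and it is precisely the step the paper attributes to the elementary arguments of \cite{MottaRampazzoVinter2019}. The repair is short but must be given: if $(p_0,p,\lambda)=0$ and $\pi\ne0$, then $H=\pi|w|$ with $\pi\le0$, so \eqref{engine} gives ${\bf H}=0$ and \eqref{fe3} gives $\pi|\bar w(s)|=0$, hence $\bar w=0$ a.e.\ and $\bar\beta(\bar S)=0<K$, contradicting the fact that $\pi\ne0$ is allowed only when $\bar\beta(\bar S)=K$. The same omission affects your proof of \eqref{strongfe1}: when $\bar\beta(\bar S)=K$ you may not drop the term $\pi|w|$ and write $H=p_0w^0$; you must first deduce from \eqref{engine} that $p_0\le0$ and then observe that in \eqref{fe3} the two nonpositive terms $p_0\bar w^0(s)$ and $\pi|\bar w(s)|$ must vanish separately, after which $\bar y^0(\bar S)=\int_0^{\bar S}\bar w^0\,ds>0$ yields $p_0=0$ as you intended.

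Two further points need attention if the reconstruction is to stand on its own. First, your target $\widetilde\cS$ uses the non-strict sublevel $\h(y^0,y)+\zeta\le\bar J$: local optimality does \emph{not} imply local separation from the reachable set for this set, since nearby feasible processes with cost exactly $\bar J$ would lie in both sets; one must use the strict sublevel set, adjoined with the reference endpoint, exactly as in the paper's profitable set $\mathscr{P}$ (and as in Lemma \ref{separazione}, where $\bar\beta(\bar S)<K$ versus $=K$ also has to be handled when projecting out the $\beta$-component). Second, each finite family of variation times and values produces, via Theorem \ref{ThmLocallySep}, its own separating multiplier; to obtain a single $(p_0,p,\pi,\lambda)$ satisfying \eqref{fe3}--\eqref{engine} for a.e.\ $s\in[0,\bar S]$ and all $(w^0,w,a)\in W\times A$ one needs the standard limiting step (Lebesgue/density points plus the finite-intersection compactness argument of Subsection \ref{ParConclusion}), which your sketch does not mention. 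With these repairs your route is a legitimate self-contained alternative to the paper's citation-based proof, closely parallel to the argument the paper gives for Theorem \ref{PMPeho} with the bracket-like variations removed.
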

 
\begin{proof} The  Pontryagin Maximum Principle based on Boltyanski approximating cones (see 
 e.g.  \cite{Sussmann1999geometry, SchaettlerLedzewicz2012})
 yields  the existence of a  multiplier $(p_0, p , \pi,\lambda)\in \cR\times AC\left([0,\bar S],\cR^{ n}\right)\times  \cR_-
\times \cR_+$ verifying the non-transversality condition \eqref{fe4}, the adjoint equation \eqref{fe2}, the  maximum relation \eqref{fe3}, the conservation \eqref{engine}, and the non-triviality condition $
(p_0, p , \pi,\lambda)\neq 0.
$
So, it remains to prove  the strengthened non-triviality condition \eqref{fe1}. This can be done by using  the same elementary  arguments as in the proof of \cite[Theorem 3.1]{MottaRampazzoVinter2019}.\footnote{The fact that 
in  \cite{MottaRampazzoVinter2019} 
one  makes  use of the limiting normal cone instead of the polar of the Boltyanski  cone  plays no role in the proof of this result.}
\end{proof}

\begin{definition}\label{abnormal}
A  process  $(\bar S, \bar w^0,\bar w,\bar\alpha, \bar y^0,\bar  y,\bar\beta)$ is called  an {\rm extremal} if it obeys the conditions in Theorem \ref{PMPe} 
  for some   multiplier  $(p_0, p, \pi,\lambda)$. If there is a choice of the multiplier  with $\lambda=0,$ then the extremal $(\bar S, \bar y^0,\bar  y,\bar\beta,\bar w^0,\bar w,\bar\alpha)$ is called {\rm abnormal}, otherwise it is called  {\rm normal}. Finally, the extremal is said to be {\rm strictly abnormal} if every choice of the multiplier $(p_0, p , \pi,\lambda)$ verifies $\lambda=0$. 
\end{definition}
 
When $\ell_1(x,\cdot)$ is positively  $1$-homogeneous, so that  for any $(x,w^0,w,a)\in\cR^n\times\cR_+\times\C\times A$ one has
 $\ell^e(x,w^0,w,a) = \ell_0(x,a)w^0 + \ell_1(x,w)$,  
let us  define the  {\it drift Hamiltonian} ${\bf H}^{\rm (dr)}$ and the  {\it impulse Hamiltonian} ${\bf H}^{\rm (imp)}$:
$$ 
\begin{array}{l}
{\bf H}^{\rm (dr)}\Big( x,p_0,p,\lambda\Big):=\ds\max_{a\in A}\Big\{p_0+p \cdot f(x,a) -\lambda \ell_0(x,a)\Big\}, \\
{\bf H}^{\rm (imp)}\Big(x,p,\pi,\lambda\Big):=\ds\max_{w\in \C,|w| =1}\left\{p\cdot\sum_{i=1}^{m}  g_{i}(x) w^i + \pi    -\lambda \ell_1 (x,w)\right\}.
\end{array}
 $$  
 
\begin{corollary}
\label{cor2first}  Let  $\ell_1(x,\cdot)$ be positively  $1$-homogeneous and let\\ $(\bar S, \bar w^0,\bar w,\bar\alpha, \bar y^0,\bar  y,\bar\beta)$   be a canonical  extremal obeying the conditions in Theorem \ref{PMPe} 
  for some  multiplier  $(p_0, p, \pi,\lambda)$. Then  there exists a zero-measure  subset ${\mathcal N}\subset [0,\bar S]$ such that,  for  every   $s\in[0,\bar S]\setminus {\mathcal N},$ one has 
   \begin{align}
   &H\Big( \bar y(s), p_0 , p(s),\pi, \lambda,\bar w^0(s),\bar w(s),\bar\alpha(s)\Big)  =  {\bf H}\Big(\bar y(s), p_0 , p(s),\pi, \lambda\Big)   \label{dueH}  \\
 &\quad\qquad\qquad=\max\left\{{\bf H}^{\rm (dr)}(\bar y(s),p_0,p(s),\lambda)\,,\, {\bf H}^{\rm (imp)}(\bar y(s),   p(s),\pi,\lambda)\right\}=0,\notag
\\
& \bar w^0(s)\Big[p_0 + p(s)\cdot f(\bar y(s),\bar\alpha(s)) -\lambda \ell_0(\bar y(s),\bar\alpha(s))\Big] =0,  \label{cdr}
\\
&  p(s)\cdot\sum_{i=1}^{m}  g_{i}(\bar y(s)) \bar w^i(s) + \pi |\bar w(s)| - \lambda \ell_1 (\bar y(s),\bar w(s))=0.  \label{cimp}
\end{align}
 In particular,  if for some    $s\in[0,\bar S]\setminus {\mathcal N}$ one has 
\vsm
\noindent i) \, ${\bf H}^{\rm (dr)}(\bar y(s),p_0,p(s),\lambda)<0$, then $\bar w^0(s)=0$  and   
$$
 p(s)\cdot   \sum_{i=1}^{m}  g_{i}(\bar y(s)) \bar w^i(s) + \pi - \lambda\ell_1(\bar y(s),\bar w(s))={\bf H}^{\rm (imp)}(\bar y(s), p_0 , p(s),\pi,\lambda)=0;
$$
ii) \, ${\bf H}^{\rm (imp)}(\bar y(s),  p(s),\pi,\lambda)<0$, then $\bar w(s)=0$  and
$$
 p_0 + p(s)\cdot f(\bar y(s),\bar\alpha(s)) - \lambda\ell_0(\bar y(s),\bar\alpha(s))={\bf H}^{\rm (dr)}(\bar y(s),p_0,p(s),\lambda)=0.
$$
\end{corollary}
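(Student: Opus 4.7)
The plan is to exploit the positive 1-homogeneity of $\ell_1(x,\cdot)$ in order to decouple the drift and impulse contributions inside the Hamiltonian. The key preliminary observation is that, since $r\,\ell_1(x,w/r)=\ell_1(x,w)$ for every $r>0$ and $w\in\C$, the recession function collapses to
\[
\hat\ell_1(x,w^0,w)=\ell_1(x,w),\qquad (x,w^0,w)\in\cR^n\times\cR_+\times\C,
\]
independently of $w^0$. Substituting this into the definition of $H$ and evaluating along the extremal yields
\[
H(\bar y(s),p_0,p(s),\pi,\lambda,w^0,w,a)=w^0\,D(s,a)+I(s,w),
\]
where $D(s,a):=p_0+p(s)\cdot f(\bar y(s),a)-\lambda\ell_0(\bar y(s),a)$ and $I(s,w):=p(s)\cdot\sum_{i=1}^m g_i(\bar y(s))w^i+\pi|w|-\lambda\ell_1(\bar y(s),w)$, with $I(s,\cdot)$ positively 1-homogeneous.

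Next, I would parametrize $W$ by $\tau=|w|\in[0,1]$, $w^0=1-\tau$, and, when $\tau>0$, $w=\tau\hat w$ with $\hat w\in\C\cap\partial\Ba_m$. By 1-homogeneity of $I(s,\cdot)$, the restriction of $H$ to $W\times A$ becomes the affine map $\tau\mapsto(1-\tau)\,D(s,a)+\tau\,I(s,\hat w)$ on $[0,1]$, whose maximum is attained at an endpoint. Maximizing separately over $a\in A$ at $\tau=0$ and over unit $\hat w\in\C$ at $\tau=1$ gives the decomposition ${\bf H}=\max\{{\bf H}^{\rm (dr)},{\bf H}^{\rm (imp)}\}$. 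Combined with the vanishing of ${\bf H}$ from Theorem \ref{PMPe}(v), this establishes the full chain \eqref{dueH} and, in particular, forces both ${\bf H}^{\rm (dr)}\le 0$ and ${\bf H}^{\rm (imp)}\le 0$.

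To obtain the pointwise identities \eqref{cdr} and \eqref{cimp}, let $\mathcal{N}\subset[0,\bar S]$ be the null set outside of which \eqref{fe3}, \eqref{engine} hold and canonicity $\bar w^0+|\bar w|=1$ is satisfied. Fix $s\notin\mathcal{N}$: then $\bar w^0(s)\,D(s,\bar\alpha(s))+I(s,\bar w(s))=0$ by \eqref{fe3}--\eqref{engine}. Both summands are nonpositive: the first because $\bar w^0(s)\ge 0$ and $D(s,\bar\alpha(s))\le{\bf H}^{\rm (dr)}\le 0$; the second because $I(s,\bar w(s))\le|\bar w(s)|\,{\bf H}^{\rm (imp)}\le 0$ by 1-homogeneity (trivially if $\bar w(s)=0$). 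Two nonpositive summands adding to zero must each vanish, which is exactly \eqref{cdr} and \eqref{cimp}.

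Cases (i) and (ii) then follow immediately. In case (i), ${\bf H}^{\rm (dr)}<0$ forces $D(s,\bar\alpha(s))<0$, hence $\bar w^0(s)=0$ by \eqref{cdr} and $|\bar w(s)|=1$ by canonicity; then \eqref{cimp} reduces to the claimed identity (with $\pi$ in place of $\pi|\bar w(s)|$), while ${\bf H}=\max\{{\bf H}^{\rm (dr)},{\bf H}^{\rm (imp)}\}=0$ with ${\bf H}^{\rm (dr)}<0$ yields ${\bf H}^{\rm (imp)}=0$. Case (ii) is entirely symmetric: ${\bf H}^{\rm (imp)}<0$ combined with the upper bound $I(s,\bar w(s))\le|\bar w(s)|\,{\bf H}^{\rm (imp)}$ and \eqref{cimp} forces $|\bar w(s)|=0$, hence $\bar w^0(s)=1$, and then \eqref{cdr} gives $D(s,\bar\alpha(s))={\bf H}^{\rm (dr)}=0$. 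No substantive obstacle is anticipated; the whole argument hinges on the single algebraic observation that 1-homogeneity reduces the Hamiltonian maximization to an affine problem on $[0,1]$, cleanly separating the drift ($\tau=0$) from the impulse ($\tau=1$) regimes.
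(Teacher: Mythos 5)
Your proposal is correct and follows essentially the same route as the paper: both rest on the decomposition $H=w^0D(s,a)+I(s,w)$ afforded by positive $1$-homogeneity, the identification of ${\bf H}$ with $\max\{{\bf H}^{\rm (dr)},{\bf H}^{\rm (imp)}\}$, and the observation that in \eqref{fe3}--\eqref{engine} two nonpositive summands adding to zero must each vanish, yielding \eqref{cdr}, \eqref{cimp} and then (i)--(ii). The only cosmetic difference is that you obtain ${\bf H}=\max\{{\bf H}^{\rm (dr)},{\bf H}^{\rm (imp)}\}$ directly via the affine-in-$\tau$ endpoint argument, whereas the paper first shows both partial Hamiltonians are $\le 0$ by testing $w=0$ and $w^0=0$ and then rules out that both are negative by contradiction with \eqref{engine}.
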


 \begin{proof}
 By \eqref{engine} it follows that for every $s\in[0,\bar S]$, one has
 $$
 p_0w^0 + p(s)\cdot\Big(f(\bar y(s),a) w^0 +  \sum_{i=1}^{m}  g_{i}(\bar y(s)) w^i\Big) + \pi |w| - \lambda\Big(\ell_0(\bar y(s),a)w^0+\ell_1 (\bar y(s),w)\Big)\le 0
 $$
 for all  $(w^0,w,a)\in W\times A$.
Now, by choosing $w=0$ one gets that $w^0=1$ and
$$
 p_0 + p(s)\cdot f(\bar y(s),a)  - \lambda\ell_0(\bar y(s),a)\le 0,  \quad \text{for all } a\in A,
 $$ 
 while taking $w^0=0$ one obtains  
  $$
 p(s)\cdot \sum_{i=1}^{m}  g_{i}(\bar y(s)) w^i + \pi  - \lambda\ell_1 (\bar y(s),w)\le 0,  \quad \text{for all } (w,a)\in \C\times A, \ |w|=1.
 $$
 Therefore, ${\bf H}^{\rm (dr)}(\bar y(s),  p(s),\pi,\lambda)\le0$ and ${\bf H}^{\rm (imp)}(\bar y(s),  p(s),\pi,\lambda)\le0$.  In fact,  it must be that
  $\max\left\{{\bf H}^{\rm (dr)}(\bar y(s),p_0,p(s),\lambda)\,,\, {\bf H}^{\rm (imp)}(\bar y(s),   p(s),\pi,\lambda)\right\}=0$, since, otherwise, both Hamiltonians would be negative, which   contradicts \eqref{engine}.  By taking  ${\mathcal N}\subset [0,\bar S]$ to  be the zero-measure subset such that the first order maximization \eqref{fe3} is verified in $[0,\bar S]\setminus {\mathcal N}$, we get   \eqref{dueH}. 
 If  $s\in [0,\bar S]\setminus {\mathcal N}$, by  \eqref{fe3}, \eqref{engine} one has that
$$\begin{array}{l}
\ds \bar w^0(s)\Big[p_0 + p(s)\cdot f(\bar y(s),\bar\alpha(s)) -\lambda \ell_0(\bar y(s),\bar\alpha(s))\Big] \\ 
 \ds \qquad\qquad+\Big[ p(s)\cdot\sum_{i=1}^{m}  g_{i}(\bar y(s)) \bar w^i(s)) + \pi |\bar w(s)| - \lambda \ell_1 (\bar y(s),\bar w(s)) \Big]= 0.
 \end{array}
$$
Since the above argument implies that both terms in this equality are nonpositive, they necessarily vanish,  namely \eqref{cdr}  and   \eqref{cimp} are verified.

 To prove i), suppose  ${\bf H}^{\rm (dr)}(\bar y(s),  p(s),\pi,\lambda)<0$. Then \eqref{cdr}  implies  $ \bar w^0(s)=0$, so that $|\bar w(s)|=1$  and the thesis i) follows by \eqref{cimp}.  
 Finally, to prove ii) assume  that  ${\bf H}^{\rm (imp)}(\bar y(s),  p(s),\pi,\lambda)<0,$ then $\bar w(s)=0$ due to \eqref{cimp} and in view of the positive 1-homogeneity of  $H$ w.r.t. $(w^0,w)$. Hence $ \bar w^0(s)=1$ and  \eqref{cdr} yields ii). 
 \end{proof}

\begin{remark}
Under the same hypotheses of Cor. \ref{cor2first}, ${\bf H}^{\rm (dr)}(\bar y(s),p_0,p(s),\lambda)=0 $ for all $s\in[s_1,s_2]$ as soon as $s_1,s_2\in[0,\bar S]$ are such that   $\bar w^0(s) >0$ for a.e.   $s\in [s_1,s_2]\subseteq[0,\bar S]$.
\end{remark}}

\begin{corollary}
\label{cor1first}  
Let $(\bar S,\bar w^0,\bar w,\bar\alpha, \bar y^0,\bar  y,\bar\beta)$  be a  canonical 
extremal for the space-time  problem \eqref{Pe} and let  $(p_0, p, \pi,\lambda)$ be
 a corresponding multiplier. If 
\bel{qab}
 \pi=0 \ \ \text{and} \ \ \lambda \ell^e(\bar y(s),0,\pm{\bf e}_i,a)=0, \quad   \text{for all } s\in[0,\bar S],\, i=1,\ldots,m_1, \footnote{By the definition of $\ell^e$, it is clear that the quantities   $\ell^e(\bar y(s),0,\pm{\bf e}_i,a)$ do  not depend on $a$.}
\eeq
  then $ p(s)\cdot g_i(\bar y(s))=0$ for all $s\in[0,\bar S]$, $i=1, \ldots,m_1$.
\end{corollary}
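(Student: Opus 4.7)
The plan is to exploit the vanishing of the Hamiltonian \eqref{engine}, which holds \emph{everywhere} on $[0,\bar S]$, together with the two-sided nature of $\mathcal{C}_1$, by testing the inequality $H\leq {\bf H}=0$ against the special pairs $(w^0,w)=(0,\pm {\bf e}_i)$ for $i=1,\dots,m_1$.

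First I would observe that, by hypothesis {\bf (Hp)}(i) on $\mathcal{C}=\mathcal{C}_1\times\mathcal{C}_2$, the vectors $\pm {\bf e}_i$ (viewed as elements of $\cR^m$ with vanishing last $m_2$ components) belong to $\mathcal{C}_1\subseteq\mathcal{C}$ for every $i=1,\dots,m_1$. Moreover $(0,\pm {\bf e}_i)\in W$ (see \eqref{DS+}), since $0+|\pm {\bf e}_i|=1$. Hence $(0,\pm {\bf e}_i,a)$ is admissible in the maximization defining ${\bf H}$ for any $a\in A$.

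Next, I would fix $s\in[0,\bar S]$ and combine the inequality
\[
H\bigl(\bar y(s),p_0,p(s),\pi,\lambda,0,\pm {\bf e}_i,a\bigr)\;\leq\;{\bf H}\bigl(\bar y(s),p_0,p(s),\pi,\lambda\bigr)\;=\;0,
\]
which follows from the definition of ${\bf H}$ and from \eqref{engine} (valid pointwise for all $s$, not merely a.e.), with the explicit evaluation
\[
H\bigl(\bar y(s),p_0,p(s),\pi,\lambda,0,\pm {\bf e}_i,a\bigr)\;=\;\pm\,p(s)\cdot g_i(\bar y(s))+\pi\,-\,\lambda\,\ell^e\bigl(\bar y(s),0,\pm {\bf e}_i,a\bigr).
\]
Under the standing assumption \eqref{qab}, namely $\pi=0$ and $\lambda\ell^e(\bar y(s),0,\pm {\bf e}_i,a)=0$, this reduces to
\[
\pm\,p(s)\cdot g_i(\bar y(s))\;\leq\;0.
\]
Taking the inequality for both signs yields $p(s)\cdot g_i(\bar y(s))=0$ for every $s\in[0,\bar S]$ and every $i=1,\dots,m_1$, which is the required conclusion.

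There is essentially no hard step here: the only mildly delicate point is to note that \eqref{engine} is a pointwise identity on the whole of $[0,\bar S]$, so the inequality $H(\cdot,0,\pm {\bf e}_i,a)\leq 0$ holds for every $s$ (and not just almost every), and correspondingly the conclusion $p(s)\cdot g_i(\bar y(s))=0$ is obtained on all of $[0,\bar S]$ rather than merely a.e. The argument is therefore a direct consequence of the Maximum Principle once one recognizes that the hypothesis ``$\mathcal{C}_1$ contains the lines $\{r{\bf e}_i:r\in\cR\}$'' is precisely what allows both signs of ${\bf e}_i$ to be admissible impulsive directions.
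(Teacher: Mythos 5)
Your argument is correct and is essentially the paper's own proof: both exploit the pointwise identity \eqref{engine} to get $H\le 0$ for all admissible $(w^0,w,a)\in W\times A$, then test with $w^0=0$, $w=\pm{\bf e}_i$ (admissible because $\mathcal{C}_1$ contains the lines $\{r{\bf e}_i\}$) and use \eqref{qab} to kill the $\pi$ and Lagrangian terms, so that both signs force $p(s)\cdot g_i(\bar y(s))=0$ on all of $[0,\bar S]$. Your write-up merely spells out the admissibility check and the explicit evaluation of $H$, which the paper leaves implicit.
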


\begin{proof}  By \eqref{engine} it follows that  for every $s\in[0,\bar S]$ and  $(w^0,w,a)\in W\times A$,   one has 
$
 p_0w^0 +  p(s)\cdot\Big(f(\bar y(s),a) w^0 +  \sum_{i=1}^{m}  g_{i}(\bar y(s)) w^i\Big)- \lambda\ell^e(\bar y(s),w^0,w,a) \le 0.  
$
 Therefore, choosing $w^0=0$ and $w=\pm{\bf e}_i$ for any $i=1,\dots,m_1$, one gets  the thesis.
\end{proof}

\begin{remark}{\rm From Theorem \ref{PMPe},   one has $\pi=0$ as soon as $\bar\beta(\bar S)<K$. Moreover, the hypothesis  $\lambda \ell^e(\bar y(s),0,\pm{\bf e}_i,a)=0$  is obviously satisfied when the extremal $(\bar S, \bar w^0,\bar w,\bar\alpha, \bar y^0,\bar  y,\bar\beta)$ is abnormal and one chooses  $\lambda=0$,     or if $\hat\ell_1(x,0,w)=0$ for all 
$(x,w)\in\cR^n\times (\cR^{m_1}\times\{0\}{^{m_2}})$.
 (This includes, in particular, the case $\ell_1\equiv 0$, as in the minimum time problem, where $\ell_0\equiv 1$.)}
\end{remark}

 \section{A Higher Order Maximum Principle}\label{SMPho}
 Let us begin with a regularity notion for Lie brackets of the vector fields $g_1,\ldots,g_{m_1}$.
 \begin{definition}\label{DB}
For every   integer $k\ge0$, we say that a vector field $B$ is a {\rm $C^k$-admissible  Lie bracket} if $B=b(F_1,\ldots,F_q)$, where $b$ is a formal bracket  and $(F_1,\ldots,F_q)$ is a $q$-tuple of  class $C^{b+k}$ of vector fields in $\{g_1,\ldots,g_{m_1}\}$  (see Def.\ref{asindef}).  We will use $\B^k$ to denote the set of $C^k$-admissible Lie brackets of length   $\ge2$.
\end{definition}

\subsection{Higher order conditions}
\begin{theorem}[\bf Higher Order Maximum Principle]
\label{PMPeho}
Assume that  hypothesis {\bf (Hp)} is satisfied  with $\hat\ell_1(\cdot,0,\cdot)\equiv 0$. Let
  $(\bar S, \bar w^0,\bar w,\bar\alpha,  \bar y^0,\bar  y,\bar\beta)$  be a canonical  local minimizer
for the space-time problem  \eqref{Pe} that verifies $\bar\beta(\bar S)<K$. Then, for every  Boltyanski approximating cone  $\Gamma$    of the target $\cS$ at $(\bar y^0,\bar y)(\bar S)$, there exists  a multiplier  
 $(p_0, p,\pi, \lambda)\in \cR\times AC\left([0,\bar S],\cR^{ n}\right)\times \cR_-
\times \cR_+$  with $\pi=0$     that satisfies  all the conditions of Theorem \ref{PMPe} and, moreover, verifies
\begin{gather}
 p(s)\cdot g_i(\bar y(s))=0,   \qquad \text{for all } s\in[0,\bar S],\,  i=1,\dots,m_1,  \label{pg0hi}
 \\
  p(s)\cdot B(\bar y(s))=0,   \qquad \text{for all } s\in [0,\bar S],\, B\in\B^0. \label{pg000hi}
\end{gather}
\end{theorem}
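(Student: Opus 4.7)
The plan is to derive the theorem from a Boltyanski set-separation argument, enlarging the approximating cone to the reachable set at the endpoint by Lie-bracket directions generated by short impulsive multi-needle insertions. First, applying Theorem \ref{PMPe} to the canonical minimizer $(\bar S, \bar w^0, \bar w, \bar\alpha, \bar y^0, \bar y, \bar\beta)$ with the given approximating cone $\Gamma$ produces a multiplier $(p_0, p, \pi, \lambda)$; since $\bar\beta(\bar S) < K$, \eqref{piestzero} yields $\pi = 0$. The hypothesis $\hat\ell_1(\cdot, 0, \cdot) \equiv 0$ implies $\ell^e(\bar y(s), 0, \pm{\bf e}_i, a) = 0$ for every $i = 1, \ldots, m_1$ and every $a \in A$, so Corollary \ref{cor1first} directly delivers \eqref{pg0hi}.

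The core of the argument is \eqref{pg000hi}. Fix $B = b(g_{i_1}, \ldots, g_{i_q}) \in \B^0$ with each $i_j \le m_1$, a Lebesgue point $\bar s \in (0, \bar S)$ of the reference data, and a sign $\sigma \in \{-1, +1\}$. I would build a one-parameter family $\{\pi_\varepsilon\}_{\varepsilon>0}$ of feasible canonical space-time processes by splicing into the reference, at the parameter $\bar s$, an impulsive block of total $s$-length $O(\varepsilon)$, during which $w^0_\varepsilon \equiv 0$ and $w_\varepsilon$ cycles among the controls $\pm{\bf e}_{i_1}, \ldots, \pm{\bf e}_{i_q}$ in the order prescribed by the formal bracket $b$. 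The classical asymptotic commutator formulas for flows, available under the $C^{b}$-admissibility encoded in Definition \ref{DB} (only the continuity of $B$ and of the intermediate subbrackets is needed), produce a state displacement of the form
\[
y_\varepsilon(\bar s^{+}) - \bar y(\bar s) \;=\; \varepsilon^{{\rm Lgth}(b)}\,\sigma\, B(\bar y(\bar s)) + o(\varepsilon^{{\rm Lgth}(b)}),
\]
while $y^0$ is unchanged across the block and $\beta$ increases only by $O(\varepsilon)$. Both signs $\sigma = \pm 1$ are admissible because $\C_1$ contains every line $\{r{\bf e}_i\}$; the condition $\hat\ell_1(\cdot,0,\cdot)\equiv 0$ ensures the cost contribution of the impulsive block is $o(\varepsilon^{{\rm Lgth}(b)})$; and $\bar\beta(\bar S) < K$ gives $\beta_\varepsilon(\bar S + O(\varepsilon)) < K$ for $\varepsilon$ small. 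Propagating the displacement from $\bar s$ to $\bar S$ by the variational equation of the reference system produces tangent vectors $\pm v_{\bar s,B}$ at $(\bar y^0, \bar y)(\bar S)$, where $v_{\bar s,B}$ is the image of $(0, B(\bar y(\bar s)))$ under the reference fundamental matrix.

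Assembling these bilateral Lie-bracket variations together with the needle variations and target variations already used to prove Theorem \ref{PMPe}, one obtains a convex Boltyanski approximating cone $\widetilde\K$ to the extended reachable-plus-cost set at $(\bar y^0(\bar S), \bar y(\bar S), \bar J)$ that contains the symmetric pair $\pm(0, v_{\bar s,B}, 0)$ for every admissible $\bar s$ and $B$. Local minimality locally separates this set from $\cR\times\Gamma\times\cR_-$, so Theorem \ref{ThmLocallySep} forces linear separation by a covector whose $(y, \text{cost})$-components may be identified with $(p(\bar S), \lambda)$. Using the adjoint equation \eqref{fe2} — which makes $s\mapsto p(s)\cdot v(s)$ constant along every solution $v$ of the linearized system — the bilateral inclusion $\pm v_{\bar s,B}\in \widetilde\K$ yields $p(\bar S)\cdot v_{\bar s,B} = 0$, equivalently $p(\bar s)\cdot B(\bar y(\bar s)) = 0$. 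This holds on a set of full measure of $\bar s$, and since $p$ is absolutely continuous and $B\circ \bar y$ is continuous, the identity extends to the whole interval $[0,\bar S]$.

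The main obstacle is producing the asymptotic formula for the impulsive block under the minimal $C^{b}$-admissibility, rather than the subsequent set-separation. Concretely: the commutator asymptotics must be set up so that the remainder is $o(\varepsilon^{{\rm Lgth}(b)})$ even though the constituent $g_{i_j}$ need only have the bracket-adapted, non-uniform smoothness of Definition \ref{asindef}, and the block must be grafted onto the reference trajectory in a way that the coupling error from the ambient dynamics does not spoil this order. Once this technical estimate, together with its cost and $\beta$ counterparts under $\hat\ell_1(\cdot,0,\cdot)\equiv 0$, is in hand, the remaining feasibility bookkeeping — that $(\bar y^0, \bar y)(\bar S_\varepsilon)$ reaches $\cS$ up to first order in the approximating cone $\Gamma$ and that $\beta_\varepsilon(\bar S_\varepsilon) < K$ — is routine.
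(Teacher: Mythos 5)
Your outline is the paper's own strategy: impulsive blocks with $w^0=0$ cycling through $\pm{\bf e}_{i}$ realize bilateral bracket directions (bilateral because $\C_1$ contains the lines $r{\bf e}_i$, costless because $\hat\ell_1(\cdot,0,\cdot)\equiv 0$), these are propagated to the endpoint and fed, together with needle variations, into a Boltyanski set-separation, while \eqref{pg0hi} comes from Corollary \ref{cor1first} with $\pi=0$. However, the steps you label ``routine'' are exactly where the proof lives, and as written they contain genuine gaps. (i) \emph{Order matching}: your block has $s$-length $O(\varepsilon)$ and produces a displacement of order $\varepsilon^{h}$, $h={\rm Lgth}(b)$, while needle variations act at order $\varepsilon$. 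Definition \ref{approximatingcone} requires one map from a finite-dimensional parameter cone with a single $o(|v|)$ remainder, so variations of different orders cannot simply be ``assembled''; the paper reparametrizes by giving the block length $\eps^{1/h}$ (Definition \ref{DefImpulsive}) so that every variation enters linearly in its own parameter, and then proves the uniform $o(|\veceps|)$ estimate for \emph{compositions} of needle and bracket variations (Lemma \ref{LemmaMultVariations}), using only continuity moduli of $B$. That composition estimate, not the single-block asymptotics (which the paper quotes from the literature in Lemma \ref{applemma}), is the technical core, and your proposal does not address it. (ii) \emph{One multiplier for all variations}: a Boltyanski cone can encode only finitely many variations at a time, so there is no single approximating cone containing $\pm v_{\bar s,B}$ ``for every admissible $\bar s$ and $B$''. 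Each finite family gives, via Theorem \ref{ThmLocallySep}, a multiplier depending on that family; obtaining one $(p_0,p,\lambda)$ valid for a.e.\ $s$, all $B\in\B^0$, and the first-order conditions simultaneously requires the normalization--compactness and finite-intersection argument (with Lusin sets and density points) of the paper's last subsection. Your ``full measure, then continuity'' sentence presupposes a family-independent $p$, which is precisely what must be proved.

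There is also a bookkeeping error that would derail the conclusion when $\lambda>0$ and $\ell^e$ depends on $x$: the endpoint direction generated by a bracket insertion is not $\bigl(0,\,M(\bar S,\bar s)B(\bar y(\bar s)),\,0\bigr)$; its cost component equals $\frac{\partial\Psi}{\partial x}\cdot M(\bar S,\bar s)B(\bar y(\bar s))+\mu(\bar S,\bar s)\cdot B(\bar y(\bar s))$, where $\mu$ carries the downstream effect of the state perturbation on the running cost. Correspondingly, $s\mapsto p(s)\cdot v(s)$ is \emph{not} constant along the state-linearized flow, because the adjoint equation \eqref{fe2} contains the term $-\lambda\,\partial\ell^e/\partial x$, so that $\frac{d}{ds}\bigl(p(s)\cdot v(s)\bigr)=\lambda\,\frac{\partial\ell^e}{\partial x}\cdot v(s)$. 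With a zero cost component and the asserted constancy, the separation inequality yields $q(\bar s)\cdot B(\bar y(\bar s))=0$ for the Lagrangian-free adjoint $q$, which is not the statement \eqref{pg000hi} for the theorem's multiplier; keeping the $\mu$-term both in the variation direction and in the adjoint (as the paper does when defining $p(s)=\bigl(\xi-\lambda\frac{\partial\bar\Psi}{\partial x}(\bar S)\bigr)M(\bar S,s)-\lambda\mu(\bar S,s)$) repairs this. So the route is the right one, but the proposal as it stands has real gaps at the order-uniformization/composition step, at the extraction of a single multiplier, and in the cost/adjoint bookkeeping.
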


The proof of this theorem is postponed to Section \ref{SecProof}.  

  \begin{remark} Requiring the condition   $\hat\ell_1(\cdot,0,\cdot)\equiv 0$  is crucial for the general validity of Theorem \ref{PMPeho}. Otherwise, the variations  corresponding to brackets of length $h\geq 2
$ would produce a  perturbation of order $\eps^{\frac 1h}$ of the cost variable, so having infinite derivative w.r.t. $\eps$.
Since the same variation would produce a change of order $\eps$ in the dynamical variables,  the separation  Theorem \ref{ThmLocallySep} turns  out to be  not applicable.  However, as soon as the minimizer is strictly  abnormal,  one might be able to deduce some results involving Lie brackets also  for   the case $\hat\ell_1(\cdot,0,\cdot)\ne 0$ as well, possibly via some higher-order open mapping argument. This would be  similar to what happens in the case of sub-Riemannian geometry  \cite{Agrachev2019comprehensive}. We leave this issue as an open question.
\end{remark}

   \begin{remark}\label{rcs} Since we obtained the higher order necessary conditions under the only prerequisite that the involved Lie brackets are continuous, one might wonder to which extent such a regularity hypothesis can be further weakened.  
For instance, one might prove an extension of Theorem \ref{SMPho} by means of  {\em set-valued Lie brackets} of non smooth vector fields, as studied in \cite{RampazzoSussmann2001,RampazzoSussmann2007,FeleqiRampazzo2017}.
 \end{remark}

In the sequel we will use the notation $f_a(\cdot):=f(\cdot,a)$.

\begin{corollary}\label{Coreho}
Assume that hypothesis {\bf (Hp)} is satisfied  with $\hat\ell_1(\cdot,0,\cdot)\equiv 0$,  and 
let $(\bar S, \bar w^0,\bar w,\bar\alpha,  \bar y^0,\bar  y,\bar\beta)$  be a canonical  local minimizer
of   \eqref{Pe} that verifies $\bar\beta(\bar S)<K$. Given a  Boltyanski approximating cone  $\Gamma$   of the target $\cS$ at $(\bar y^0,\bar y)(\bar S)$, let   $(p_0, p ,\lambda)$  be a multiplier as in Theorem 
 \ref{PMPeho}. Then,  for any  Lie  bracket $B\in\B^1\cup\{g_1,\dots,g_m\}$,
  one has \footnote{{\em I.e.}, $B$ is a $C^1$-admissible Lie bracket (possibly of length $1$), see Definition \ref{DB}.} 
 \begin{multline}\label{MP111new}
   p(s)\,\cdot\, \Big(\big[f_{\bar\alpha(s)}, B\big](\bar y(s))\bar w^0(s) + \ds\sum_{j=m_1+1}^{m}
  \big[g_j,B\big](\bar y(s))\bar w^j(s)\Big) \\ =\ds-\lambda
\frac{\partial \ell^e}{\partial x}(\bar y(s),   \bar w^0(s), \bar w(s),\bar\alpha(s))\cdot B(\bar y(s)),
\end{multline}
 for  a.e.   $s\in [0,S]$. In particular, if $m_1=m$ and  the condition
\bel{flat0}
\  \lambda\, \frac{\partial \ell^e}{\partial x}(\bar y(s),\bar w^0(s), \bar w(s),\bar\alpha(s))\cdot B(\bar y(s))=0  \quad\text{for a.e. $s\in[0,\bar S]$ }
\eeq
  is satisfied, one obtains
 \beq\label{MP111newpart}   p(s) \cdot  \Big(\big[f_{\bar\alpha(s)}, B\big](\bar y(s)) \Big)\bar w^0(s)=0 \quad\text{for a.e. $s\in[0,\bar S]$.}
\eeq
  \end{corollary}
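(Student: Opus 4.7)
The plan is to prove \eqref{MP111new} by differentiating the identity $p(s)\cdot B(\bar y(s))=0$, which is provided by Theorem \ref{PMPeho}. First, I would note that for $B\in\B^1\cup\{g_1,\dots,g_{m_1}\}$, the vector field $B$ is of class $C^1$ (by Definition \ref{DB} for $B\in\B^1$, or by hypothesis {\bf (Hp)}(iii) in the length-$1$ case), and moreover $B\in\B^0\cup\{g_1,\dots,g_{m_1}\}$ since $\B^1\subset\B^0$. Hence \eqref{pg0hi}--\eqref{pg000hi} guarantee that $\varphi(s):=p(s)\cdot B(\bar y(s))$ vanishes identically on $[0,\bar S]$. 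Because $p$ is absolutely continuous and $B\circ\bar y$ is too (thanks to $B\in C^1$), we can differentiate to obtain $\varphi'(s)=0$ for a.e.\ $s$.

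Next, I would compute $\varphi'(s)$ explicitly using the adjoint equation \eqref{fe2} (with $\pi=0$ as granted by Theorem \ref{PMPeho}) and the state equation \eqref{extended}. A direct substitution gives
\begin{equation*}
\varphi'(s)=\Big[-p(s)\cdot\Big(Df_{\bar\alpha(s)}(\bar y(s))\bar w^0(s)+\sum_{i=1}^m Dg_i(\bar y(s))\bar w^i(s)\Big)+\lambda\tfrac{\partial \ell^e}{\partial x}\Big]\cdot B(\bar y(s))+p(s)\cdot DB(\bar y(s))\Big(f_{\bar\alpha(s)}\bar w^0+\sum_i g_i\bar w^i\Big).
\end{equation*}
Regrouping the terms through the Lie bracket identity $[X,Y]=DY\cdot X-DX\cdot Y$ yields
\begin{equation*}
0=\lambda\,\tfrac{\partial \ell^e}{\partial x}\cdot B(\bar y(s))+p(s)\cdot[f_{\bar\alpha(s)},B](\bar y(s))\,\bar w^0(s)+\sum_{i=1}^m p(s)\cdot[g_i,B](\bar y(s))\,\bar w^i(s),
\end{equation*}
valid for a.e.\ $s\in[0,\bar S]$.

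To conclude \eqref{MP111new}, I would separate the summation into indices $i\le m_1$ and $i>m_1$. For each $i\in\{1,\dots,m_1\}$, the bracket $[g_i,B]$ is a $C^0$-admissible Lie bracket of length $\geq 2$: indeed, by the proposition following Definition \ref{asindef}, since $g_i\in C^1$ and $B$ has $C^{b+1}$-admissible components (with the length-$1$ case $B=g_j$ verified directly), the bracketed tuple belongs to $C^{[X_0,b]+0}$. Consequently $[g_i,B]\in\B^0$, and \eqref{pg000hi} forces $p(s)\cdot[g_i,B](\bar y(s))=0$. Only the indices $j=m_1+1,\dots,m$ survive in the sum, producing exactly \eqref{MP111new}.

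Finally, \eqref{MP111newpart} is an immediate specialization: when $m_1=m$ the surviving sum is empty, and imposing \eqref{flat0} annihilates the right-hand side of \eqref{MP111new}, leaving $p(s)\cdot[f_{\bar\alpha(s)},B](\bar y(s))\bar w^0(s)=0$ a.e. The only mildly delicate point is the bookkeeping in the third step—namely, verifying via Definition \ref{DB} and the proposition on $C^{b+k}$ classes that $[g_i,B]$ has the right admissibility for both $B\in\B^1$ and $B\in\{g_1,\dots,g_{m_1}\}$—but this is a routine check once the admissibility calculus is set up.
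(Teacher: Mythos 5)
Your proof is correct and follows essentially the same route as the paper, which simply differentiates the identities \eqref{pg0hi}--\eqref{pg000hi} along the reference trajectory and substitutes the adjoint equation \eqref{fe2}; the bookkeeping you add (that $B\in C^1$, and that $[g_i,B]\in\B^0$ for $i\le m_1$ so those terms drop by \eqref{pg000hi}) is exactly the detail the paper leaves implicit. Note only that you work with $B\in\B^1\cup\{g_1,\dots,g_{m_1}\}$ rather than $\{g_1,\dots,g_m\}$ as in the statement, but this is precisely the range the paper's own one-line argument covers, since \eqref{pg0hi} yields $p\cdot g_i=0$ only for $i\le m_1$.
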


\begin{proof}
Condition \eqref{MP111new} can be obtained by differentiating  \eqref{pg0hi} or \eqref{pg000hi} and remembering  that the derivative of $p$ verifies the adjoint equation \eqref{fe2}.
  \end{proof}
  
  \begin{remark}{\rm  Condition \eqref{flat0} is satisfied for all $s\in[0,\bar S]$
   in at least two important situations, namely in the {\it abnormal case}, i.e. if $\lambda=0$, or when
$\ell = \ell_0 + \ell_1(u),$
with  $\ell_0$, $\ell_1$ independent of $x$ and  $\hat\ell_1(0,w)\equiv 0$ (for instance, in the minimum time problem).
   }\end{remark}


\begin{remark}[\sc Linear Systems]\label{remlin}{\rm  Let us consider the linear system
$$
\frac{dx}{dt}=  
C x+E u, \qquad u\in\cR^m,
$$
where $C$, $E$ are  $n\times n$ and  $n\times m$  real matrices,  respectively.    For the vector fields    
$f(x)=:Cx$,  and   $g_i$, where $g_{ij}:=E_{ji}$  for each   $i=1,\dots,m$, $j=1,\dots,n$, 
  the conditions involving Lie brackets of the $g_i$  become trivial, since $[g_i,g_j] = 
 0$. However, because of the linearity of $f(x) = Cx$, further higher order conditions can be trivially deduced under assumption 
\eqref{flat0}.  Indeed,  condition \eqref{pg0hi}
  reduces to 
\bel{c2}
p(s)\cdot E =0,  \quad\text{for all } s\in[0,\bar S],
\eeq
while, due to \eqref{flat0}, the adjoint equation  now reads
$
 \frac{dp}{dt}= -p\cdot C 
$.
Therefore by   differentiating \eqref{c2} $n-1$ times,  we get the {\it additional necessary conditions}
  $p\cdot [f,g_i]\,=\, p\cdot  [f,[f,g_i]]\,=\, p\cdot [f,[f,[\dots,[f,g_i]\dots]]]=0$,   for all  $i=1,\dots,m$,
which correspond to the $n-1$ matrix relations
\bel{hobell}
  p\cdot CE=0,\ \ p\cdot  C^2E=0,\  \dots,  \  p\cdot  C^{n-1}E=0.
\eeq}
\end{remark}

\begin{remark}
As  observed in the Introduction, some motivations  for studying impulsive systems are to be found  in Classical Mechanics. This  is  a reason why one  might be interested in extending previous results to  manifolds. Actually, such an  extension does not present any special difficulty, in that the thesis of Theorem \ref{PMPeho} has a {\em chart-independent character.}  
\end{remark}

\subsection{Fully impulsive processes}   
The necessary conditions established  in Theorems  \ref{PMPe} and \ref{PMPeho} can be used to get information on the structure of optimal trajectories: for instance,  one can wonder under which conditions  an optimal trajectory is a finite concatenation of   impulsive and non impulsive paths (as it occurs  e.g.  in the example in \cite{AronnaMottaRampazzo2019}).  Though an accurate investigation in this direction goes beyond the objectives of this paper, let us  highlight some   rank  conditions that happen to 
   force an optimal process $(\bar S, \bar w^0,\bar w,\bar\alpha,  \bar y^0,\bar  y,\bar\beta)$  to be {\it  fully impulsive}. By this we mean that  it evolves in zero time,  namely $\bar y^0(\bar S)=0$, or, equivalently,  $\bar w^0=0$ a.e. on $[0,\bar S].$
   
 \vsm
To state our result,   we introduce two rank-type assumptions:  

\vsm
{\it \noindent {\bf(I)}  
 {\sc $C^0$-Pointwise Rank Conditions} at $x\in\cR^n$. 
 
{\rm({\bf I}.1)$_x$} there exists an integer $r\geq 0$ and iterated   Lie  brackets $B_1,\ldots, B_{r}\in\B^0$  such that 
\bel{r1}
 {\rm span}\left\{B_1, \ldots, B_{r},g_1,\dots,g_{m_1}\right\}(x) = \cR^n ;  \  \footnote{We mean that $\{\zeta_1,\ldots,\zeta_N\} =\emptyset$ as soon as $N=0$.}
\eeq

{\rm ({\bf I}.2)$_x$} for every $a\in A$, there exist   integers $r\geq 0$, $k\geq 0$, and iterated Lie  brackets $B_1,\ldots, B_r\in\B^0$,
$\hat B_1,\ldots, \hat B_{k}\in\B^1$,  such that 
 \bel{fc0rank}
  {\rm span}\big\{B_1,..., B_r, [f_a,\hat B_1],..., [f_a,\hat B_k],  g_1,...,g_{m_1},  [f_a,g_1],...,[f_a,g_{m_1}]\big\} (x)= \cR^n.  
\eeq

\noindent {\bf(II)} {\sc Kalman Controllability Condition}.
The system is linear and  the  Kalman Controllability Condition    is verified, namely 
$$\frac{dx}{dt}=  C x+E u,  \ \text{ and } \ {\rm rank}  (E\,\,\,\,CE \,\,\,\, C^2E\,\,\,\, \ldots\,\,\,\, C^{n-1}E) = n ,
$$
where $C$, $E$ are  $n\times n$ and  $n\times m$  real matrices,  respectively.  
}
\vsm 
We will consider the following  assumption:
 \vsm
 \begin{itemize}
\item[{\bf (Hp1)}]  {\it Hypothesis {\bf (Hp)} holds and, moreover,
 {\rm (i)}  the target is time-invariant, namely $\cS = \cR\times \hat\cS$, with $\hat\cS\subseteq\cR^n$;
{\rm (ii)}  the final  cost $\Psi$ is time-independent;
{\rm (iii)}  the Lagrangian $\ell$ is strictly positive and $\hat\ell(\cdot,0,\cdot)\equiv 0$.
}
\end{itemize}
 \begin{theorem}\label{ThmFully} 
 Let us assume  hypothesis {\bf (Hp1)}.
Let  $(\bar S, \bar w^0,\bar w,\bar\alpha,  \bar y^0,\bar  y,\bar\beta)$ be a canonical  local minimizer for \eqref{Pe} such that $\beta(\bar S)<K$,  and let   $(p_0,p, \lambda)$  be a multiplier as in Theorem    \ref{PMPeho}. If {\em  one} of the options {\rm (a)}--{\rm (c)} below is verified, then the process  $(\bar S, \bar w^0,\bar w,\bar\alpha,  \bar y^0,\bar  y,\bar\beta)$   is  fully impulsive. 
\begin{itemize}
\item[{\rm (a)}] For every $s\in[0,\bar S]$, the  $C^0$-Pointwise Rank Condition {\em({\bf I}.1)$_{\bar y(s)}$} is verified.
\item[{\rm (b)}]   For every $s\in[0,\bar S]$,  the $C^0$-Pointwise Rank Condition {\em({\bf I}.2)$_{\bar y(s)}$} is verified, while 
$J :=\big\{s \in [0,\bar S] :\   \hbox{{\em({\bf I}.1)$_{\bar y(s)}$} is not verified}\big\}\ne\emptyset$. Furthermore,  $m_1=m$, and  $\lambda\, \frac{\partial \ell^e}{\partial x}(\bar y(s),\bar w^0(s), \bar w(s),\bar\alpha(s)) =0$ for a.e.  $s\in J$. 
\item[{\rm (c)}] The system is linear, the Kalman Controllability Condition  {\rm ({\bf II})} is verified,  and
$\lambda\, \frac{\partial \ell^e}{\partial x}(\bar y(s),\bar w^0(s), \bar w(s),\bar\alpha(s)) =0$  for a.e. $s\in [0,\bar S]$. 
\end{itemize}
\end{theorem}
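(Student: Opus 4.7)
My plan is to argue by contradiction, assuming the \emph{non-impulsive set}
$E:=\{s\in[0,\bar S]:\ \bar w^0(s)>0\}$
has positive measure, and to derive a contradiction in each of cases (a), (b), (c) by first forcing the adjoint $p$ to vanish on $E$ and then combining this with the vanishing Hamiltonian \eqref{engine} and the strict positivity of $\ell$ from {\bf (Hp1)}(iii). As a preliminary reduction I would exploit the time-invariance of the target ($\cS=\cR\times\hat\cS$) and of the final cost $\Psi$: any Boltyanski approximating cone to $\cS$ at $(\bar y^0(\bar S),\bar y(\bar S))$ can be taken of the form $\cR\times\hat\Gamma$, whose polar lies in $\{0\}\times\cR^n$, so the non-transversality \eqref{fe4} together with $\partial_t\Psi\equiv 0$ forces $p_0=0$, while $\bar\beta(\bar S)<K$ yields $\pi=0$ via \eqref{piestzero}.

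The core of the argument is to show $p(s)=0$ on $E$. In case (a) this is immediate: the relations $p(s)\cdot g_i(\bar y(s))=0$ and $p(s)\cdot B(\bar y(s))=0$ for $B\in\B^0$ (Theorem \ref{PMPeho}) together with $({\bf I}.1)_{\bar y(s)}$ force $p\equiv 0$ on all of $[0,\bar S]$. In case (b) the same argument gives $p=0$ on $[0,\bar S]\setminus J$; on $J\cap E$ I would plug the hypothesis $\lambda\,\partial_x\ell^e\equiv 0$ a.e.\ on $J$ into formula \eqref{MP111new} of Corollary \ref{Coreho} (the non-drift sum is empty since $m_1=m$), obtaining $p(s)\cdot[f_{\bar\alpha(s)},B](\bar y(s))\,\bar w^0(s)=0$ for every $B\in\B^1\cup\{g_1,\dots,g_m\}$, then cancel the strictly positive factor $\bar w^0$ and invoke $({\bf I}.2)_{\bar y(s)}$ to conclude $p(s)=0$ on $E\cap J$. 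In case (c), iterating \eqref{MP111new} along the chain $[f,g_i],[f,[f,g_i]],\ldots$, which for the linear system $\dot x=Cx+Eu$ collapses to $CE_{\cdot i},C^2E_{\cdot i},\ldots$, and using $\lambda\,\partial_x\ell^e=0$, one obtains $p(s)\cdot C^kE\,\bar w^0(s)=0$ for $k=1,\dots,n-1$; combined with $p\cdot E=0$ from \eqref{pg0hi}, on $E$ the Kalman matrix $(E\ CE\ \cdots\ C^{n-1}E)$ has $p(s)$ in its left null-space, forcing $p(s)=0$.

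With $p(s^\ast)=0$ for some $s^\ast\in E$ in hand, I close the argument by splitting on $\lambda$. If $\lambda>0$, evaluating the vanishing Hamiltonian \eqref{engine} at the optimal $(\bar w^0(s^\ast),\bar w(s^\ast),\bar\alpha(s^\ast))$ with $p_0=\pi=0$ and $p(s^\ast)=0$ yields $\ell^e(\bar y(s^\ast),\bar w^0(s^\ast),\bar w(s^\ast),\bar\alpha(s^\ast))=0$; but $\bar w^0(s^\ast)>0$ gives $\ell^e=\bar w^0\,\ell(\bar y,\bar w/\bar w^0,\bar\alpha)>0$ by strict positivity of $\ell$, a contradiction. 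If $\lambda=0$, the adjoint equation \eqref{fe2} reduces to a homogeneous linear ODE in $p$ with bounded measurable coefficients, so $p(s^\ast)=0$ propagates to $p\equiv 0$ on $[0,\bar S]$; together with $p_0=\pi=\lambda=0$ this violates the non-triviality \eqref{fe1}.

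The main obstacle is case (b), where I must carefully partition $[0,\bar S]$ into $[0,\bar S]\setminus J$, $J\cap E$, and $J\setminus E$, obtaining $p=0$ only on the first two pieces; it is essential that $\bar w^0>0$ on $E$ in order to strip the $\bar w^0$-factor from \eqref{MP111new} before the purely algebraic rank condition $({\bf I}.2)$ can be applied. Once this cancellation is in place, case (c) is morally a Kalman-via-differentiation argument executed through the Lie-bracket identity \eqref{MP111new} rather than by direct derivation of $p\cdot E$, and the closing step via the strict positivity of $\ell$ is routine.
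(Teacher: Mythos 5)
Your overall architecture is the paper's: $p_0=0$ from time-invariance of $\cS$ and $\Psi$, $\pi=0$ from $\bar\beta(\bar S)<K$, then force $p$ to vanish where $\bar w^0>0$ using \eqref{pg0hi}, \eqref{pg000hi} and the rank hypotheses, and close via the vanishing Hamiltonian \eqref{engine}, the strict positivity of $\ell$, and non-triviality \eqref{fe1} --- the paper isolates this closing step as Lemma \ref{lmap0}, and your split into the cases $\lambda>0$ and $\lambda=0$ (with linearity of the adjoint equation when $\lambda=0$) is exactly its proof. Cases (a) and (b) follow the paper's argument; in (b) you are in fact a bit more careful than the paper, which drops the $\bar w^0$-factor of \eqref{MP111newpart} on all of $J$, whereas you cancel it only on $J\cap\{\bar w^0>0\}$ --- which suffices, since the final contradiction only needs $p=0$ a.e.\ on the set where $\bar w^0>0$.

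The genuine gap is in case (c). You propose to get $p(s)\cdot C^kE\,\bar w^0(s)=0$ for $k=1,\dots,n-1$ by ``iterating \eqref{MP111new} along the chain $[f,g_i],[f,[f,g_i]],\dots$'', but Corollary \ref{Coreho} applies only to $B\in\B^1\cup\{g_1,\dots,g_m\}$, i.e.\ to the fields $g_i$ and their iterated brackets; with $B=g_i$ it does give $p\cdot[f,g_i]\,\bar w^0=0$, i.e.\ the $k=1$ relation, but $[f,[f,g_i]]$ involves $f$ and is outside the corollary's scope, and \eqref{pg0hi}--\eqref{pg000hi} provide no identity whose differentiation would cover it. The paper's route (Remark \ref{remlin}) is different: it starts from the identity $p(s)\cdot E=0$ valid for \emph{every} $s$ (relation \eqref{c2}) and differentiates it repeatedly along the adjoint equation, which under hypothesis (c) reduces to $\frac{dp}{ds}=-p\,C\,\bar w^0(s)$, i.e.\ $p(s)=p(0)e^{-C\bar y^0(s)}$; if the set $\{\bar w^0>0\}$ has positive measure, $t=\bar y^0(s)$ sweeps a nondegenerate interval, and differentiating $p(0)e^{-Ct}E=0$ in $t$ yields \eqref{hobell}, whence $p\equiv0$ by the Kalman condition and one concludes as in (a)--(b). (If one insists on working in the $s$-variable, the relations $p\cdot C^kE=0$ for $k\ge1$ are only known a.e.\ on $\{\bar w^0>0\}$, not identically, so iterating the differentiation needs the extra observation that an absolutely continuous function vanishing a.e.\ on a measurable set has derivative vanishing a.e.\ there --- a point the paper itself glosses over.) So your closing contradiction is fine, but the mechanism you cite for producing the higher-order relations in (c) does not deliver them and must be replaced by the adjoint-equation differentiation of $p\cdot E\equiv0$.
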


Preliminarily, let us   prove the following result:
\begin{lemma}
\label{lmap0} 
Assume {\em (i)} and {\em (ii)} in  hypothesis {\bf (Hp1)},    and let $\pi = 0$.
Then for any subset $\mathcal{J}\subseteq[0,T]$ of positive measure one has neither  
\bel{p01}
p(s) = 0 \ \text{ and } \  
 \ell^e \big(\bar y(s),(\bar y(s), \bar w^0(s),\bar w(s),\alpha(s)\big)  > 0,\quad \text{for a.e.} \,\,s\in \mathcal{J} 
 \eeq
 nor
 \bel{p02}
p(s) = 0  \ \text{ and } \  \frac{\partial \ell^e}{\partial x }\big(\bar y(s), \bar w^0(s),\bar w(s),\alpha(s)\big)\neq  0,  
 \quad  \text{for a.e.}\,\, s\in \mathcal{J}. 
\eeq
\end{lemma}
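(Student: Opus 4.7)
The plan is to reduce to the case $p_0 = 0$ using the time-invariance of the target and the time-independence of $\Psi$, then in each of the two forbidden scenarios force $\lambda = 0$, and finally exploit the resulting linear homogeneous character of the adjoint equation to conclude $p \equiv 0$, contradicting non-triviality.

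\emph{Step 1: $p_0 = 0$.} By hypothesis (i), $\cS = \cR \times \hat\cS$. Choose as Boltyanski approximating cone of $\cS$ at $(\bar y^0, \bar y)(\bar S)$ the product $\Gamma = \cR \times \hat\Gamma$, with $\hat\Gamma$ any Boltyanski approximating cone of $\hat\cS$ at $\bar y(\bar S)$; this is valid since moving along the time axis in $\cS$ is unconstrained. Then $\Gamma^\perp = \{0\} \times \hat\Gamma^\perp$, and reading the first coordinate of the transversality \eqref{fe4} gives $p_0 + \lambda \, \frac{\partial \Psi}{\partial t}((\bar y^0, \bar y)(\bar S)) = 0$. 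Hypothesis (ii) yields $\partial \Psi / \partial t \equiv 0$, whence $p_0 = 0$.

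\emph{Step 2: Force $\lambda = 0$.} Assume first that \eqref{p01} holds. At a.e.\ $s \in \mathcal{J}$ the substitution $p_0 = \pi = 0$ and $p(s) = 0$ into \eqref{fe3} combined with \eqref{engine} yields $\lambda \, \ell^e(\bar y(s), \bar w^0(s), \bar w(s), \bar\alpha(s)) = 0$; since the Lagrangian is strictly positive on $\mathcal{J}$, one obtains $\lambda = 0$. Assume instead that \eqref{p02} holds. Because $p$ is absolutely continuous and vanishes on $\mathcal{J}$, its derivative exists a.e.\ on $\mathcal{J}$ and equals zero there. Plugging $p(s) = 0$ and $dp/ds(s) = 0$ into the adjoint equation \eqref{fe2} collapses it to $\lambda \, \frac{\partial \ell^e}{\partial x}(\bar y(s), \bar w^0(s), \bar w(s), \bar\alpha(s)) = 0$ a.e.\ on $\mathcal{J}$; the pointwise non-vanishing of this partial derivative again forces $\lambda = 0$.

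\emph{Step 3: Conclude by ODE uniqueness.} With $\lambda = 0$ and $\pi = 0$, the adjoint equation \eqref{fe2} reduces to the linear homogeneous ODE $\frac{dp}{ds}(s) = -p(s) \cdot \bigl[\frac{\partial f}{\partial x}(\bar y(s), \bar\alpha(s)) \, \bar w^0(s) + \sum_{i=1}^m \frac{\partial g_i}{\partial x}(\bar y(s)) \, \bar w^i(s)\bigr]$ with bounded measurable coefficients. Since $p$ vanishes at some point of the positive-measure set $\mathcal{J}$, uniqueness of Carath\'eodory solutions forces $p \equiv 0$ on $[0, \bar S]$. Combined with $p_0 = 0$ and $\lambda = 0$, this contradicts the non-triviality condition \eqref{fe1}, yielding the claim in both scenarios.

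The main subtlety is Step 1: one must justify picking a Boltyanski approximating cone in the product form $\cR \times \hat\Gamma$, which is the cleanest way to extract the $t$-coordinate of the transversality. Once this is granted, the remaining steps are routine applications of the first-order maximization, the adjoint equation, and uniqueness for linear ODEs.
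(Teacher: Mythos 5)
Your proof is correct and follows essentially the same route as the paper's: reduce to $p_0=0$ via the product-form cone $\cR\times\hat\Gamma$ and the non-transversality condition, force $\lambda=0$ in the two scenarios using the vanishing of the Hamiltonian along the reference control and the adjoint equation (with $dp/ds=0$ a.e.\ on $\mathcal J$), and conclude $p\equiv 0$ by linearity of the adjoint equation, contradicting non-triviality. The only difference is that you spell out a couple of justifications (the choice of the product cone and the vanishing of $dp/ds$ on $\mathcal J$) that the paper leaves implicit.
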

\begin{proof} By hypothesis {\bf (Hp1)} (i), $\Gamma= \cR\times\hat\Gamma$, with $\hat\Gamma$ a cone of $\cR^n$.     Because of {\bf (Hp1)} (ii) and of the identity $\Gamma^\perp = \{0\}\times \hat\Gamma^\perp$,  the non-transversality condition yields
$
p_0 =-\lambda \frac{\partial \Psi}{\partial t} \big(\bar y^0(\bar S),\bar y(\bar S)\big) + 0 = 0.
$

First, let us assume by contradiction  that 
\eqref{p01} is verified on a subset  $\mathcal{J}\subseteq [0,\bar S]$ of positive measure.
 Since $(p_0,p(s),\pi) = (0,0,0)$ for all  $s\in \mathcal{J}$, by   \eqref{engine} 
 we obtain that 
$
\lambda\ell^e\big(\bar y(s),\bar w^0(s),\bar w(s),\bar \alpha(s)\big) = 0$ for a.e.  $s\in \mathcal{J},
$
which by \eqref{p01}  implies that
$\lambda = 0$.  

  Secondly, assume that  \eqref{p02} is verified on a subset $\mathcal{J}\subseteq [0,\bar S]$ of positive measure. We still have $(p_0, p(s),\pi) = (0,0,0)$ on $\mathcal{J}$ and, by the adjoint equation, we deduce 
$
\lambda\frac{\partial\ell^e}{\partial x}\big(\bar y(s),\bar w^0(s),\bar w(s),\bar \alpha(s)\big)=0 $ for a.e.  $s\in \mathcal{J}$ 
so that by \eqref{p02} one gets again
$\lambda=0$. 

Choose a point $\hat s\in \mathcal{J}$, so that 
 $p(\hat s)=0$. Since in both cases one has
$\lambda = 0$, the adjoint equation is linear in $p$, which in turn  implies that 
$p\equiv 0$ on $[0,\bar S]$.
Therefore,  $(p_0,p ,\pi,\lambda) = 0$, which contradicts the non-triviality condition.
\end{proof} 

\begin{proof}[Proof of Theorem \ref{ThmFully}] Observe that, since $\beta(\bar S)<K$, one has $\pi=0$. 
 
Suppose first that hypothesis (a) is verified. For every $s\in [0,\bar S]$,  by  {({\bf I}.1)$_{\bar y(s)}$} there exist an integer $r\geq 0$ and Lie brackets $B_1,\ldots,B_r\in \B^0$ verifying the rank condition \eqref{r1} and, in view of \eqref{pg0hi}, \eqref{pg000hi},   for all $s\in[0,\bar S]$, one has
$$
p(s)\cdot g_i(\bar y(s))=0, \quad  p(s)\cdot B_j(\bar y(s)) = 0 
$$
 for all  $i=1,\dots,{m_1}$,  $j=1,\dots,r$. Therefore, we obtain $ p(s)=0$ for all  $s\in [0,\bar S].$ 
  Assume  by contradiction that there exists a subset of positive measure $\mathcal{J}\subseteq [0,\bar S]$
 such that  $\bar w^0(s)>0$ for a.e. $s\in \mathcal{J}$. By the positivity of the function $\ell$, this implies that 
  $\ell^e\big(\bar y^0(s),\bar y(s), \bar w^0(s),\bar w(s),\bar \alpha(s)\big)>0$   for a.e. $s\in \mathcal{J}$, which in turn  is ruled out by  Lemma \ref{lmap0} above.
  
Assume now that (b) holds true. If  $s\in [0,\bar S]\backslash J$,  we get  $p(s)=0$ arguing as in the previous case. If $J$ has zero-measure, this also implies that  $p(s)=0$ for all $s\in[0,\bar S]$.  On the contrary, assume that  $J$ has positive measure.   For almost every $s\in J$
   and for  $a:=\bar \alpha(s)$,  by  {({\bf I}.2)$_{\bar y(s)}$} there exist  integers $r,k\geq 0$ and Lie brackets $B_1,\ldots,B_r\in \B^0$, 
  $\hat B_1,\ldots,\hat B_k\in \B^1$ verifying the rank condition \eqref{fc0rank}. Moreover, for almost every  $s\in J$, by  \eqref{pg0hi}, \eqref{pg000hi}, and  \eqref{MP111newpart} one has
\begin{gather*}
p(s)\cdot g_i(\bar y(s))=0, \quad  p(s)\cdot B_j(\bar y(s)) = 0, \\ 
p(s)\cdot [f_{\bar \alpha(s)},g_i](\bar y(s))=0, \quad p(s)\cdot [f_{\bar \alpha(s)},\hat B_l](\bar y(s))= 0,
\end{gather*}
for all $i=1,\dots,{m}$,  $j=1,\dots,r$,  $l=1,\dots,k$.
We then deduce that $p(s)=0$ for almost every $s\in J$. Summing up the above occurrences, by the continuity of $p$ we get  $p(s)=0$ for every $s\in [0,\bar S]$.
 Now assume  by contradiction  that there exists a subset $\mathcal{J}\subseteq[0,\bar{S}]$ of positive measure such that  $\bar w^0(s)>0$ for a.e. $s\in \mathcal{J}$. At this point, the thesis follows arguing exactly as in case (a). 
 
 Finally, suppose that (c) holds true. The linear relations \eqref{c2},  \eqref{hobell}  imply $p\equiv 0$, so, in view of the  hypothesis $\ell>0$ one concludes as in  cases (b) and (c).
\end{proof}

 \begin{remark}{\rm
  As mentioned in the introduction, our conditions might be  regarded as a generalization to impulsive trajectories of   \cite{ChittaroStefani2016}, where one assumes further that  ${\mathcal C}=\cR^m$, the vector fields $g_1,\ldots,g_m$ are of class $C^\infty$,  and their Lie algebra has constant dimension.  In   \cite{ChittaroStefani2016} one considers   
 the (non extended) minimum time problem with  $L^\infty$ controls taking values in an unbounded set. 
Now, since the dynamics is control-affine, an optimal control might fail to exist in this class or even in the class of  $L^1$ functions. On the other hand, if such an optimal control existed, the corresponding cost might or might not coincide with the infimum value of the extended, impulsive problem. Actually, in  \cite{ChittaroStefani2016} one assumes that the optimal process is a {\it normal extremal}, and this is similar to a sufficient condition established in \cite{MottaRampazzoVinter2019} for the avoidance of infimum-gaps. One might conjecture that,  for some reason,\footnote{E.g. because of the {\it abundantness} (see \cite{Warga1972}) of the absolutely continuous  trajectories  in the set of  extended, impulsive trajectories.} a  higher-order Maximum Principle  valid for the impulsive system can be a necessary condition for the non-impulsive unbounded system as well.  }
\end{remark}

\section{Proof of Theorem \ref{PMPeho}}\label{SecProof}
Let $(\bar S, \bar w^0,\bar w,\bar\alpha, \bar y^0,\bar  y,\bar\beta)$ be  a canonical  local minimizer of \eqref{Pe}  verifying $\bar\beta(\bar S)<K$, that we will call the {\it reference process}. Throughout this section $\hat \ell_1(\cdot,0,\cdot)\equiv 0$, as required in the statement of Thm. \ref{PMPeho}. Moreover, we  set
\begin{gather*}
\ds F^e(x,w^0,w,a):=f(x,a)w^0+\sum_{i=1}^mg_i(x)w^i, \quad \text{for all } (x,w^0,w,a)\in\cR^n\times \cR_+\times\C\times A, \\ 
\ds\bar F^e(s):=F_e(\bar y,\bar w^0,\bar w,\bar \alpha)(s), \quad \bar \ell^e(s):=\ell^e(\bar y,\bar w^0,\bar w,\bar \alpha)(s), \quad \text{for a.e. $s\in[0,\bar S]$}.
\end{gather*}
The proof will be divided in several steps. First, following a time-rescaling procedure,  we transform problem  \eqref{Pe} into a problem  on the fixed interval
 $[0,\bar S]$. At this point,  we define two  classes of    variations, comprising  standard {\em needle variations} and {\it  bracket-like variations}, the latter being produced by suitable  {\it instantaneous} perturbations of the reference process.  By using  appropriate powers  of the perturbation parameter $\eps$, all these variations turn out to be of the same order $\eps$. Once this is done, the proof proceeds by some  set-separation arguments. 
 
\subsection{Rescaling the problem}
 
 \begin{definition}Fix $\rho>0$.
 For any   $(S, w^0,w,\alpha,\zeta)\in \mathcal{W}\times L^\infty([0,\bar S], [-\rho,\rho]) ,$  we  say that $(S,w^0,w,\alpha,\zeta,y^0,y,y^\ell,\beta)$ is  a  {\em rescaled (space-time) process} if  $(y^0,y,y^\ell,\beta)$ is  the unique Carath\'eodory solution of 
  \bel{CPR}
  \left\{\begin{split}
   \ds\frac{dy^0}{ds} &=  {w}^0 (1+\zeta),\\
   \ds\frac{dy}{ds} &=F^e(y, {w}^0,{w},\alpha)(1+\zeta),\\
   \ds\frac{dy^\ell}{ds} &= \ell^e(y, {w}^0,{w},\alpha)(1+\zeta),\\
\ds\frac{d\beta}{ds}& = |w| (1+\zeta),\\
  (y^0,& y, y^\ell,\beta)(0) = (0,\xbo,0,0),
  \end{split}
  \right. \qquad \text{a.e. }s\in [0,\bar S],\eeq
and $(S,w^0,w,\alpha,y^0,y,y^\ell,\beta)$ is called {\em  feasible} if $(y^0(S),y(S),\beta(S)) \in \cS\times [0,K]$. 
 \end{definition}
We define  the  {\it rescaled} space-time optimization problem as
\bel{Pee}\tag{P\tiny e}
\left\{\begin{array}{l}
\text{minimize }\Big\{\h((y^0,y)(\bar S)) + y^\ell(\bar S)\Big\}, \\
\text{over feasible rescaled processes 
  $( \bar S,  w^0, w, \alpha, \zeta, y^0, y, y^\ell, \beta)$.}
  \end{array}\right.
\eeq
 It is easy to see  that, for  $\rho>0$ sufficiently small,   the reference  process, regarded as  a process   $(\bar S, \bar w^0,\bar w,\bar \alpha, 0,\bar y^0,\bar y,\bar y^\ell,\bar \beta)$ of  \eqref{CPR},
   is a local minimizer  for \eqref{Pee}, which is a  fixed end-time problem.\footnote{{\em I.e.,} there exists $\delta >0$ such that
$\h((\bar y^0,\bar y)(\bar S)) +\bar y^\ell(\bar S) \leq  
\h((y^0,y)(\bar S)) +y^\ell(\bar S)$
 for all feasible   processes  $( \bar S,  w^0, w, \alpha, \zeta, y^0, y, y^\ell, \beta)$ satisfying
$
{\rm d}\big((\bar S, y^0, y ,y^\ell, \beta) , (\bar S,\bar y^0, \bar y ,\bar y^\ell, \bar \beta)\big)<\delta
$.}
Since  the proof  involves  only space-time trajectories which are  close to  the reference space-time trajectory $(\bar y^0 , \bar y )$,   using standard  truncation and mollification arguments, we can assume the following hypothesis:
\begin{itemize}
\item[{\bf (Hp)$^*$}] {\it all the  assumptions  in {\bf (Hp)} are verified and, moreover,  $\ell^e$,  $f$, the  $g_i$,  their partial derivatives $\ds\frac{\partial \ell^e}{\partial x^j}$, $\ds\frac{\partial f}{\partial x^j}$, $\ds\frac{\partial g_i}{\partial x^j}$ and all the iterated brackets $B\in \B^0$ (as defined in Definition \ref{DB}) are uniformly continuous and bounded. }
\end{itemize}  
Hypothesis {\bf (Hp)$^*$} guarantees that for any $(w^0, w, \alpha, \zeta)\in L^\infty([0,\bar S], W\times A\times \left[-\rho,\rho\right])$ there exists a unique solution $(y^0, y, y^\ell, \beta)$ to \eqref{CPR}, defined on the whole interval $[0,\bar S]$.
Moreover, the input-output map 
\bel{ium}
\Phi:L^\infty\left([0,\bar S], W\times A\times \left[-\rho,\rho\right]\right)\to C^0([0,\bar S],\cR\times\cR^n\times\cR\times\cR),
\eeq
which associates to any control the corresponding solution  to \eqref{CPR}, turns out to be  Lipschitz continuous when  one considers the sup-norm over the set of trajectories, and  the distance  $
\tilde d\big((w^0, w, \alpha, \zeta), (\tilde w^0, \tilde w, \tilde \alpha, \tilde{\zeta})\big):={\rm meas}\,\big\{(w^0,w,\alpha, \zeta)(s)\ne(\tilde w^0,\tilde w,\tilde\alpha,\tilde{ \zeta})(s): \ s\in[0,\bar S]\big\}
$
for every pair  $(w^0, w, \alpha, \zeta)$, $(\tilde w^0, \tilde w, \tilde \alpha, \tilde{\zeta})$ of controls.

\subsection{Needle and bracket-like approximations}\label{ParApproximations}

\begin{definition}[Variation generator]
Let us  define the {\rm  set of   variation generators} as 
 $$
\mathfrak{V}:=\left(W\times A\times \left[-\rho,\rho\right]\right)   \bigcup \mathfrak B^0. \  \footnote{We  recall that $W=\{(w^0,w)\in\R_+\times\C: \ w^0+|w|=1\}$  and $\B^0$ is the set of $C^0$-admissible iterated Lie  brackets of length $\ge2$,  as in Def. \ref{DB}. }
$$
 More specifically, any
    $\cbf=(w^0,w,a,\zeta)\in W\times A\times \left[-\rho,\rho\right]$  will be called a {\rm needle variation generator},
   or  a {\rm  variation generator of length $1$}, while  any bracket  $\cbf=B\in\B^0$ of length $h$ $(\geq 2)$ will be called a 
 {\rm bracket-like variation generator of   length   $h$}.\end{definition}

  To every variation generator $\cbf$ and to each instant $\sb\in(0,\bar S)$, we now associate an infinitesimal space-time variation of the reference trajectory $(\bar y^0,\bar y,{\bar y}^\ell, \bar\beta)$,  whose $y$-component  coincides with either a standard needle variation or  a Lie bracket. 
As usual, the needle variations will be considered at  {\em Lebesgue points} of an appropriate associated function as given in next definition.\footnote{Given $F\in L^1([a,b],\cR^N)$,   $s\in (a,b)$ is called a {\it Lebesgue point}  if  ${\ds\lim_{\delta\to 0}}\frac1\delta \int_{s-\delta}^{s+\delta}|F(\sigma)-F(s)|d\sigma =0$. By the Lebesgue Differentiation Theorem, the set
 of Lebesgue points has   measure $b-a$.} 

\begin{definition}\label{Leb}
We will use $(0,\bar S)_{\rm Leb}$  to denote the full measure  subset of  $(0,\bar S)$ consisting of  the Lebesgue points of the function
$s\mapsto \big(\bar w^0(s),\bar F^e(s),\bar\ell^e(s),|\bar w|(s)\big)$,   $s\in[0,\bar S]$.
\end{definition}

\begin{definition}
 \label{DefVariation}
{\rm (Needle variation)}. For every $\sb\in (0,\bar S)_{\rm Leb}$  and  every needle  variation generator $\cbf=(w^0,w,a,\zeta)$,  consider the  vector \bel{needlevar}\ds
\begin{array}{l}

\left(\begin{matrix}\v^0_{\cbf,\sb}\\ \v_{\cbf,\sb}\\  \v^\ell_{\cbf,\sb}\\\v^{\va}_{\cbf,\sb}\end{matrix}\right) :=
\if{\left(\begin{matrix}w^0\\ F^e(\bar y(\sb),w^0,w^i\\ \ell^e(\bar y(\sb),w^0,w, a)\\|w|\end{matrix}\right)  - \left(\begin{matrix}\ds \frac{d\bar y^0}{ds}(\sb)\\\ds  \frac{d\bar y}{ds}(\sb)\\ \ell^e((\bar y,\bar w^0,\bar w, \bar\alpha)(\sb))\\|\bar w(\sb)|
\end{matrix}\right) =\\\\}\fi
\left(\begin{matrix}w^0(1+\zeta)-\bar w^0(\sb)\\ F^e(\bar y(\sb),w^0,w,a)(1+\zeta)- \bar F^e (\sb)\\ \ell^e(\bar y(\sb),w^0,w, a)(1+\zeta)-  \bar\ell^e(\sb)\\ |w|(1+\zeta) - |\bar w(\sb)|
\end{matrix}\right).
 \end{array}
 \eeq

\noindent {\rm (Bracket-like variation)}. For every $\sb\in (0,\bar S)$ 
and every  bracket-like variation generator $\cbf= B\in \B^0$,  one sets
\bel{bracketvar}
\ds\left(\begin{matrix}\v^0_{\cbf,\sb} \\ \v_{\cbf,\sb} \\  \v^\ell_{\cbf,\sb} \end{matrix}\right) :=  
\left(\begin{matrix} 0 \\ \frac{ B(\bar y(\sb))}{r_{_B}^h} \\  0 \end{matrix}\right),
\eeq
where ${r_{_B}}$  is defined as in Subsection \ref{SecNotation}.
\end{definition}
 
 \begin{definition}[Needle approximation]
\label{DefNeedle}
 Let $\cbf=(w^0, w, a,\zeta)$ be  a   needle variation generator  and let  $\bar s\in (0,\bar S)$.  For any control  $(\tilde w^0,\tilde w,\tilde  \alpha,\tilde{\zeta})$ belonging to the set  $L^\infty\left([0,\bar S], W\times A\times\left[-\rho,\rho\right] \right)$,  the family  $\left\{(\tilde w^0,\tilde w,\tilde  \alpha,\tilde{\zeta})_{\cbf,\sb}^\eps:\ \eps\in(0,\bar s)\right\}$,   defined by 
 \be
\label{ControlNeedle}
(\tilde w^0, \tilde w,\tilde \alpha,\tilde{\zeta})_{\cbf,\sb}^\eps(s):=
\left\{
\begin{split}
&(w^0,w,a,\zeta),\quad &\text{if } s\in [\bar s-\eps,\bar s],\\
&(\tilde w^0,\tilde w,\tilde \alpha,\tilde{\zeta})(s),\quad &\text{if } s\in [0,\bar s-\eps)\cup(\bar s, \bar S],
\end{split}
\right.
\eeq
is called a {\em needle control approximation of  $(\tilde w^0,\tilde w,\tilde  \alpha,\tilde{\zeta})$  at $\sb$ associated to $\cbf$.}
 \end{definition}

In order to state Lemma \ref{LemmaDerSingleNeedle} below  --which is a standard result (see e.g. \cite{Pontryagin}) --, for any $ \tilde y:=(y^0,y,y^\ell,\beta)\in \cR\times\cR^n\times\cR\times\cR$ and any $(w^0,w,a)\in W\times A$, let us set
 $$
\tilde F(\tilde y,w^0,w,a):=( w^0 , 
F^e(y,w^0,w,a) ,  \ell^e(y,w^0,w,a) , |w|)
$$
 and    use  $\tilde M(\cdot,\cdot)$   to denote  the {\it fundamental matrix } of the  {\em  variational equation}
\bel{vareq}
 \ds \frac{d \tilde V}{ds}(s)  =  \frac{\partial \tilde F}{\partial \tilde x} (\bar  y^0(s), \bar  y(s), \bar  y^\ell(s), \bar\beta(s),\bar w^0(s),\bar w(s), \bar\alpha(s))\cdot \tilde V(s) , \quad  \text{a.e. }s\in[0,\bar S].
 \eeq
Namely, for each vector $\tilde v:= \left(v^0,v, v^\ell, v^\va\right)\in\cR^{1+n+1+1}$ and each $s_1\in [0,\bar S]$,  the function
 $\tilde V(\cdot) :=\tilde M(\cdot,s_1)\tilde v$ 
is the solution of \eqref{vareq} with initial condition $\tilde V(s_1)= \left(V^0, V, V^\ell, V^\va\right)(s_1)=\tilde v$.
It is straightforward to check that, for all $s\in [0, \bar S]$ one has:
\begin{itemize}
\item $ \tilde M_{0,j}(s,s_1)=\tilde M_{j,0}(s,s_1) =  \delta_{0,j},\quad \text{for }j=0,\dots,n+2,$
\item $\tilde M_{n+2,j}(s,s_1) =M_{j,n+2}(s,s_1) = \delta_{n+2,j},\quad \text{for }j=0,\dots,n+2,$
\item $ \tilde M_{i,r}(s,s_1) =  M_{i,r}(s,s_1),$ for $i,r=1,\ldots,n,$
\item $ \tilde M_{r,n+1}(s,s_1)=\mu_r(s,s_1):= \ds \int_{s_1}^s\sum_{j=1}^n\frac{\partial \ell^e}{\partial x^j}((\bar  y, \bar  w^0, \bar  w, \bar\alpha)(\sigma))\cdot M_{j,r}(s,\sigma)d\sigma,$ for $r=1,\dots,n,$
\item $  \tilde M_{n+1,n+1}(s,s_1)=1,    $
\end{itemize}
where $M(\cdot,\cdot)$ denotes the fundamental matrix of the state-variational equation in $\cR^n$
\bel{vareqn}
 \ds \frac{d V}{ds}(s)  = \frac{\partial F^e}{\partial x}(\bar  y(s), \bar w^0(s), \bar w(s),\bar\alpha(s))\cdot V(s),   \quad \text{a.e.} \  s\in[0,\bar S].
\eeq
 
\begin{lemma}[Asymptotics of needle variations]
 \label{LemmaDerSingleNeedle}
Assume that $\bar s \in (0,\bar S)_{\rm Leb}$.
For every  needle variation generator $\cbf=(w^0,w,a,\zeta)\in W\times A\times\left[-\rho,\rho\right]$   
 and  for every $s\in(\bar s,\bar S]$,  setting $\mu(s,\bar s):=(\mu_1,\dots, \mu_n)(s,\bar s)$ we get
 \begin{equation}
 \left(\begin{matrix} y^{0\eps}(s)-\bar y^0(s)\\y^\eps(s)-\bar y(s)\\y^{\ell\eps}(s)-\bar y^\ell(s)\\\beta^\eps(s)-\bar \beta(s) \end{matrix}\right) =  \eps \tilde M(s,\bar s)\cdot
 \left(\begin{matrix}\v^0_{\cbf,\sb}\\\v_{\cbf,\sb}\\  \v^\ell_{\cbf,\sb}\\\v^{\va}_{\cbf,\sb}\end{matrix}\right)+ o(\eps)  =  \eps 
 \left(\begin{matrix}\v^0_{\cbf,\sb}\\ M(s,\bar s)\cdot\v_{\cbf,\sb}\\  \mu(s,\bar s)\cdot\v_{\cbf,\sb} + \v^\ell_{\cbf,\sb}\\\v^{\va}_{\cbf,\sb}\end{matrix}\right)+ o(\eps),
\end{equation}
where $\Big(y^{0\eps},y^\eps,y^{\ell\eps}, \beta^\eps\Big)$ denotes the solution of   system \eqref{CPR} corresponding to the needle control approximation $(\bar w^0,\bar w,\bar \alpha,0)_{\cbf,\sb}^\eps$   of  $(\bar w^0,\bar w,\bar  \alpha,0)$  at $\sb$ associated to $\cbf$.
 \end{lemma}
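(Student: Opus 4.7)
The plan is to follow the classical three-step needle variation argument, adapted to the current enlarged state $(y^0, y, y^\ell, \beta)$ and to the rescaled dynamics \eqref{CPR}, the only genuinely new features being the Lebesgue point bookkeeping on the perturbation interval and the block structure of the fundamental matrix $\tilde M$.

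First, on $[0, \bar s - \eps]$ the two controls agree, hence $(y^{0\eps}, y^\eps, y^{\ell\eps}, \beta^\eps) \equiv (\bar y^0, \bar y, \bar y^\ell, \bar \beta)$, so the $o(\eps)$ conclusion is trivially satisfied there. Second, on the short interval $[\bar s - \eps, \bar s]$, integrate \eqref{CPR} under the constant control $(w^0, w, a, \zeta)$. For the $y$-component,
\begin{equation*}
y^\eps(\bar s) - \bar y(\bar s) = \int_{\bar s - \eps}^{\bar s}\bigl[F^e(y^\eps(\sigma), w^0, w, a)(1+\zeta) - \bar F^e(\sigma)\bigr]\,d\sigma,
\end{equation*}
and analogously for the other three components. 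Since $\bar F^e$ is bounded by {\bf (Hp)}$^*$, we have $|y^\eps(\sigma) - \bar y(\bar s)| = O(\eps)$ on the integration interval, and uniform continuity of $F^e$ gives $F^e(y^\eps(\sigma), w^0, w, a) = F^e(\bar y(\bar s), w^0, w, a) + o(1)$ as $\eps \to 0$. Combining this with the Lebesgue point property of $\sigma \mapsto (\bar w^0, \bar F^e, \bar \ell^e, |\bar w|)(\sigma)$ at $\bar s$ (recall $\bar s \in (0,\bar S)_{\rm Leb}$ in Definition \ref{Leb}), the integral reduces to $\eps \,\v_{\cbf,\bar s} + o(\eps)$. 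The same reasoning applied componentwise yields
\begin{equation*}
\bigl(y^{0\eps}, y^\eps, y^{\ell\eps}, \beta^\eps\bigr)(\bar s) - \bigl(\bar y^0, \bar y, \bar y^\ell, \bar \beta\bigr)(\bar s) = \eps\,(\v^0_{\cbf,\bar s}, \v_{\cbf,\bar s}, \v^\ell_{\cbf,\bar s}, \v^{\va}_{\cbf,\bar s}) + o(\eps).
\end{equation*}

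Third, on $[\bar s, \bar S]$ both trajectories are driven by the same control $(\bar w^0, \bar w, \bar \alpha, 0)$, so the difference solves an ODE with right-hand side
\begin{equation*}
\tilde F\bigl(\tilde y^\eps, \bar w^0, \bar w, \bar \alpha\bigr) - \tilde F\bigl(\bar{\tilde y}, \bar w^0, \bar w, \bar \alpha\bigr) = \tfrac{\partial \tilde F}{\partial \tilde x}(\bar{\tilde y}, \bar w^0, \bar w, \bar \alpha)\cdot(\tilde y^\eps - \bar{\tilde y}) + o(|\tilde y^\eps - \bar{\tilde y}|),
\end{equation*}
uniformly in $s$ thanks to the $C^1$ regularity of $F^e, \ell^e$ in {\bf (Hp)}$^*$. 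A standard Gronwall argument (combined with the previous step which provides the initial deviation of size $\eps$) then gives
\begin{equation*}
\tilde y^\eps(s) - \bar{\tilde y}(s) = \eps\,\tilde M(s, \bar s)\cdot(\v^0_{\cbf,\bar s}, \v_{\cbf,\bar s}, \v^\ell_{\cbf,\bar s}, \v^{\va}_{\cbf,\bar s})^\top + o(\eps),
\end{equation*}
uniformly on $[\bar s, \bar S]$, which is the first claimed equality.

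Finally, to obtain the explicit form I would verify the block structure of $\tilde M$ listed right before the lemma. Because $\tilde F$ does not depend on $y^0$, $y^\ell$ or $\beta$, the variational matrix $\partial \tilde F / \partial \tilde x$ has nonzero entries only in the columns corresponding to $y$, and moreover only the rows associated with $y$ (giving $\partial F^e / \partial x$) and $y^\ell$ (giving $\partial \ell^e / \partial x$). Hence the $y^0$- and $\beta$-rows of $\tilde V(\cdot) = \tilde M(\cdot, \bar s)\tilde v$ are constant, equal to $\v^0_{\cbf,\bar s}$ and $\v^\va_{\cbf,\bar s}$; the $y$-row solves the state variational equation \eqref{vareqn} and thus equals $M(s, \bar s)\cdot \v_{\cbf,\bar s}$; the $y^\ell$-row is obtained by integrating $(\partial \ell^e / \partial x)\cdot V$ from $\bar s$ to $s$, which by definition of $\mu$ gives $\mu(s,\bar s)\cdot \v_{\cbf,\bar s} + \v^\ell_{\cbf,\bar s}$. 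Substituting these into the matrix expression yields the second equality in the statement. The only delicate step is the Lebesgue point estimate on $[\bar s - \eps, \bar s]$; once it is in place, the rest is routine ODE perturbation theory.
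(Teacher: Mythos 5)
Your proof is correct and is precisely the standard needle-variation argument that the paper itself invokes without proof (it only cites Pontryagin's book, calling the lemma ``a standard result''), and it uses the same ingredients—coincidence before $\bar s-\eps$, the Lebesgue-point estimate on $[\bar s-\eps,\bar s]$, and propagation by the fundamental matrix of the variational equation—that the paper later spells out for the case $h_N=1$ in the proof of Lemma \ref{LemmaMultVariations}. Your concluding verification of the block structure of $\tilde M$ matches the identities listed just before the lemma, so nothing is missing.
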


\vsm
Bracket-like approximations, which  can be performed in various ways   (see e.g. \cite{AgrachevSachkov2013,Kno80,BianchiniStefani1999,BianchiniKawski2003,ChittaroStefani2016} and references therein), are here  based on the following result: 
\begin{lemma}\label{applemma}
Assume {\bf (Hp)$^*$} with $\hat\ell_1(\cdot,0,\cdot)\equiv 0$. 
Fix a point $(\tilde{y}^0,\tilde{y},\tilde{y}^\ell,\tilde\beta)\in \cR\times \cR^n\times \cR\times\cR$ and some $a\in A$.
  For every  Lie bracket $B\in\B^0$ of length $h$, there is $\bar\eps>0$  such that,
for any $s\in(0, \bar\eps^{1/h}]$,  there exists a piecewise constant control
 $(\mathpzc{w}_{\cbf,s}^0,\mathpzc{w}_{\cbf,s})$, with 
  $\mathpzc{w}_{\cbf,s}^0(\sigma) = 0$  for all $\sigma\in [0,s]$, $\mathpzc{w}_{\cbf,s}:[0,s]\to\Big\{\pm{\bf e}_1, \dots, \pm{\bf e}_{m_1}\Big\},$   verifying  
\begin{align}
&(\mathpzc{y}^0, \mathpzc{y}^\ell)(\sigma) = (\tilde{y}^0,\tilde{y}^\ell),\qquad  \boldsymbol\beta(\sigma) = \tilde{\beta}+\sigma,\quad \text{for all } \sigma\in [0,s], \label{yo} \\
 &\mathpzc{y}(s)=\tilde y+\left(\frac{s}{r_{_B}}\right)^hB(\tilde y)+o(s^{h}), \label{yt}
\end{align}
where  $r_{_B}$ is the switch-number introduced in Subsect.\ref{SecNotation} and $(\mathpzc{y}^0,\mathpzc{y},\mathpzc{y}^\ell,\boldsymbol\beta)$ denotes  the solution to the space-time control system in \eqref{CPR} corresponding to the control 
$(\mathpzc{w}_{\cbf,s}^0,\mathpzc{w}_{\cbf,s},a,0)$ \footnote{Note that the choice of the element $a$ is irrelevant.}  and the  initial condition $(\mathpzc{y}^0,\mathpzc{y},\mathpzc{y}^\ell,\boldsymbol\beta)(0) =  (\tilde{y}^0,\tilde{y},\tilde{y}^\ell,\tilde\beta)
$.
\end{lemma}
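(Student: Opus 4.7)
The plan is to induct on the length $h$ of the formal bracket underlying $B$. First, observe that since $\mathpzc{w}_{\cbf,s}^0 \equiv 0$ on $[0,s]$, the $\mathpzc{y}^0$-equation in \eqref{CPR} yields $\mathpzc{y}^0(\sigma) \equiv \tilde y^0$, while the hypothesis $\hat\ell_1(\cdot,0,\cdot)\equiv 0$ forces $\ell^e(y,0,w,a)=\ell_0(y,a)\cdot 0+\hat\ell_1(y,0,w)\equiv 0$, so $\mathpzc{y}^\ell(\sigma)\equiv \tilde y^\ell$. Since $\mathpzc{w}_{\cbf,s}$ takes values in $\{\pm\mathbf{e}_1,\ldots,\pm\mathbf{e}_{m_1}\}$, one has $|\mathpzc{w}_{\cbf,s}|\equiv 1$ and hence $\boldsymbol\beta(\sigma)=\tilde\beta+\sigma$. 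Thus the identities in \eqref{yo} are automatic, and the entire content of the lemma lies in establishing \eqref{yt} for the purely geometric system $d\mathpzc{y}/d\sigma=\sum_{i=1}^{m_1}g_i(\mathpzc{y})\mathpzc{w}_{\cbf,s}^i$ with $w^0\equiv 0$.

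For the base case $h=2$, write $B=[g_i,g_j]$ with $i,j\in\{1,\ldots,m_1\}$, so $r_B=4$. Partition $[0,s]$ into four subintervals of equal length $s/4$ and apply, successively, the constant controls $+\mathbf{e}_i,+\mathbf{e}_j,-\mathbf{e}_i,-\mathbf{e}_j$. Since $B\in\B^0$ forces $g_i,g_j\in C^1$, the classical four-pulse commutator asymptotic formula yields
\[
\mathpzc{y}(s)=\tilde y+\bigl(s/4\bigr)^2[g_i,g_j](\tilde y)+o(s^2)=\tilde y+(s/r_B)^h B(\tilde y)+o(s^h),
\]
which is \eqref{yt}.

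For the inductive step, factor $B=[B_1,B_2]$ with $\mathrm{Lgth}(B_k)=h_k$ and $h=h_1+h_2$. The definition of $\B^0$ implies $B_k\in\B^1$, so the inductive hypothesis applies with one spare degree of regularity and furnishes, for each $k\in\{1,2\}$, a piecewise constant control $\mathpzc{w}_k^+$ on $[0,r_{B_k}\tau]$ whose associated flow sends any admissible base point $z$ to $z+\tau^{h_k}B_k(z)+o(\tau^{h_k})$. Companion controls $\mathpzc{w}_k^-$ producing the opposite displacement $-\tau^{h_k}B_k(z)+o(\tau^{h_k})$ in the same duration are obtained by a suitable sign/order reversal of the pulses of $\mathpzc{w}_k^+$ (remaining piecewise constant with values in $\{\pm\mathbf{e}_i\}$). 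Concatenating as $\mathpzc{w}_1^+,\mathpzc{w}_2^+,\mathpzc{w}_1^-,\mathpzc{w}_2^-$ on the total interval of length $2(r_{B_1}+r_{B_2})\tau=r_B\tau$, a generalized commutator asymptotic formula yields the endpoint
\[
\tilde y+\tau^{h_1+h_2}[B_1,B_2](\tilde y)+o(\tau^h).
\]
Setting $\tau:=s/r_B$ gives \eqref{yt}, with $\bar\eps>0$ chosen so that the inductive constants on $[0,r_{B_k}\tau]$, $k=1,2$, are all in force.

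The main obstacle is controlling the composition of the inductive $o$-remainders through the four-block concatenation so that they combine into a single uniform $o(\tau^h)$ error at each level of the recursion. The regularity bookkeeping of Definition \ref{asindef} is calibrated precisely for this purpose: passing from the two factors $B_k\in\B^1$ to the target $B=[B_1,B_2]\in\B^0$ costs exactly one extra derivative on each leaf, matching $\dime(X_j;b_k)+1=\dime(X_j;b)$, which ensures that the Taylor expansions close at every level. Propagation of the $o$-estimates through the concatenation then reduces to a Gr\"onwall-type argument based on the Lipschitz dependence of flows on initial data, which is available under $\mathbf{(Hp)}^*$.
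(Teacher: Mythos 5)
Your treatment of \eqref{yo} is fine and coincides with the paper's: with $\mathpzc{w}^0_{\cbf,s}\equiv 0$ and $\hat\ell_1(\cdot,0,\cdot)\equiv 0$ the components $\mathpzc{y}^0,\mathpzc{y}^\ell$ are constant, and $|\mathpzc{w}_{\cbf,s}|\equiv1$ gives $\boldsymbol\beta(\sigma)=\tilde\beta+\sigma$. The construction you propose for \eqref{yt} (recursive four-block concatenation $\mathpzc{w}_1^+,\mathpzc{w}_2^+,\mathpzc{w}_1^-,\mathpzc{w}_2^-$, with the scaling matching $r_{_B}=2(r_{_{B_1}}+r_{_{B_2}})$) is indeed the scheme behind the formula, but note that the paper does not reprove it: it simply invokes \cite{FeleqiRampazzo2017} for \eqref{yt}.

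The genuine gap is in your inductive step. From the inductive hypothesis you retain only the endpoint asymptotics $\Phi_k^{\pm}(z)=z\pm\tau^{h_k}B_k(z)+o(\tau^{h_k})$, and you then assert that ``a generalized commutator asymptotic formula'' gives $\Phi_2^-\circ\Phi_1^-\circ\Phi_2^+\circ\Phi_1^+(\tilde y)=\tilde y+\tau^{h_1+h_2}[B_1,B_2](\tilde y)+o(\tau^{h})$. This does not follow: composing maps known only up to errors $o(\tau^{h_1})+o(\tau^{h_2})$ cannot yield a conclusion at the much finer scale $\tau^{h_1+h_2}$, since each individual remainder may dominate the bracket term (already for $h=2$, the exact flows satisfy $\exp(\tau g)(z)=z+\tau g(z)+o(\tau)$, yet the commutator formula requires the second-order terms $\tfrac{\tau^2}{2}Dg\,g$ and their cancellation). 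The cancellations you need require (i) that $\mathpzc{w}_k^-$ generate the \emph{exact inverse} of the flow of $\mathpzc{w}_k^+$ (time reversal of the ODE), used structurally rather than through its endpoint asymptotics, and (ii) an expansion of the approximate flows $\Phi_k^{\pm}$, and of their dependence on the base point, up to order $\tau^{h}$ with remainders controlled uniformly and expressed through moduli of continuity of the derivatives prescribed by the class $C^{b_k+1}$; a Gr\"onwall/Lipschitz-dependence argument only propagates perturbations with a multiplicative constant and cannot produce the cancellation of the intermediate-order terms. Carrying this out at the low regularity of $\B^0$ (only continuity of $B$ itself) is precisely the nontrivial content of \cite{RampazzoSussmann2001,RampazzoSussmann2007,FeleqiRampazzo2017}, so as written your induction restates the statement to be proved rather than proving it; either strengthen the inductive hypothesis to a quantitative expansion of the approximate flows (with uniformity in the base point) or, as the paper does, invoke the cited asymptotic formula directly.
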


\begin{proof} While the first relation in  \eqref{yo} is trivial, in that  $\mathpzc{w}_{\cbf,s}^0\equiv 0$ and  the Lagrangian 
 $\ell^e(y,a, \mathpzc{w}^0_{\cbf,s},\mathpzc{w}_{\cbf,s}) = \ell_0(y,a)\mathpzc{w}^0_{\cbf,s}\equiv 0,
 $
a proof of \eqref{yt} can be found in  \cite{FeleqiRampazzo2017}. Finally, the second  relation in \eqref{yo} is trivial as well, for $|\mathpzc{w}_{\cbf,s}| \equiv 1$ on $[0,s]$.
\end{proof}

\begin{definition}[Bracket-like approximation]
\label{DefImpulsive}
 Fix   $\sb\in (0,\bar S)$ and let   $\cbf=  B\in \B^0$ be
a  bracket-like variation generator  of length $h$. For each $\eps>0$  such that $\eps<\bar\eps$ and
$ 2\eps^{1/h} < \bar s$, where $\bar\eps$ is as in Lemma \ref{applemma}, 
 consider the dilation
\be
\label{gammaeps} 
\begin{split}
 &\gamma^\eps\colon [\bar {s}-2\eps^{1/h},\sb-\eps^{1/h}]\to [\bar {s}-2\eps^{1/h},\sb] \\
 &\gamma^\eps(\sigma):= (\bar {s}-2\eps^{1/h}) +2\big(\sigma-(\bar {s}-2\eps^{1/h})\big).
\end{split} 
\ee
For any control  $(\tilde w^0,\tilde w,\tilde  \alpha,\tilde\zeta)\in L^\infty\left([0,\bar S], W\times A\times\left[-\rho,\rho\right] \right)$, let us set
\be
\label{ControlB}
(\tilde w^0,\tilde w, \tilde\alpha,\tilde\zeta)_{\cbf,\sb}^\eps (s):=  
\left\{
\begin{split}
&\ds\Big(2\,\tilde w^0,2\,\tilde w ,\tilde \alpha,\tilde\zeta \Big) \circ \gamma^\eps(s),\quad \text{if } s\in [{\sb}-2\eps^{1/h},{\sb}-\eps^{1/h}),\\
&\Big(0,\mathpzc{w}_{\cbf,\eps^{1/h}}(s-(\sb-\eps^{1/h})),a\Big),\quad \text{if } s\in [{\sb}-\eps^{1/h},{\sb}],
 \,\,\\
&\big(\tilde w^0,\tilde w,\tilde \alpha,\tilde\zeta \big)(s),\quad  \text{if } s\in [0,{\sb}-2\eps^{1/h})\cup ({\sb},S],
\end{split}
\right.
\ee
where  $a\in A$  is arbitrary and $\mathpzc{w}_{\cbf,\eps^{1/h}}$ is as in Lemma \ref{applemma}.  
 We refer to  the family of controls $\big\{(\tilde w^0,\tilde w, \tilde  \alpha,\tilde\zeta)_{\cbf,\sb}^\eps:\eps\in(0,\bar\eps),\,2\eps^{1/h} < \bar s\big\}$ as a {\em bracket-like control approximation  of  $(\tilde w^0,\tilde w,\tilde \alpha,\tilde\zeta)$  at $\sb$  associated to $\cbf=B$}. 
 
\end{definition}

\begin{lemma}[Asymptotics   of bracket-like variations]
\label{DerSingleImp}
Let us consider a bracket-like variation generator $\cbf=  B\in \B^0$,  with $B$   of length $h$. For every    point $\sb\in (0,\bar S)$ and  for each $\eps>0$ as in Def. \ref{DefImpulsive},
 let   $(\bar w^0,\bar w,\bar \alpha,0)_{\cbf,\sb}^\eps$ be a bracket-like control approximation  of $(\bar w^0,\bar w,\bar \alpha, 0)$  at $\sb$  associated to $\cbf=B$, and let  $(y^{0\eps},y^\eps, y^{\ell\eps}, \beta^\eps)$ be the corresponding solution of system \eqref{CPR}.
Then,  for every $s\in(\bar s,\bar S]$ one has 
 {\small $$
 \left(\begin{matrix} y^{0\eps}(s)-\bar y^0(s)\\y^\eps(s)-\bar y(s)\\y^{\ell\eps}(s)-\bar y^\ell(s) \end{matrix}\right) =  \eps 
 \left(\begin{matrix}\v^0_{\cbf,\sb}\\ M(s,\bar s)\cdot\v_{\cbf,\sb}\\  \mu(s,\bar s)\cdot\v_{\cbf,\sb} + \v^\ell_{\cbf,\sb} \end{matrix}\right)+ \left(\begin{matrix} 0 \\ o(\eps) \\ o(\eps)  \end{matrix}\right)=\eps \left(\begin{matrix}\ 0\\ M(s,\bar s)\cdot \ds\frac{ B(\bar y(\sb))}{({r_{_B}})^h} \\  \mu(s,\bar s)\cdot \ds \frac{ B(\bar y(\sb))}{({r_{_B}})^h} \end{matrix}\right)+  \left(\begin{matrix} 0 \\ o(\eps) \\ o(\eps)  \end{matrix}\right),
$$}
and
$
\beta^\eps(s) - \bar\beta(s) =\eps^{\frac 1h}.  
$
\end{lemma}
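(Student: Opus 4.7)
The plan is to partition the interval $[0,\bar S]$ according to the four pieces in \eqref{ControlB} and propagate $(y^{0\eps},y^\eps,y^{\ell\eps},\beta^\eps)$ across each in turn, the only delicate step being the passage from the jump at time $\bar s$ to a generic $s\in(\bar s,\bar S]$ via the variational flow. On $[0,\bar s-2\eps^{1/h})$ the perturbed and reference controls coincide, so the trajectories coincide; the real work begins at $\bar s-2\eps^{1/h}$ with the initial data $(\bar y^0,\bar y,\bar y^\ell,\bar\beta)(\bar s-2\eps^{1/h})$.

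On the dilation interval $[\bar s-2\eps^{1/h},\bar s-\eps^{1/h})$ I would perform the change of variable $\sigma=\gamma^\eps(s)$. Since $F^e$ is linear and $\ell^e$ is positively $1$-homogeneous in $(w^0,w)$, and $d\gamma^\eps/ds\equiv 2$, the chain rule shows that $\sigma\mapsto(y^{0\eps},y^\eps,y^{\ell\eps},\beta^\eps)\circ(\gamma^\eps)^{-1}(\sigma)$ solves system \eqref{CPR} with $\zeta\equiv 0$ on $[\bar s-2\eps^{1/h},\bar s]$, starting from the same datum as $(\bar y^0,\bar y,\bar y^\ell,\bar\beta)$; by uniqueness it \emph{coincides} with $(\bar y^0,\bar y,\bar y^\ell,\bar\beta)$ there. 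Evaluating at $\sigma=\bar s$, i.e.\ at $s=\bar s-\eps^{1/h}$, yields the crucial equalities $y^{0\eps}=\bar y^0(\bar s)$, $y^\eps=\bar y(\bar s)$, $y^{\ell\eps}=\bar y^\ell(\bar s)$, $\beta^\eps=\bar\beta(\bar s)$: the dilation exactly resets the trajectory onto the reference at time $\bar s$, just in time for the bracket insertion.

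On the bracket interval $[\bar s-\eps^{1/h},\bar s]$ I apply Lemma~\ref{applemma} with parameter $\eps^{1/h}$ to the initial data just obtained. Because $\mathpzc w^0_{\cbf,\eps^{1/h}}\equiv 0$ and $\hat\ell_1(\cdot,0,\cdot)\equiv 0$, Lemma~\ref{applemma} gives $y^{0\eps}(\bar s)=\bar y^0(\bar s)$, $y^{\ell\eps}(\bar s)=\bar y^\ell(\bar s)$, $\beta^\eps(\bar s)=\bar\beta(\bar s)+\eps^{1/h}$, and
\[
y^\eps(\bar s)=\bar y(\bar s)+\bigl(\eps^{1/h}/r_{_B}\bigr)^{h}B(\bar y(\bar s))+o(\eps)=\bar y(\bar s)+\eps\,\v_{\cbf,\bar s}+o(\eps).
\]
Thus the total perturbation of the state at time $\bar s$ is $\delta_\eps=(0,\eps\,\v_{\cbf,\bar s}+o(\eps),0,\eps^{1/h})$.

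Finally, on $(\bar s,\bar S]$ both controls revert to $(\bar w^0,\bar w,\bar\alpha,0)$, so the deviation is transported by the reference flow. Applying the standard first-order expansion with the fundamental matrix $\tilde M(s,\bar s)$ of \eqref{vareq} to $\delta_\eps$ — and invoking the block structure recalled just before Lemma~\ref{LemmaDerSingleNeedle} — gives the claimed formula; the $0$ in the $y^{0\eps}$ slot and the exact $\eps^{1/h}$ in the $\beta^\eps$ slot propagate unchanged because $y^0$ and $\beta$ never appear on the right-hand side of \eqref{CPR}. The main obstacle I expect is retaining a genuine $o(\eps)$ error: the dilation step must be treated by an exact change of variable, not by linearization (a naive Grönwall bound on an interval of length $\eps^{1/h}$ would only produce an $O(\eps^{1/h})$ error, spoiling the asymptotics), and the telescoping of $\beta^\eps-\bar\beta$ across the three middle pieces must be carried out exactly in order to yield the sharp equality $\beta^\eps(s)-\bar\beta(s)=\eps^{1/h}$ rather than an asymptotic estimate.
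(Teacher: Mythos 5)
Your proposal is correct and follows essentially the same route as the paper's proof: reset the trajectory onto the reference at $\bar s$ via the dilation (you justify this by homogeneity, chain rule and uniqueness, where the paper simply invokes rate-independence of \eqref{CPR}), then apply Lemma~\ref{applemma} at $(\bar y^0,\bar y,\bar y^\ell,\bar\beta)(\bar s)$ with parameter $\eps^{1/h}$, and finally propagate the $O(\eps)$ state perturbation by the differential of the flow, i.e.\ $\tilde M(s,\bar s)$, while treating $y^0$ and $\beta$ exactly through the decoupling of \eqref{CPR}. Your closing remarks on avoiding a Gr\"onwall-type $O(\eps^{1/h})$ error and on the exact bookkeeping of $\beta$ match precisely the points the paper's proof relies on.
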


\begin{proof} 
 By the rate-independence of the   control system \eqref{CPR}, 
  $(y^{0\eps},y^\eps, y^{\ell\eps}) =(\bar y^0, \bar y, \bar y^\ell) \circ \gamma^\eps$  on $[\bar s-2\eps^{1/h},\bar s-\eps^{1/h}]$, so that 
$(y^{0\eps},y^\eps, y^{\ell\eps})(\bar s-\eps^{1/h})=(\bar y^0, \bar y, \bar y^\ell)(\bar s).$
Hence
$y^{0\eps}(\bar s)-\bar y^0(\bar s)=0$,  $y^{\ell\eps}(\bar s)-\bar y^\ell(\bar s)=0$,
while
\be
\label{dyepsimpulsive1}
\begin{split}
 \ds y^\eps(\sb)-\bar y(\sb) =&
 \int_{\sb-\eps^{1/h}}^{\sb} \sum_{i=1}^m g_i(y^\eps(s)) \mathpzc{w}_{\cbf,\eps^{1/h}}^i (s-(\sb-\eps^{1/h}))\, \dd s\\  =&\int_{0}^{\eps^{1/h}} \sum_{i=1}^m g_i(y^\eps(s+(\sb-\eps^{1/h})) \mathpzc{w}_{\cbf,\eps^{1/h}}^i (s) \,\dd s,
\end{split}
\ee
where $\mathpzc{w}_{\cbf,\eps^{1/h}}$ is the control associated to the bracket  $B$ as  in Lemma  \ref{applemma}.  
It follows that $y^\eps(\sb)=y^\eps \big( s+(\sb-\eps^{1/h}) \big){\Big|_{s=\eps^{1/h}}} ={\mathpzc{y}^\eps(\eps^{1/h})}$, where we have used $\mathpzc{y}^\eps$   to denote the solution to the Cauchy problem 
 $
\displaystyle\frac{dy}{d\sigma}(\sigma)=\sum_{i=1}^{m} g_i(y(\sigma))\mathpzc{w}^i_{\cbf,\eps^{1/h}}(\sigma)$, $y(0)=\yb(\sb),
 $ 
so that, by  Lemma  \ref{applemma}, we get
$$
  y^\eps(\sb)-\bar y(\sb)-  \left(\frac{\eps^{1/h}}{r_{_B}}\right)^h B(\bar y(\sb)) = 
 {\mathpzc{y}^\eps(\eps^{1/h})}
- \yb(\sb) - \eps\, \frac{ B(\bar y(\sb))}{({r_{_B}})^h} =o(\eps).
$$
 Therefore,
$y^\eps(\sb)-\bar y(\sb)=\eps\, \ds \frac{ B(\bar y(\sb))}{({r_{_B}})^h} + o(\eps)$,  and the proof of the first relation of the thesis is concluded, 
since for every $s\in(\bar s,\bar S]$, the fundamental matrix  $\tilde M(s,\bar s)$ is the differential of the flow map from $\bar s$ to $s$. 
Finally, by the second relation in \eqref{yo}, one has 
$\beta^\eps(s) - \bar\beta(s) =\beta^\eps(\bar s) - \bar\beta(\bar s)
 =  \eps^{\frac 1h}.$ 
\end{proof}
 
\if{ 
\sol{ Stavo pensando se era
veramente necessario stringere il controllo prima fare la impulsive
variation e mi pare di si.
\\
Di fatto, se non stringiamo il controllo ci viene in \eqref{dyepsimpulsive1} il limite
$$
\frac{1}{\eps} \big[ \yb(\sb-\eps^{1/h}) - \yb(\sb) \big],
$$
dove mi viene di usare la Lipschitzianit\`a della traiettoria
$\yb(\cdot)$ per ottenere
$$
\left|\frac{1}{\eps} [ \yb(\sb-\eps^{1/h}) - \yb(\sb) ]\right| \leq K
\frac{\eps^{1/h}}{\eps} \to \infty !!
$$
Siete d'accordo?\\
Mi \`e venuto il dubbio dopo avere parlato con Sussmann, che non vedeva
}}\fi


\subsection{Composition of variations}\label{ParCompositions}
\vsm

Let   $\cbf\in \mathfrak{V}$ be a variation  generator  of length $h\ge1$,  and let $\sb\in (0,\bar S)$.    
For any $\eps>0$ small enough \footnote{Precisely, if $h\ge2$, we require $0<\eps < \bar\eps$, $2\eps^{1/h} < \bar s$, as in Def. \ref{DefImpulsive}, while,  in case $h=1$,  $\eps<\bar s$.}, let us introduce the operator
 $\mathcal{A}_{\cbf,\sb}^\eps: L^\infty\left([0,\bar S],\cR_+\times \C \times A\times\left[-\rho,\rho\right]\right) \to L^\infty\left([0,\bar S],\cR_+\times \C \times A\times\left[-\rho,\rho\right]\right)$  given by
\be
\mathcal{A}_{\cbf,\sb}^\eps  ( w^0,w,\alpha,\zeta):=( w^0, w, \alpha,\zeta)_{\cbf,\sb}^\eps,
\ee 
 \begin{lemma}[Multiple variations at different times]
\label{LemmaMultVariations}
Let   $N>0$  be an integer and  let $\vecc:=(\cbf_1,\dots,\cbf_N)  \in \mathfrak{V}^{N}$ be an $N$-uple of variations of lengths  $\vech:= (h_1,\dots,h_N)\in \cN^{N}$. Fix  $\vecs:=(\sb_1,\dots,\sb_N) \in (0,\bar S)
^N$, where $0=:\bar s_0<\bar s_1<\dots<\bar s_N<\bar S$ and $\sb_j\in (0,\bar S)_{\rm Leb}$ as soon as $h_j = 1$. For each
$\veceps:=(\eps_1,\dots,\eps_N)\in (0,+\infty)^N$ small enough,  let us set
\be
(w^{0\veceps},w^{\veceps} ,\alpha^{\veceps},\zeta^{\veceps}) :=
\A_{\cbf_N,\bar s_{N}}^{\eps_N} \circ \dots  \circ \A_{\cbf_j, \bar s_{j}}^{\eps_j} \circ \dots \circ \A_{\cbf_1, \bar s_{1}}^{\eps_1} (\bar w^0,\bar w,\bar\alpha,0
),
\ee
and let $\big( \bar S,w^{0\veceps},w^{\veceps} ,\alpha^{\veceps},\zeta^{\veceps}, y^{0\veceps},y^{\veceps},y^{\ell\veceps},\beta^{\veceps}\big) $  denote  the corresponding process of \eqref{CPR}.  

Then, for every $s\in(\sb_N,\bar S]$, one has
\be
\label{MainDerFormula}
\left(\begin{matrix}y^{0\veceps}(s)-  \bar y^0(s)\\  y^{\veceps}(s)-\bar y(s)\\ y^{\ell\veceps}(s)-  \bar y^\ell(s)\end{matrix}\right) = \mathlarger{ \mathlarger{ \sum}}_{j=1}^N\,\, \eps_j\left( \begin{matrix} \v^0_{\cbf_j,\sb_j}\\     M(s,\sb_j) \v_{\cbf_j,\sb_j}\\ \mu(s,\sb_j)\cdot\v_{\cbf_j,\sb_j}+\v^\ell_{\cbf_j,\sb_j}\end{matrix} \right)  +o(|\veceps|),
\ee
and
\be
\label{MainDerFormulak}
\beta^{\veceps}(s)-\bar\beta(s)=\sum_{j\in I_1} \eps_{j}\left(
|w_j|(1+\zeta_j) - |\bar w(\bar s_j)|
 \right)  +o(|\veceps|) + \sum_{j\in  \{1,\dots,N\}\setminus I_1} (\eps_{j})^{\frac{1}{h_j}},
\ee 
where   $I_1:=\{j=1,\dots,N:   \ h_j=1\}$.  In particular, if all $\cbf_j$ are needle variations, i.e. $\cbf_j:=(w^0_j,w_j,a_j, \zeta_j)$ for every $j=1,\ldots,N$,
one gets
\be
\label{MainDerFormula1}
\beta^{\veceps}(s)-\bar\beta(s)={\sum_{j=1}^N} \eps_j\big(
|w_j|(1+\zeta_j)  - |\bar w(\bar s_j)|\big)  +o(|\veceps|).
\ee

\if{
where  
\begin{multline}\label{L55}
\Delta_j(\bar s_j):= \\
\left\{
\begin{split}
& f(\bar y(\bar s_j) ,a_j)w^0_j + \sum_{i=1}^m g_{i}(\yb(\bar s_j)) w^{i}_j  -f(\bar y(\bar s_j),\bar \alpha(\bar s_j)) \frac{d\bar\varphi^0}{ds}(\bar s_j) - \sum_{i=1}^m g_{i}(\bar y(\bar s_j))\frac{d\bar \varphi^i}{ds}(\bar s_j),  \ \text{if } j\in I, \\
&\frac{1}{r^{h_j}}B_j(\yb(\bar s_j)), \  \text{if  } j\notin I.
\end{split}
\right.
\end{multline}
Here $r=r(B_j)$ is as in Definition   \ref{DefVariation}.
}\fi
\end{lemma}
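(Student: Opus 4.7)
My plan is to argue by induction on the number $N$ of variations. The base case $N=1$ is already settled: the $(y^0,y,y^\ell)$-asymptotics follow from Lemma \ref{LemmaDerSingleNeedle} when $\cbf_1$ is a needle generator and from Lemma \ref{DerSingleImp} when $\cbf_1$ is a bracket-like generator, while the corresponding $\beta$-formulas can be read off directly from the controls constructed in Definitions \ref{DefNeedle} and \ref{DefImpulsive}.

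For the inductive step, I would set $(\tilde w^0,\tilde w,\tilde\alpha,\tilde\zeta)^{\veceps'} := \A_{\cbf_{N-1},\sb_{N-1}}^{\eps_{N-1}} \circ \cdots \circ \A_{\cbf_1,\sb_1}^{\eps_1}(\bar w^0,\bar w,\bar\alpha,0)$, with $\veceps' := (\eps_1,\dots,\eps_{N-1})$, and denote by $(\tilde y^0,\tilde y,\tilde y^\ell,\tilde\beta)^{\veceps'}$ the corresponding trajectory of \eqref{CPR}. By the inductive hypothesis, \eqref{MainDerFormula} with $N$ replaced by $N-1$ holds for all $s \in (\sb_{N-1},\bar S]$. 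Since $0 < \sb_1 < \cdots < \sb_N$ are fixed and the variation intervals shrink to points as $\veceps \to 0$, for $|\veceps|$ small enough the interval on which $\A_{\cbf_N,\sb_N}^{\eps_N}$ acts is disjoint from all the preceding ones, so $(\tilde w^0,\tilde w,\tilde\alpha,\tilde\zeta)^{\veceps'}$ coincides there with $(\bar w^0,\bar w,\bar\alpha,0)$. I would then repeat the single-variation analysis of Lemmas \ref{LemmaDerSingleNeedle} and \ref{DerSingleImp}, but starting from the perturbed initial value $(\tilde y^0,\tilde y,\tilde y^\ell)^{\veceps'}(\sb_N-\tau_N)$ (where $\tau_N = \eps_N$ or $2\eps_N^{1/h_N}$) instead of $(\bar y^0,\bar y,\bar y^\ell)(\sb_N-\tau_N)$. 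The $C^1$-regularity granted by {\bf (Hp)$^*$} implies that evaluating $\v_{\cbf_N,\sb_N}$ or $B_N(\cdot)$ at the perturbed state introduces only an $O(|\veceps'|)$ error, which after multiplication by $\eps_N$ is $O(\eps_N |\veceps'|) = o(|\veceps|)$ and hence absorbed. Propagating the combined perturbation from $\sb_N$ to $s$ through the state-variational equation \eqref{vareqn} and invoking the semigroup identity $M(s,\sb_j) = M(s,\sb_N)M(\sb_N,\sb_j)$ (and its analogue for $\mu$), the accumulated contributions assemble into \eqref{MainDerFormula}.

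For the $\beta$-component I would work directly, using that $d\beta/ds = |w|(1+\zeta)$ is decoupled from $(y^0,y,y^\ell,\alpha)$: the increment $\beta^\veceps(s) - \bar\beta(s)$ splits as a sum over the $N$ pairwise disjoint variation intervals. On a needle interval $[\sb_j-\eps_j,\sb_j]$ the Lebesgue-point property of $s \mapsto |\bar w(s)|$ at $\sb_j$ (recall Definition \ref{Leb}) yields a net contribution of $\eps_j\bigl(|w_j|(1+\zeta_j) - |\bar w(\sb_j)|\bigr) + o(\eps_j)$. On a bracket-like interval $[\sb_j - 2\eps_j^{1/h_j}, \sb_j]$, the rate-independence argument already used in the proof of Lemma \ref{DerSingleImp} (change of variables via $\gamma^\eps$ on the first half together with $|\mathpzc{w}_{\cbf,\eps^{1/h_j}}| \equiv 1$ on the second half) makes the net contribution exactly $\eps_j^{1/h_j}$. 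Summing over $j$ gives \eqref{MainDerFormulak}, and \eqref{MainDerFormula1} is the specialization to $I_1 = \{1,\dots,N\}$.

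The main obstacle I foresee lies in the bookkeeping of the remainders during the inductive step: a single application of $\A_{\cbf_N,\sb_N}^{\eps_N}$ superposes an $O(|\veceps'|)$ perturbation of the background trajectory on top of an $\eps_N$-sized needle or bracket-like deviation, and one must keep all cross-terms within $o(|\veceps|)$. This relies on the uniform boundedness of the $g_i$, of $f$, of their first derivatives and of all brackets in $\B^0$ provided by {\bf (Hp)$^*$}, together with the Lipschitz continuity of the input-output map $\Phi$ of \eqref{ium} with respect to the measure-of-mismatch distance $\tilde d$. Once this uniform control is established, the linearity of the variational equations \eqref{vareq}, \eqref{vareqn} together with the semigroup property of $M$ and $\mu$ does the rest.
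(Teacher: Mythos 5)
Your proposal follows essentially the same route as the paper: induction on $N$, writing the $N$-th variation on top of the trajectory generated by the first $N-1$ ones, re-running the single-variation asymptotics of Lemmas \ref{LemmaDerSingleNeedle} and \ref{DerSingleImp} from the perturbed state, and then transporting everything to time $s$ via the fundamental matrices $M$ and $\mu$; the direct, interval-by-interval treatment of the decoupled $\beta$-equation is also how the paper obtains \eqref{MainDerFormulak}--\eqref{MainDerFormula1}.

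One justification, however, needs repair. In the bracket-like case you claim that ``$C^1$-regularity granted by {\bf (Hp)$^*$}'' makes the error from evaluating $B_N$ at the perturbed state $O(|\veceps'|)$. This is not available: elements of $\B^0$ are $C^0$-admissible brackets, and {\bf (Hp)$^*$} only guarantees that they are \emph{uniformly continuous} and bounded (the component vector fields have exactly as many derivatives as the bracket consumes, so $B_N$ itself need not be Lipschitz, let alone $C^1$). The correct estimate, and the one the paper uses, is $|B_N(y^{N-1}(\sb_N))-B_N(\bar y(\sb_N))|\le \omega_{B_N}\big(\mathcal{O}(\eps_1+\dots+\eps_{N-1})\big)$ with $\omega_{B_N}$ the modulus of continuity; after multiplication by $\eps_N/(r_{_{B_N}})^{h_N}$ this is still $o(|\veceps|)$, since $\eps_N\le|\veceps|$ and $\omega_{B_N}(\mathcal{O}(|\veceps|))\to 0$. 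So your conclusion stands, but only through this weaker continuity argument --- a point worth being precise about, since working with merely continuous brackets is exactly the low-regularity feature the paper is after. For the needle case your Lipschitz reasoning is fine, because there the relevant maps are built from $f$, $g_i$ and $\ell^e$, which do have bounded $x$-derivatives under {\bf (Hp)$^*$}, and the Lebesgue-point property of $\sb_N$ (which you invoke explicitly only for $\beta$) is also what controls the drift mismatch term in the state estimate, as in the paper's $r_3$.
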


\begin{proof}
Let us prove the result by induction on $N,$ the number of composed variations. 
For $N=1,$ the result is proved in  Lemmas \ref{LemmaDerSingleNeedle} and \ref{DerSingleImp}. 
  If  $N\geq 2,$ let us assume  that the result holds true for $N-1$ and let us show that it is valid for $N$ as well.
Let us use $(y^0,y, y^\ell, \beta)^{N}$ and  $(y^0,y,  y^\ell, \beta)^{N-1}$  to denote   the trajectories associated to the $N$ variations and to the first $N-1$ variations, respectively (we  omit  the dependence on $\veceps$ for brevity). Then one has  
\begin{small}
\begin{equation}\label{ind0}
\begin{array}{l}
\left(\begin{matrix}y^{0,N}(\sb_{N})-  \bar y^0(\sb_{N})\\  y^{N}(\sb_{N})-\bar y(\sb_{N})\\ y^{\ell,N}(\sb_{N})-  \bar y^\ell(\sb_{N}) \\\beta^N(\sb_{N})-\bar\beta(\sb_{N})\end{matrix}\right) =\left(\begin{matrix}y^{0,N}(\sb_{N})-  y^{0,(N-1)}(\sb_{N})\\  y^{N}(\sb_{N})-y^{N-1}(\sb_{N})\\ y^{\ell,N}(\sb_{N})-  y^{\ell,(N-1)}(\sb_{N}) \\ \beta^N(\sb_{N})-\beta^{N-1}(\sb_{N}) \end{matrix}\right) +
\left(\begin{matrix}y^{0,(N-1)}(\sb_{N})- \bar y^0(\sb_{N})\\ y^{N-1}(\sb_{N})- \bar y(\sb_{N})\\ y^{\ell,(N-1)}(\sb_{N}) -  \bar y^\ell(\sb_{N})\\ \beta^{N-1}(\sb_{N}) -\bar\beta(\sb_{N}) \end{matrix}\right).   
\end{array}
\end{equation}
\end{small}
By  the inductive hypothesis, we get that
\bel{ind}
\left(\begin{matrix}y^{0,(N-1)}(\sb_{N})-  \bar y^0(\sb_{N})\\  y^{N-1}(\sb_{N})-\bar y(\sb_{N})\\ y^{\ell,(N-1)}(\sb_{N})-  \bar y^\ell(\sb_{N})\end{matrix}\right) = \mathlarger{ \sum }_{j=1}^{N-1}\,\, \eps_j\left( \begin{matrix} \v^0_{\cbf_j,\sb_j}\\     M(\sb_{N},\sb_j) \v_{\cbf_j,\sb_j}\\ \mu (\sb_{N},\sb_j) \v_{\cbf_j,\sb_j}+\v^\ell_{\cbf_j,\sb_j}\end{matrix} \right)  +o(|(\eps_1,\dots,\eps_{N-1})|),
\eeq
and, setting  $I^{N-1}_1:=\{j=1,\dots,N-1:   \ h_j=1\}$, 
\bel{indb}\begin{array}{l}
\ds\beta^{N-1} (\sb_{N})-\bar\beta (\sb_{N})
=\sum_{j\in I^{N-1}_1} \eps_{j}\left(
|w_j|(1+\zeta_j) - |\bar w (\sb_{j})|
 \right) \\\qquad\qquad\qquad\qquad\qquad \ds+o(|(\eps_1,\dots,\eps_{N-1})|) + \sum_{j\in \{1,\dots,N-1\}\setminus I^{N-1}_1}(\eps_{j})^{\frac{1}{h_j}}. \end{array}
\eeq
We  claim that
\bel{th}
\left(\begin{matrix}y^{0,N}(\sb_{N})-  y^{0,(N-1)}(\sb_{N})\\  y^{N}(\sb_{N})-y^{N-1}(\sb_{N})\\ y^{\ell,N}(\sb_{N})-  y^{\ell,(N-1)}(\sb_{N}) \end{matrix}\right)  = \eps_N
 \left(\begin{matrix}\v^0_{\cbf_N,\sb_N}\\ \v_{\cbf_N,\sb_N}\\  \v^\ell_{\cbf_N,\sb_N}\end{matrix}\right)+o(|\veceps|),
\eeq
and
\bel{th2}
\beta^N(\sb_{N})-\beta^{N-1}(\sb_{N})=\left\{\begin{array}{l}  \eps_{N}\big(|w_N|(1+\zeta_N) - |\bar w (\sb_{N})|\big)  +o(|\eps_{N}|),  \quad\text{if $h_N=1$,} \\ \ds(\eps_{N})^{\frac{1}{h_N}},  \qquad\text{if $h_N\ge 2$}.
 \end{array}\right.
\eeq 
 Once one has  proven the claim,  the validity of \eqref{MainDerFormula} and \eqref{MainDerFormulak} follows easily by \eqref{ind0}-\eqref{indb},  by the properties of the fundamental matrix  $\tilde M(s,\bar s_N)$.
To prove \eqref{th},  \eqref{th2}  we first consider the case when the length $h_N$ of   the $N$th variation $\cbf_N$  is  $\geq 2$.
 
    {\em Case $h_N\ge2$.}   Here  $\cbf_N=B_N\in\B^0$  is a bracket-like variation and  one has
$$
\left(\begin{matrix}y^{0,N}\\  y^N\\ y^{\ell,N} \\ \beta^N \end{matrix}\right)( \sb_{N}-\eps_{N}^{1/h_{N}})=\left(\begin{matrix}y^{0,N-1}\\  y^{N-1}\\ y^{\ell,N-1} \\ \beta^{N-1} \end{matrix}\right)(\sb_{N}),
$$
so that  
$y^{0,N}(\sb_{N})-  y^{0,N-1}(\sb_{N})=0$,  $y^{\ell,N}(\sb_{N})-  y^{\ell,N-1}(\sb_{N})=0$,  $\beta^N(\sb_{N})- \beta^{N-1}(\sb_{N})=\eps_{N}^{1/h_{N}}$,
while
 \begin{equation*}
\begin{split}
 y ^{N}  (\sb_{N}) -    y ^{N-1}(\sb_{N})  =& \int_{\sb_{N}-\eps_{N}^{1/h_{N}}}^{\sb_{N}}  \sum g_i(y^{N}(s)) \mathpzc{w}_{\cbf_N,\eps_{N}^{1/h_{N}}}^i \Big(s-\big( \sb_{N}-\eps_{N}^{1/h_{N}} \big) \Big)  \dd s  \\
 =&\int_{0}^{\eps^{1/h}} \sum_{i=1}^m g_i(y^N(s+(\sb_{N}-\eps_N^{1/h_N})) \mathpzc{w}_{\cbf_N,\eps_N^{1/h_N}}^i (s) \,\dd s,
\end{split}
\end{equation*}
where the control $ \mathpzc{w}_{\cbf_N,\eps_{N}^{1/h_{N}}} $ is as in Lemma  \ref{applemma}.
 If   $\mathpzc{y}^N$    denotes the solution to the Cauchy problem 
 $
\displaystyle\frac{dy}{d\sigma}(\sigma)=\sum_{i=1}^{m} g_i(y(\sigma)) \,\mathpzc{w}^i_{\cbf_N,\eps_N^{1/h_N}}(\sigma)$,  $y(0)= y ^{N-1}(\sb_{N}),$ 
then $y^N(\sb_{N})=y^N\big( s+(\sb_{N}-\eps_N^{1/h_N}) \big){\Big|_{s=\eps_N^{1/h_N}}} ={\mathpzc{y}^N(\eps_N^{1/h_N})}$ and, by  Lemma  \ref{applemma}, we get
 \begin{equation*}
\begin{split}
  y^N( \sb_{N})- y^{N-1}(\sb_{N})-   &\left(\frac{\eps_N^{1/h_N}}{r_{_{B_N}}}\right)^{h_N} B_N(\bar y(\sb_N))  \\
  &={\mathpzc{y}^N(\eps_N^{1/h_N})}
- y^{N-1}(\sb_{N}) -  \frac{\eps_N  }{(r_{_{B_N}})^{h_N}}  B_N(y^{N-1}(\sb_{N}))  \\
&\qquad+  \frac{\eps_N  }{(r_{_{B_N}})^{h_N}}  B_N(y^{N-1}(\sb_{N})) - \frac{\eps_N  }{(r_{_{B_N}})^{h_N}}  B_N(\bar y(\sb_N)) \\ 
 &= o(\eps_{N})+   \frac{\eps_N  }{(r_{_{B_N}})^{h_N}}  B_N(y^{N-1}(\sb_{N})) - \frac{\eps_N  }{(r_{_{B_N}})^{h_N}}  B_N(\bar y(\sb_N)).
\end{split}
\end{equation*}
Now by  the continuity of $B_N$ and the inductive hypothesis \eqref{ind}, it follows that
\begin{multline*}
\displaystyle \left| \frac{\eps_N  }{(r_{_{B_N}})^{h_N}}  B_N(y^{N-1}(\sb_{N})) -   \frac{\eps_N  }{(r_{_{B_N}})^{h_N}}  B_N(\bar y(\sb_N)) \right| \\
  \ds\le \frac{\eps_N  }{(r_{_{B_N}})^{h_N}}\,\omega_{B_N}\left(|  y^{N-1}(\sb_{N})-  \bar y(\sb_N)|\right)  \le   \frac{\eps_N  }{(r_{_{B_N}})^{h_N}}\,\omega_{B_N}\left(\mathcal{O}\,(\eps_1+\dots+\eps_{N-1})\right),   
\end{multline*}
where $\omega_{B_N}$ denotes the modulus of continuity of ${B_N}$ and we use  $\mathcal{O}$ to mean a nonnegative function such that $\mathcal{O}(r)\leq Cr$ for all $r\geq 0$, for some constant $C>0$. Therefore, 
$
 \ds y^N( \sb_{N})- y^{N-1}(\sb_{N})= \frac{\eps_N  }{(r_{_{B_N}})^{h_N}}  B_N(\bar y(\sb_{N})) +o(|\veceps|),
  $
which concludes the proof in this  case.

 {\em Case $h_N=1$.}  Here 
   $\cbf_N=(w^0_N,w_N,a_N,\zeta_N)$  and the aimed estimate is rather standard. Nonetheless, we perform it for the sake of  self-consistency. One has
\begin{multline*}
\ds y^{N}(\sb_{N}) -  y^{N-1}(\sb_{N})= 
\int_{\sb_{N}-\eps_{N}}^{\sb_{N}}   \left[F^e(y^{N}(s),w^0_{N}, w_N, a_N)(1+\zeta_N)  \right. \\
\ds \left. - F^e(y^{N-1}(s),\wb^0(s),  \wb(s),\bar\alpha(s))\right]\dd s  = \int_{\sb_{N}-\eps_{N}}^{\sb_{N}}  \big( r_1(s) + r_2 + r_3(s)\big) \dd s,
\end{multline*}
where 
$$
\begin{array}{l}
\ds r_1(s) :=    F^e(y^{N}(s),w^0_{N}, w_{N}, a_{N})(1+\zeta_N)  
 - F^e(\yb(\sb_{N}),w^0_{N}, w_{N}, a_{N})(1+\zeta_N),
\\ [1.5ex]
\ds r_2 :=    F^e(\yb(\sb_{N}),w^0_{N}, w_{N}, a_{N})(1+\zeta_N)
-\bar F^e(\sb_{N}),
\\  [1.5ex]
\ds r_3(s) :=  \bar F^e(\sb_{N})
-F^e(y^{N-1}(s),\wb^0(\sb_{N}), \wb(\sb_{N}),\bar\alpha(\sb_{N})).
\end{array}
$$
Let us start by estimating $r_1.$ Observe that, for $s\in [\sb_{N}-\eps_{N},\sb_{N}],$ 
$$
|y^{N} (s) - \yb(\sb_{N})| \leq |y^{N} (s) - y^{N-1}(s)| +|y^{N-1} (s) - \yb(s)|+  |\yb(s) - \yb(\sb_{N})|.
$$
Moreover,   on $[\sb_{N}-\eps_{N},\sb_{N}]$,  one has  
$\|y^{N}   -y^{N-1}\|_\infty =\mathcal{O} \,(\eps_N)$  
by the Lipschitz continuity of the input-output map $\Phi$ defined in \eqref{ium}; 
$
 \|y^{N-1}  - \yb \|_\infty =\mathcal{O} \,(\eps_1+\dots+\eps_{N-1})  
$ 
  by the inductive hypothesis \eqref{ind}; and
 $ \|\yb(s) - \yb(\sb_{N})\|_\infty= \mathcal{O} \,(\eps_N)$ 
 by the Lipschitz continuity of the reference trajectory.
 Hence
 $\|y^{N} (s) - \yb(\sb_{N})\|_\infty = \mathcal{O}(|\veceps|)$, 
so that
$$
\left|  \int_{\sb_{N}-\eps_{N}}^{\sb_{N}} r_1(s) \dd s\right| 
 \leq \int_{\sb_{N}-\eps_{N}}^{\sb_{N}}L\, |y^{N}(s) - \yb(\sb_{N})| \dd s  =\eps_{N}\, \mathcal{O}(|\veceps|), 
$$
where $L$ is a suitable  positive constant. By the previous estimates  and recalling that  $\sb_{N}$ is a Lebesgue point of the map  in Def. \ref{Leb},  we get
\begin{equation*}
\begin{split}
\ds\Big|  \int_{\sb_{N}-\eps_{N}}^{\sb_{N}}     r_3(s)  \dd s \Big|  \leq  & \Big|  \int_{\sb_{N}-\eps_{N}}^{\sb_{N}}  \big[ \bar F^e(\sb_{N})- \bar F^e(s)\big]\,ds\Big|  \\
& \ds+\Big| \int_{\sb_{N}-\eps_{N}}^{\sb_{N}}  \big[ \bar F^e(s)- F^e(y^{N-1}(s),\wb^0(s), \wb(s), \bar\alpha(s)) \big]\,ds\Big|  \\
& \ds\leq   o(\eps_N)+\eps_N\,\mathcal{O} (|(\eps_1,\dots,\eps_{N-1})|).
\end{split}
\end{equation*}
 Therefore,  
 $
 y^{N}(\sb_{N}) -  y^{N-1}(\sb_{N}) 
= \eps_{N}\, r_2  + o(|\veceps|),
 $
and  the relation in  \eqref{th} concerning the state variables is proven. The proofs  of the other relations are  similar and actually easier, so we omit them. 
\end{proof}

\subsection{Set separation}\label{ParReachable}
Given a   process $( \bar S,  w^0, w, \alpha, \zeta, y^0, y, y^\ell, \beta)$ of the rescaled problem \eqref{Pee},  
 let us introduce the total cost component 
\bel{nuova}
 y^c(s) := \h(y^0(s),y(s)) + y^\ell(s), \quad  s\in [0,\bar S]. \   \footnote{ The function $y^c$ can be obviously  regarded  as the solution of  \newline 
 $\frac{dy^c}{ds}  =\Big(\frac{\partial \h}{\partial t} w^0 + 
\frac{\partial \h}{\partial x} \left(f( y,\alpha) w^0+ \sum_{i=1}^{m}g_{i}(y)w^i\right)+\ell^e(y,w^0,w,\alpha)\Big)(1+\zeta)$, \,   
$y^c(0) =  \h(0,\xbo,0).$}
 \eeq
 
Setting $ \bar y^c(s) := \h(\bar y^0(s), \bar y(s)) + \bar y^\ell(s)$, $s\in [0,\bar S]$,  for any $\delta>0$ we  define the {\em $\delta$-reachable set}  $\mathscr{R}_\delta$ and its {\em projection} $\mathscr{R'}_\delta$ as
$$
\begin{array}{l}
\mathscr{R}_\delta:=
\left\{\begin{array}{c}\displaystyle\big(y^0,y,y^c,\beta\big)(\bar S): \,  (S, w^0,w, \alpha,\zeta,y^0,y,y^\ell, \beta)  \ \text{verifies}\\  
 \displaystyle \d\left((\bar S,y^0,y, y^c,\beta),(\bar S, \bar y^0,\bar y,\bar y^c,\bar\beta)\right)<\delta
\end{array}
\right\}\subseteq\cR^{1+n+1+1},
\\ [1.9ex]
{\mathscr{R}'}_\delta:= \Big\{(y^0,y,y^c)(\bar S): \, (y^0,y,y^c,\beta)(\bar S)\in \mathscr{R}_\delta\Big\}\subseteq\cR^{1+n+1}.
 \end{array}$$	 
When {\it  all $\cbf_j=(w^{0}_j,w_j,a_j,  \zeta_j)$ ,   $j=1,\dots N$,  are needle variations, } we define the set
$$
E:= \left\{\begin{array}{c}\left(\begin{matrix}  \v^0_{\cbf_j,\sb_j}\\   M(\bar{S},\sb_j)\cdot \v_{\cbf_j,\sb_j}\\ \frac{\partial\bar \Psi}{\partial t}((\bar S)\v^0_{\cbf_j,\sb_j} + \frac{\partial\bar \Psi}{\partial x}(\bar S)\cdot M(\bar{S},\sb_j)\cdot\v_{\cbf_j,\sb_j}+  \mu(\bar{S},\sb_j)\cdot\v_{\cbf_j,\sb_j}+\v^\ell_{\cbf_j,\sb_j}\\ |w| (1+\zeta_j)-|\bar w(\sb_j)| \end{matrix}\right), \\ j=1,\dots N\end{array}\right\}
$$
where $\ds\frac{\partial\bar \Psi}{\partial t}(\bar S):= \frac{\partial\Psi}{\partial t}((\bar y^0,\bar y)(\bar S))$, $\frac{\partial\bar \Psi}{\partial x}(\bar S):= \frac{\partial\Psi}{\partial x}((\bar y^0,\bar y)(\bar S))$,   and its  projection $E'$, 
$$
E':=  \left\{\begin{array}{c}\left(\begin{matrix}  \v^0_{\cbf_j,\sb_j}\\   M(\bar{S},\sb_j)\cdot \v_{\cbf_j,\sb_j}\\ \frac{\partial\bar\Psi}{\partial t}((\bar S)\v^0_{\cbf_j,\sb_j} + \frac{\partial\bar\Psi}{\partial x}(\bar S)\cdot M(\bar{S},\sb_j)\cdot\v_{\cbf_j,\sb_j}+  \mu(\bar{S},\sb_j)\cdot\v_{\cbf_j,\sb_j}+\v^\ell_{\cbf_j,\sb_j}  \end{matrix}\right),  \\ j=1,\dots N\end{array}\right\}.
$$
Finally, let us define the convex cones
\bel{RR'}
{R}:={\rm span}^+ (E)\subset \cR^{1+n+1+1},\quad {R'}:={\rm span}^+ (E')\subset \cR^{1+n+1},
\ee
where, for a given subset  $\Theta$ of a vector space,   ${\rm span}^+(\Theta)$    denotes its {\em positive span}.

  \begin{lemma}
 {\rm (i)} The set $R'$ is a Boltyanski approximating cone of the set ${\mathscr{R'}_\delta}$  at the point $(\bar y^0,\bar y,\bar y^c)(\bar S)$.

{\rm(ii) }When  all $\cbf_j=(w^{0}_j,w_j,a_j,\zeta_j)$, for $j=1,\dots, N$,  are needle variations,
 the set  ${R}$ is  a Boltyanski approximating cone of the set  ${\mathscr{R}}$  at $(\bar y^0,\bar y,\bar y^c,\bar \beta)(\bar S)$.
  \end{lemma}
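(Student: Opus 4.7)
My plan is to exhibit, for each case, an explicit realization of $R'$ (resp.\ $R$) as the image of a positive orthant under the differential of a suitable continuous map into $\mathscr{R}'_\delta$ (resp.\ $\mathscr{R}_\delta$). Concretely, I would take $M=N$, the Boltyanski-cone parameter $C:=\cR_+^N$, and a sufficiently small cubic neighborhood $V$ of $0$ in $\cR^N$ so that, for every $\veceps=(\eps_1,\dots,\eps_N)\in V\cap C$, the composed control
\begin{equation*}
(w^{0\veceps},w^{\veceps},\alpha^{\veceps},\zeta^{\veceps})\ :=\ \A_{\cbf_N,\sb_N}^{\eps_N}\circ\cdots\circ \A_{\cbf_1,\sb_1}^{\eps_1}(\bar w^0,\bar w,\bar\alpha,0)
\end{equation*}
is well-defined (with the convention $\A_{\cbf_j,\sb_j}^{0}:=\mathrm{id}$ when $\eps_j=0$). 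Denoting by $(y^{0\veceps},y^{\veceps},y^{\ell\veceps},\beta^{\veceps})$ the associated solution of \eqref{CPR} and setting $y^{c\veceps}(s):=\Psi(y^{0\veceps}(s),y^{\veceps}(s))+y^{\ell\veceps}(s)$, I would define
\begin{equation*}
F(\veceps):=\bigl(y^{0\veceps}(\bar S),\,y^{\veceps}(\bar S),\,y^{c\veceps}(\bar S)\bigr)
\end{equation*}
for part (i), and $\tilde F(\veceps):=(F(\veceps),\,\beta^{\veceps}(\bar S))$ for part (ii). The candidate linear maps $L$ and $\tilde L$ are those sending ${\bf e}_j$ to the $j$-th generator of $E'$ and $E$ respectively, so that $L\cR_+^N=R'$ and $\tilde L\cR_+^N=R$ by the very definition in \eqref{RR'}.

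The core step is checking the first-order expansion $F(\veceps)=F(0)+L\veceps+o(|\veceps|)$. The expansion of the $(y^0,y,y^\ell)$-component at $s=\bar S$ is exactly \eqref{MainDerFormula} of Lemma \ref{LemmaMultVariations}. For the cost component I would write
\begin{equation*}
y^{c\veceps}(\bar S)-\bar y^c(\bar S)=\Psi(y^{0\veceps}(\bar S),y^{\veceps}(\bar S))-\Psi(\bar y^0(\bar S),\bar y(\bar S))+y^{\ell\veceps}(\bar S)-\bar y^\ell(\bar S),
\end{equation*}
apply a first-order Taylor expansion of $\Psi$ at $(\bar y^0,\bar y)(\bar S)$ (allowed by {\bf (Hp)}(v)), and substitute \eqref{MainDerFormula}. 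Since $|y^{0\veceps}(\bar S)-\bar y^0(\bar S)|+|y^{\veceps}(\bar S)-\bar y(\bar S)|=O(|\veceps|)$, the Taylor remainder is $o(|\veceps|)$, and the linear part assembles exactly the third entry of each generator of $E'$. For part (ii), formula \eqref{MainDerFormula1} furnishes the linear expansion of $\beta^{\veceps}(\bar S)-\bar\beta(\bar S)$ with coefficient $|w_j|(1+\zeta_j)-|\bar w(\sb_j)|$, matching the fourth entry of the generators of $E$.

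To close the verification I need continuity of $F$ (resp.\ $\tilde F$) at $\veceps=0$ and the inclusion $F(V\cap C)\subset\mathscr{R}'_\delta$ (resp.\ $\tilde F(V\cap C)\subset\mathscr{R}_\delta$). Both follow from the Lipschitz continuity of the input-output map $\Phi$ in \eqref{ium}: the set of $s\in[0,\bar S]$ on which the composed control differs from $(\bar w^0,\bar w,\bar\alpha,0)$ has Lebesgue measure at most $\sum_{j:h_j=1}\eps_j+\sum_{j:h_j\geq 2}2\eps_j^{1/h_j}$, which vanishes as $\veceps\to 0$. Hence the trajectories $(y^{0\veceps},y^{\veceps},y^{\ell\veceps})$ converge uniformly on $[0,\bar S]$ to $(\bar y^0,\bar y,\bar y^\ell)$, and so does $y^{c\veceps}$ by continuity of $\Psi$. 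Combined with \eqref{MainDerFormulak}, this also keeps $\beta^{\veceps}$ within $\delta$ of $\bar\beta$ in sup-norm after a final shrinkage of $V$, giving the required inclusions into the $\delta$-reachable sets.

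The only genuine difficulty is precisely what separates (i) from (ii): if even one $h_j\geq 2$, the contribution $\eps_j^{1/h_j}$ appearing in \eqref{MainDerFormulak} dominates any linear term in $\eps_j$, so $\tilde F$ cannot admit a differential at $0$ and $R$ fails to equal $\tilde L\cR_+^N$. This is exactly why part (ii) requires every $\cbf_j$ to be a needle variation, whereas part (i) — by projecting out $\beta$ — is free to combine needle and bracket-like variations simultaneously.
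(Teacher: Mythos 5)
Your proposal is correct and follows essentially the same route as the paper: both build the endpoint map $\veceps\mapsto\big(y^{0\veceps},y^{\veceps},y^{c\veceps}\big)(\bar S)$ (with $\beta^{\veceps}(\bar S)$ adjoined in the needle-only case), obtain its first-order expansion from Lemma \ref{LemmaMultVariations} together with the differentiability of $\Psi$, and identify $R'$ (resp.\ $R$) as the image of the positive orthant under the resulting linear map, as required by Definition \ref{approximatingcone}. Your extra checks (continuity via the Lipschitz dependence of $\Phi$ on the measure where controls differ, inclusion into $\mathscr{R}'_\delta$, and the explanation of why bracket-like variations obstruct the $\beta$-expansion in part (ii)) are points the paper leaves implicit, not a different method.
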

  
  \begin{proof}
Let us set  $ y^{c\veceps}(s):=   \h((y^{0\veceps},y^{\veceps})(s))+y^{\ell\veceps}(s) $, where $y^{\ell\veceps}$, $y^{0\veceps}$, and $y^{\veceps}$ are as in Lemma \ref{LemmaMultVariations}. 
  By \eqref{MainDerFormula} we get 
\begin{multline*}
y^{c\veceps}(\bar S) - \bar y^c(\bar S) = 
\sum_{j=1}^N\,\, \eps_j\Big(\ds \frac{\partial\bar\h}{\partial t}(\bar S)\v^0_{\cbf_j,\sb_j} + \frac{\partial\bar \h}{\partial x}(\bar S) \cdot M(\bar{S},\sb_j)\cdot\v_{\cbf_j,\sb_j}  \\
+  \mu(\bar{S},\sb_j)\cdot\v_{\cbf_j,\sb_j}+\v^\ell_{\cbf_j,\sb_j} \Big)+o(|\veceps|).
\end{multline*}
 Therefore, part (ii) of the statement follows from Lemma \ref{LemmaMultVariations}.

To prove part (i), for some $\tilde\eps>0$ sufficiently small, let us define the function  $F:(0,+\infty)^N\cap\tilde\eps\,\Ba_N\to \cR^{1+n+2}$ by setting 
   $
  F(\veceps) := \left(\begin{matrix}  y^{0\veceps}(\bar S),  y^{\veceps}(\bar S), y^{c\veceps}(\bar S)\end{matrix}\right).
  $
It is straightforward to prove that 
   $
 F(\veceps) =  \left(y^{0}(\bar S), y(\bar S), y^{c}(\bar S))\right)  + L\cdot \veceps + o(|\veceps|),
 $
 where  the linear operator $L\in {\rm Hom}(\cR^N,\cR^{1+n+1})$ is   defined by 
$$
L\cdot \veceps:=\ds \sum_{j=1}^N\,\, \eps_j\left( \begin{matrix} \v^0_{\cbf_j,\sb_j}\\     M(\bar S,\sb_j) \v_{\cbf_j,\sb_j}
\\ \frac{\partial\bar\Psi}{\partial t}(\bar S)\v^0_{\cbf_j,\sb_j} + \frac{\partial\bar\Psi}{\partial x}(\bar S) \cdot M(\bar{S},\sb_j)\cdot\v_{\cbf_j,\sb_j}+ \mu(\bar{S},\sb_j)\cdot\v_{\cbf_j,\sb_j}+\v^\ell_{\cbf_j,\sb_j}\end{matrix}\right).
$$
Hence (i) is proved, in that $R'=L \cdot (0,+\infty)^N$.
\end{proof}
 
Let us consider the {\it profitable set} $\mathscr{P}$ and its  projection  $\mathscr{P}'$, defined as
$$
\begin{array}{l}
  \mathscr{P} :=   \cS\times \left(-\infty,\bar y^c(\bar S)\right)\times
 [0,K]\, \bigcup \,\left\{(\bar y^0,\bar y,\bar y^c,\bar \beta)(\bar S) \right\}, \\ [1.5ex]
 \mathscr{P}'
:=  \cS\times \left(-\infty,\bar y^c(\bar S)\right)\, \bigcup \, \left\{(\bar y^0,\bar y,\bar y^c)(\bar S) \right\},
\end{array}
 $$
and let $\Gamma$ be a  Boltyanski approximating cone for the target $\cS$ at $(\bar y^0,\bar y)(\bar S)$. 
Recalling that  $\bar \beta(\bar S)<K$,
 one trivially checks that the sets
  $$
 {P} := \Gamma \times\cR_-\times\{0\}, \qquad {P}' := \Gamma \times \cR_-,
  $$
are  Boltyanski approximating cones of $\mathscr{P}$ at $(\bar y^0,\bar y, \bar y^c, \bar\beta)(\bar S)$ and of $\mathscr{P'}$ at $(\bar y^0,\bar y, \bar y^c)(\bar S)$, respectively.  
We will need the following elementary result: 
\begin{lemma}
\label{separazione}
 There exists $\delta>0$ such that  
the sets $\mathscr{P}'$ and ${\mathscr{R}'}_\delta$   are locally separated  at $(\bar y^0,\bar y,\bar y^c)(\bar S)$.
\end{lemma}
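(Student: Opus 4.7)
The plan is to argue by contradiction using the local minimality of the reference process in the rescaled problem \eqref{Pee}. Suppose that no $\delta>0$ produces local separation. Then for every integer $k\ge1$, picking $\delta_k:=1/k$, there is a point
$$
(y^0_k,y_k,y^c_k)(\bar S)\in \mathscr{P}'\cap\mathscr{R}'_{\delta_k}\setminus\{(\bar y^0,\bar y,\bar y^c)(\bar S)\}
$$
together with a corresponding rescaled process $(\bar S,w^0_k,w_k,\alpha_k,\zeta_k,y^0_k,y_k,y^\ell_k,\beta_k)$ of \eqref{CPR}. By definition of $\mathscr{R}_{\delta_k}$, one has $\mathrm{d}\bigl((\bar S,y^0_k,y_k,y^c_k,\beta_k),(\bar S,\bar y^0,\bar y,\bar y^c,\bar\beta)\bigr)<1/k$, so that in particular $\beta_k(\bar S)\to\bar\beta(\bar S)$.

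Because the point $(y^0_k,y_k,y^c_k)(\bar S)$ belongs to $\mathscr{P}'$ and is distinct from the reference endpoint, it must lie in $\mathcal{S}\times(-\infty,\bar y^c(\bar S))$; hence $(y^0_k(\bar S),y_k(\bar S))\in\mathcal{S}$ and $y^c_k(\bar S)<\bar y^c(\bar S)$. Moreover, since $\bar\beta(\bar S)<K$ by hypothesis and $\beta_k(\bar S)\to\bar\beta(\bar S)$, for all $k$ large enough we have $\beta_k(\bar S)<K$. Therefore, for such $k$ the rescaled process is feasible for problem \eqref{Pee}, and its cost
$$
\Psi\bigl((y^0_k,y_k)(\bar S)\bigr)+y^\ell_k(\bar S)=y^c_k(\bar S)<\bar y^c(\bar S)=\Psi\bigl((\bar y^0,\bar y)(\bar S)\bigr)+\bar y^\ell(\bar S)
$$
is strictly smaller than the reference cost.

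Finally, since $\mathrm{d}\bigl((\bar S,y^0_k,y_k,y^c_k,\beta_k),(\bar S,\bar y^0,\bar y,\bar y^c,\bar\beta)\bigr)<1/k$, the processes become arbitrarily close to the reference in the distance $\mathrm{d}$ defined in \eqref{dinfty}. This contradicts the fact that $(\bar S,\bar w^0,\bar w,\bar\alpha,0,\bar y^0,\bar y,\bar y^\ell,\bar\beta)$ is a local minimizer of the rescaled problem \eqref{Pee} (which is guaranteed by the remark right after \eqref{Pee}). The expected main subtlety is essentially bookkeeping: one must ensure that the sup-distance on the $(y^0,y,y^c,\beta)$-trajectories (which is what $\mathscr{R}_\delta$ controls) implies closeness in the distance $\mathrm{d}$ appearing in the definition of local minimum for \eqref{Pee}, and this is immediate because $\mathrm{d}$ is defined precisely via the sup-norm extension with fixed end-time $\bar S$. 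Hence $\delta$ can be chosen as any value small enough that $\beta_k(\bar S)<K$ and the sup-distance falls below the local minimality radius of the reference process.
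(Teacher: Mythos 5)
Your proof is correct and follows essentially the same route as the paper's: argue by contradiction, use $\bar\beta(\bar S)<K$ together with the $\delta$-closeness of $\beta$ to guarantee feasibility, observe that a point of $\mathscr{P}'\cap\mathscr{R}'_\delta$ other than the reference endpoint lies in $\cS\times(-\infty,\bar y^c(\bar S))$ and hence yields a feasible rescaled process with strictly smaller cost near the reference, contradicting local minimality for \eqref{Pee}. The only cosmetic difference is that the paper lifts the non-separation to the unprojected sets $\mathscr{P}$, $\mathscr{R}_\delta$ before invoking optimality, while you derive the cost contradiction directly (the passage from closeness of $(y^0,y,y^c,\beta)$ to closeness of $(y^0,y,y^\ell,\beta)$ being immediate from the Lipschitz continuity of $\Psi$ on the relevant bounded set).
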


\begin{proof} Suppose by contradiction  that  for every $\delta>0$ the sets $\mathscr{P}'$ and ${\mathscr{R}'}_\delta$  are not locally separated  at $(\bar y^0,\bar y,\bar y^c)(\bar S)$.  Then, given $\delta\in(0, K-\bar\beta(\bar S))$,\footnote{This interval is not empty, for $\bar\beta(\bar S)<K$.} there exists a process $(\bar S,w^0,w,\alpha,\zeta, y^0,y,y^\ell, \beta)$  of \eqref{CPR}  verifying 
$$
\ds (y^0,y,y^c)(\bar S)\in {\mathscr{R}'}_\delta\cap \mathscr{P}', \quad {\rm d}((y^0,y,y^c, \beta),(\bar y^0,\bar y,\bar y^c,\bar \beta))<\delta.
$$
This implies that $\beta(\bar S) \leq \delta + \bar \beta(\bar S)<K$, thus the final point $(y^0,y,y^c,\beta)(\bar S)\in\mathscr{R}_\delta\cap \mathscr{P}$. Hence,  for every $\delta\in(0, K-\bar\beta(\bar S))$  the sets  
$\mathscr{P}$ and $\mathscr{R}_\delta$ are not locally separated, which contradicts  the local optimality of the reference process.
\end{proof}
 
By Lemma \ref{separazione} the projected reachable set $\mathscr{R}'_\delta$  is locally separated from the projected profitable set  $\mathscr{P'}$ at $(\bar y^0,
 \bar y, \bar y^c)(\bar S)$, for some $\delta>0$. Therefore, since $R'$ and $P'$ are    approximating cones to $\mathscr{R}'_\delta$ and $\mathscr{P'}$, respectively,  and $P'$ is not a subspace,   in view of Theorem \ref{ThmLocallySep} there exists   a vector $(\xi_0,\xi, \xi_c)\in\cR^{1+n+1}$ verifying
$$
0\neq(\xi_0,\xi, \xi_c)\in {R'}^{\bot}\cap(-{P'}^{\bot}).
$$
Since  ${P'}^{\bot} =\Gamma^\bot\times\cR_+,
$  
one gets
$(\xi_0,\xi)\in -\Gamma^\bot,$  $ \xi_c = -\lambda \leq 0,$
and
$$ \xi_0\v^0+\xi\cdot\v+\xi_c\v^c \leq 0 \qquad \forall (\v^0,\v,\v^c)\in R'.$$
By the definition of $R'$ given in \eqref{RR'},
the latter relation is verified if and only if 
\begin{multline*}
\Big(\xi_0-\lambda\frac{\partial\bar\Psi}{\partial t}(\bar S)\Big)\v^0_{\cbf_j,\sb_j}+\Big(\xi-\lambda \frac{\partial\bar \Psi}{\partial x}(\bar S)\Big)\cdot M(\bar{S},\sb_j)  \cdot \v_{\cbf_j,\sb_j}\\
-\lambda\left( \mu(\bar{S},\sb_j)\cdot\v_{\cbf_j,\sb_j}+\v^\ell_{\cbf_j,\sb_j}\right)  \leq 0,\quad \text{for all } j=1,\ldots,N.
\end{multline*}
Therefore, setting 
$$
(p_0,p)(s):= \left(\xi_0   -\lambda\frac{\partial\bar\Psi}{\partial t}(\bar S)  \,\,,\,\,\left(\xi-\lambda \frac{\partial\bar \Psi}{\partial x}(\bar S)\right)\cdot M(\bar S,s) -\lambda\mu(\bar{S},s)
\right),$$
we obtain  that  the multiplier  $(p_0, p,\lambda)\in \cR\times AC\left([0,\bar S],\cR^{ n}\right)
\times \cR_+$ verifies
\bel{quasimax}
p_0\v^0_{\cbf_j,\sb_j}+p(\sb_j)\cdot \v_{\cbf_j,\sb_j}-\lambda\v^\ell_{\cbf_j,\sb_j}  \leq 0,\quad \text{for every }j=1,\dots,N,
\eeq 
 the  non-triviality  condition \eqref{fe1}, 
 and (by $M(\bar S,\bar S) = {\rm Id}$, $\mu(\bar{S},\bar S)=0$) the non-transversality
condition \eqref{fe4}.
Moreover, by the definitions of $ M(\bar S,\cdot)$ and  $\mu(\bar S,\cdot)$,  the path $p$ solves the adjoint equation \eqref{fe2}. Finally,  for a needle variation generator $\cbf_j=(w^0_j,w_j,a_j,\zeta_j)$, by \eqref{quasimax} we get 
$$
\begin{array}{l}
H\Big(\bar y(\sb_j),p_0,p(\sb_j),0,\lambda,w^0_j(1+\zeta_j), w_j(1+\zeta_j), a_j\Big) \\ [1.5ex]
\qquad\qquad\qquad\qquad\qquad -H\Big(\bar y(\sb_j),p_0,p(\sb_j),0,\lambda,\bar w^0(\sb_j),\bar w(\sb_j),\bar\alpha(\sb_j)\Big)\leq 0,
\end{array}
$$
while, for  a bracket-like variation generator $\cbf_j=B_j$, 
 we obtain
$p(\sb_j)\cdot B_j (\bar y(\sb_j)) \leq 0$.


  \subsection{Conclusion of the proof}\label{ParConclusion}

\vsm
 To conclude the proof  we need to extend the previous inequalities to almost all $s\in [0,\bar S]$ and all variations generators $
 \mathbf{c}\in \mathfrak{V}$. This will be achieved   via density arguments coupled with infinite intersection criteria.  Though this is a quite   standard procedure, we give the details for the sake of completeness.  
By Lusin's Theorem, one has that  
$
\ds (0,\bar S)_{\rm Leb}=\bigcup_{k=0}^{+\infty} E_k,
$
where $E_0$ has null measure and, for every $k\in\cN$, the set  $E_k$ is   compact  and the restriction to $E_k$ of the measurable  map considered in Definition \ref{Leb}  is continuous.  For every $k$, let $D_k\subseteq E_k$ be the set of density points\footnote{ We recall that  $t\in \tilde E\subset\cR$  is a {\em density point} for $\tilde E$ if $\ds \lim_{\delta\to0^+}\frac{{\rm meas} \big([t-\delta,t+\delta]\cap \tilde E\big)}{2\delta}=1$.} of $E_k.$
Since $D_k$ and $E_k$ have the same measure, by the Lebesgue density Theorem, $\ds D=\bigcup_{k=0}^{+\infty} D_k\subset [0,\bar S]$ has full measure.

\begin{definition} Let   $F$ be an arbitrary subset  of $D\times  \mathfrak{V}$.  We say that a triple $(\bar p_0,\bar p, \lambda)\in \cR^{1+n+1}$  {\rm  verifies  (P)$_{F}$} if $\lambda\ge0$ and, setting $p_0:=\bar p_0$,   $p(\cdot):=\bar p\cdot M(\bar S,\cdot)$, one has that:

{\rm (i)} \,  $\ds( p_0, p(\bar S))+ \lambda\left(\frac{\partial\bar\Psi}{\partial t}(\bar S)\,,\, \frac{\partial\bar\Psi}{\partial x} (\bar S)\right)\in -\Gamma^{\bot}$;

{\rm (ii) }\, for every $(s,\cbf)\in F$ with $\cbf=(w^0,w,a,\zeta)$, the following inequality \newline
$H\Big(\bar y(s),p_0,p(s),0,\lambda,w^0(1+\zeta), w(1+\zeta),a\Big)\le H\Big(\bar y(s),p_0,p(s),0,\lambda,\bar w^0(s),\bar w(s),\bar\alpha(s)\Big)$
holds true, while  for every $(s,\cbf)\in F$ such that  $\cbf=B\in \B^0$,
$p(s)\cdot B(\bar y(s))\leq 0$.
\end{definition}
For any given  subset  $F\subset D\times  \mathfrak{V}$, let us set
$$
 \Lambda(F):=\Big\{(\bar p_0,\bar p, \lambda)\in\cR^{1+n+1}: \,  |(\bar p_0,\bar p,\lambda)|=1, \, (\bar p_0,\bar p, \lambda) \text{ verifies (P)$_{F}$}\Big\}.
$$
Our goal consists in showing  that $\Lambda(F)\neq \emptyset$ for some $F$ comprising 
pairs $(s,\mathbf{c)},$ such that the union of all times $s$ is a full measure subset of $[0,\bar S]$ and $\mathbf{c}$ can range over all  $\mathfrak{V}$. Clearly, for arbitrary subsets   $F_1$, $F_2$  of $D\times  \mathfrak{V}$ the sets 
$\Lambda(F_1), \Lambda(F_2)$, if not empty,  are  compact and $\Lambda(F_1\cup F_2)=\Lambda(F_1)\cap \Lambda(F_2)$.
By  the previous step,   $\Lambda(F)\ne\emptyset$ as soon as $F$ is {\it finite}  and of the form
\be\label{Fform}
\Big\{(\sb_1,\cbf_1), \dots, (\sb_N,\cbf_N)\Big\}, \quad  {\rm with }\,\, 0=:\bar s_0<\bar s_1<\dots<\bar s_N<\bar S.
\ee
In order to prove that $\Lambda(F)\ne\emptyset$ for an arbitrary {\it finite} set $F\subset D\times  \mathfrak{V}$,  we have  to show that it is non-empty even when 
$F=\Big\{(\sb_1,\cbf_1), \dots, (\sb_N,\cbf_N)\Big\}$ with $0=:\bar s_0\le\bar s_1\le\dots\le\bar s_N<\bar S$ and one allows that  $\bar s_j =\bar s_{j+1}$ for some $j=0,\ldots,N-1$. To this end, observe that every $\bar s_j$ belongs to some  set of density points $D_k$,  that we denote $D_{k(j)}$. Hence, there exist sequences $(\bar s_{j,i})_{i\in\cN}$, for  $j=1,\dots,N,$ such that 
$$
\bar s_{j,i}\in D_{k(j)}  \quad {\rm and } \quad  \bar s_{1,i}<\dots<\bar s_{N,i}, \quad \text{for all } i\in\cN, \quad {\rm and } \quad \lim_{i \to +\infty} \bar s_{j,i}=\bar s_j,
$$
For each $i\in\cN$, set $F_i:=\Big\{(\bar s_{1,i}, \cbf_1),\dots,(\bar s_{N,i},\cbf_N)\Big\}$, so that $F_i$ has the form \eqref{Fform} and hence $\Lambda(F_i)\ne\emptyset$. For each $i\in\cN$, let us select  $(\bar p_{0_i},\bar p_i,\lambda_i)\in \Lambda(F_i)$.  Since $|(\bar p_{0_i},\bar p_i,\lambda_i)|=1$, by possibly taking  a subsequence, we can assume that $(\bar p_{0_i},\bar p_i,\lambda_i)$ converges to a point $(\bar p_0,\bar p,\lambda)$ with $|(\bar p_0,\bar p,\lambda)|=1$.
By the definition of $D_{k(j)} (\subseteq E_{k(j)} )$, passing to the limit as  $i\to+\infty$ one obtains that  $(\bar p_0,\bar p,\lambda)\in\Lambda(F)$. Hence we have proved  that  $\Lambda(F)\ne\emptyset$ as soon as  ${\rm card}(F)<+\infty$ \footnote{Here ${\rm card}(Q)$ denotes the cardinality of the set $Q$}. 
In particular, if  we take a finite family of subsets  $F_1,\dots,F_M \subset D\times  \mathfrak{V}$ with ${\rm card}(F_i)<+\infty$ for all $i=1,\dots,M$, we get
 $
\ds\Lambda(F_1)\cap\dots\cap\Lambda(F_M)=\Lambda \left(\cup_{i=1}^MF_i\right)\ne\emptyset.
 $
  Hence 
   $
  \Big\{\Lambda(F): F\subset  D\times  \mathfrak{V}, \ {\rm card}(F)<+\infty\Big\}$ is a family of compact subsets such that the intersection of each finite subfamily is non-empty. This implies that also the (infinite) intersection of all $\Lambda(F)$ over finite sets $F$ is non-empty.  Therefore
$
 \Lambda( D\times  \mathfrak{V})=\Lambda \left( \bigcup_{{\rm card}(F)<+\infty} F \right)=\bigcap_{{\rm card}(F)<+\infty}\Lambda(F)\ne\emptyset.
$
 This means that  there exists some  covector  $(\bar p_0,\bar p,\lambda)\neq 0$ such that, setting $p_0:=\bar p_0$, $p( \cdot):=\bar p\cdot M(\bar S,\cdot)$, for all time  $s$  in the full-measure set $D$,  one gets 
\bel{rel1}
\begin{split}
{\bf H} \big(&\bar y(s),p_0,p(s),0,\lambda\big) =H\Big(\bar y(s),p_0,p(s),0,\lambda,\bar w^0(s),\bar w(s),\bar\alpha(s)\Big)  \\ 
&= \ds\max_{(w^0,w,a,\zeta)\in W\times A\times \left[-\frac12,\frac12\right]}H\Big(\bar y(s),p_0,p(s),0,\lambda,w^0(1+\zeta), w(1+\zeta),a\Big) \\
&= \max_{\zeta\in  \left[-\frac12,\frac12\right]}(1+\zeta)\, {\bf H}\big(\bar y(s),p_0,p(s),0,\lambda\big),
 \end{split}
\eeq
\bel{rel2}
p(s)\cdot B(\bar y(s)) \le 0, \quad \text{for all } B\in \B^0.
\eeq

The first relation in  \eqref{rel1} coincides with  \eqref{fe3}, while the last one immediately implies \eqref{engine}. 
Finally,  observe that 
 $B\in\B^0$ if and only if $-B\in \B^0$, so that  \eqref{rel2} yields  \eqref{pg000hi}. 
This concludes the proof, since,  in case $\bar y(\bar S)>0$, the strengthened  non-triviality condition  \eqref{strongfe1}
can be obtained as in the proof of the First Order Maximum Principle.

\appendix

\section*{Acknowledgments}
This research is partially supported by the  Padua University grant SID 2018 ``Controllability, stabilizability and infimum gaps for control systems'', prot. BIRD 187147; by the ``National Group for Mathematical Analysis, Probability and their Applications"  (GNAMPA-INdAM) (Italy);  by the European Union under the 7th Framework Programme FP7-PEOPLE-2010-ITN  Grant agreement number 264735-SADCO; by FAPERJ (Brazil) trough the {\em ``Jovem Cientista do Nosso Estado''} Program; by CNPq and CAPES (Brazil) and by the Alexander von Humboldt Foundation (Germany). 

\if{

\section{Appendix}

\subsection{Proof of Lemma \ref{Min=} }\label{ProofMin=}
\vsm

 It suffices  to prove the  following assertion:
\begin{itemize}
\item[(a)] there is  $\delta>0$ such that 
 $
\h(\bar T, \bar x(\bar T), \bar \va(\bar T)\,\leq\, \h(T,  x(T),\va(T))
 $
for all  feasible regular trajectory-control pairs  $(T, x,\va,u,a)$ verifying $(T,  u,a)\in \U({\check x})$ and 
\bel{close1App}
d_\infty\Big((T, x,\va),(\bar T,\bar x,\bar \va)\Big) = |T-\bar T|+\|(\bar x,\bar \va) - (x,\va)\|_{L^{\infty}(\cR)}  \leq \delta 
\eeq
if and only if
\item[(b)] there is  $\delta'>0$ such that 
$
\h((\bar y^0, \bar  y,\bar \beta)(\bar S)) \leq  \h((y^0, y,\beta)(S))
$
 for all feasible space-time trajectory-control pairs $(S, y^0, y,\beta, \varphi^0, \varphi,\alpha)$  satisfying $(S, \varphi^0, \varphi,\alpha)\in\Gamma_+({\check x})$ and  
 \begin{equation}
\label{close2App}
\begin{array}{l}
d^e_\infty\Big(S, y^0, y, \beta),( \bar S, \bar y^0,\bar  y,\bar \beta)\Big)=  \\ [1.5ex]
 \qquad\quad  |S-\bar S|+\|(y^0, y,\beta) - (\bar y^0, \bar y , \bar \beta)\|_{L^{\infty}(\cR)} \leq \delta'\,.
 \end{array}
\end{equation}
\end{itemize}
 Notice, to begin with,   that we can consider the  extension   to $\cR$ of any regular feasible trajectory-control pair $(\tilde T,\tilde x,\tilde \va, \tilde u, \tilde a)$  as a solution of the control system 
$$
\left\{
\begin{array}{l}
\displaystyle\frac{dx}{dt}(t)\,=\, f(t,x(t),a(t))\,b(t) + \sum_{j=1}^{m}g_{j}(t,x(t))\frac{du^j}{dt}(t) \, \quad \mbox{ a.e. } t \in \cR,  
\\
\displaystyle\frac{d\va}{dt}(t)\,=\,  \left|\frac{du}{dt}(t)\right| \, \quad \mbox{ a.e. } t \in \cR,\\
\ds(x,\va)(0)=(\tilde x,\tilde \va)(0)
\end{array}\right.
$$
 associated with the control 
$$
(b,u,a)(t):=\left\{\begin{array}{l} (0,\tilde u(\tilde t_1), \hat a) \qquad t<0,  \\ [1.5ex]
(1,\tilde u(t),\tilde a(t)) \qquad t\in [0,\tilde T],  \\ [1.5ex]
(0,\tilde u(\tilde t_2), \hat a) \qquad t>\tilde T,
\end{array}\right.
$$ 
where $\hat a\in A$ is arbitrary.   Similarly, the  extension to $\cR$ of any feasible extended sense process $(\tilde S,\tilde y^0,\tilde  y,\tilde \beta  ,\tilde\varphi^0,\tilde \varphi)$ can be regarded as a solution of the control system
$$
\left\{
\begin{array}{l}
\displaystyle\frac{d{y^0}}{ds} (s)\,=\, \beta(s)\,\frac{d\varphi^0}{ds}(s)   \ \mbox{ a.e. } s \in [0,S]\,,  \\ [1.5ex]
\displaystyle\frac{dy}{ds} (s)\,=\, f(y^0(s), y(s))\beta(s)\frac{d\varphi^0}{ds}(s)+ \sum_{j=1}^{m}g_{j}(y^0(s), y(s))\frac{d\varphi^j}{ds}(s)   \ \mbox{ a.e. } s \in [0,S]\,, 
\\ [1.5ex]
\displaystyle\frac{d\beta}{ds} (s)\,=\,\left|\frac{d\varphi}{ds}(s)\right|  \ \mbox{ a.e. } s \in [0,S]\,, 
\\ [1.5ex]
(y^0,y,\beta)(0)=(\tilde y^0,\tilde  y,\tilde \beta)(0)
\end{array}\right.
$$
associated  with the control 
$$
(\beta, \varphi^0,\varphi,\alpha)(s):=\left\{\begin{array}{l} (0,\tilde\varphi^0(0)+s,\tilde \varphi(0),\hat a) \qquad\qquad s<0,  \\ [1.5ex]
(1,\tilde\varphi^0,\tilde \varphi,\tilde\alpha)(s) \qquad\qquad\qquad s\in[0,\tilde S],  \\ [1.5ex]
(0,\tilde\varphi^0(\tilde S)+(s-\tilde S),\tilde \varphi(\tilde S),\hat a) \qquad s>\tilde S.
\end{array}\right.
$$ 
 The map ${\mathcal I}$ can be defined on these  $\cR$-extended trajectory-control pairs.   In particular, the assertions regarding ${\mathcal I}$  are still valid for  this extension of ${\mathcal I}$. 
We make the convention that in  the rest of the proof  all trajectory-control pairs are  extended to $\cR$ as described above.
\vsm
Let us prove that  (a) $\Longrightarrow$ (b).  Let $(\bar T,\bar x,\bar \va, \bar u,\bar a)$  be a  regular  local minimizer as in (a), let $(\bar S,\bar y^0, \bar y, \bar \beta, \bar\varphi^0,\bar\varphi,\bar\alpha)$ be the canonical representative in ${\mathcal I}(\bar T,\bar x,\bar \va, \bar u,\bar a)$ and for any  space-time trajectory-control pair $(S,y^0, y, \beta, \varphi^0,\varphi,\alpha)$ with  $(S, \varphi^0,\varphi,\alpha)\in\Gamma_+({\check x})$, let us set $(T, x,\va ,u,a  ):={\mathcal I}^{-1}[(S,{y^0}  , y  , \beta, \varphi^0  ,\varphi,\alpha  )]$. 

\noindent For every $t\in\cR$, set $s:=(\varphi^0)^{-1}(t)=\sigma(t)$ and  $\bar s:=(\bar\varphi^0)^{-1}(t)=\bar\sigma(t)$, so that $|s-\bar s|=|\sigma(t)-\bar\sigma(t)|$ (and $\varphi^0(s)=t=\bar\varphi^0(\bar s)$). By  definition of  ${\mathcal I}$ one has
$$
\begin{array}{l}
|\bar T-T|+ |(\bar x,\bar v)(t)- (x,v)(t)|  =|\bar\varphi^0(\bar S)-\varphi^0(S)|+ |(\bar y,\bar\beta)(\bar s)-(y,\beta)(s)|\le \\ [1.5ex]
|\bar\varphi^0(\bar S)-\bar\varphi^0(S)|+|\bar\varphi^0(S)-\varphi^0(S)|+ |(\bar y,\bar\beta)(\bar s)-(\bar y,\bar\beta)(s)|+|(\bar y,\bar\beta)(s)-(y,\beta)(s)|\le \\ [1.5ex]
L|\bar S-S|+\|\bar\varphi^0-\varphi^0\|_{L^\infty(\cR)}+L|\bar s-s|+\|(\bar y,\bar\beta)-(y,\beta)\|_{L^\infty(\cR)}
  \le \\
  C\,d^e_\infty\Big(S, y^0, y, \beta),( \bar S, \bar y^0,\bar  y,\bar \beta)\Big)+L|\bar s-s|,
\end{array}
$$
where the last inequalities follow from  the Lipschitzianity of  the controls $\bar\varphi^0$, $\bar\varphi$, which implies  that the trajectory  $(\bar y^0,\bar y,\bar \beta)$ is Lipschitz continuous too.  Here and in the sequel  $L>0$, $C>0$  denote generic Lipschitz constants and upper bounds, respectively.

\noindent  Let us set $\bar t:=\bar\varphi^0(s)$, so that $\bar\sigma(\bar t)=s=\sigma(t)$. Then
 $$
  |s-\bar s|=|\sigma(t)-\bar\sigma(t)|=|\bar\sigma(\bar t)-\bar\sigma(t)|\le\omega_{\bar\sigma}(|\bar t-t|)=\omega_{\bar\sigma}(|\bar\varphi^0(s)-\varphi^0(s)|)\le \omega_{\bar\sigma}(\|\bar\varphi^0-\varphi^0\|_{L^\infty(\cR)}),
  $$
where $\omega_{\bar\sigma}$ is the modulus of continuity of $\bar\sigma$, which exists since $\bar\sigma=(\bar\varphi^0)^{-1}$ is absolutely continuous and thus uniformly continuous.  Hence
$$
\begin{array}{l}
d_\infty\left((T, x,\va),(\bar T,\bar x,\bar \va)\right)  \le \\ \qquad \qquad C\,d^e_\infty\left((S, y^0, y, \beta),( \bar S, \bar y^0,\bar  y,\bar \beta)\right)+ 
 L \omega_{\bar\sigma}\left(d^e_\infty\left((S, y^0, y, \beta),( \bar S, \bar y^0,\bar  y,\bar \beta)\right)\right), 
\end{array}
$$
 which implies assertion (b),  provided we choose $\delta'>0$ verifying $C\delta'+\omega_{\bar\sigma}(\delta')<\delta$. Indeed, we obtain
$$
\begin{array}{l}
\h((\bar y^0, \bar  y,\bar\beta)(\bar S))=\h(\bar T,  \bar x(\bar T), \bar\beta(\bar T))\le \\ [1.5ex]
\qquad\qquad\qquad \h(T,  x(T),\va(T))=\h((y^0, y,\beta) (S)),
 \end{array}
$$ 
  for all feasible space-time trajectory-control pair  with $\frac{d\varphi_0}{ds}>0$ a.e. and   satisfying \eqref{close2App}  for such $\delta'$. 

\vsm
Let us now show  that (b) $\Longrightarrow$ (a).  Let $(\bar S,\bar y^0  ,\bar  y ,\bar\beta, \bar \varphi^0  ,\bar \varphi, \bar\alpha )$ be a $L^{\infty}$  local minimizer for problem $(P_e)$   among feasible space-time trajectories with controls in $\Gamma_+({\check x})$, as in (b). 
 Without loss of generality we  assume that   $(\bar S,\bar y^0  ,\bar  y ,\bar\beta, \bar \varphi^0  ,\bar \varphi )$ is canonical.   
For  any regular feasible trajectory-control pair $(T, x ,\va, u ,a )$ let $(S,y^0, y, \beta, \varphi^0  ,\varphi,\alpha)$ be the canonical representative in ${\mathcal I}(T, x,\va , u ,a )$.  

\noindent For every $s\in\cR$ let  $t:=\varphi^0(s)$,  $\bar t:=\bar\varphi^0(s)$.  Set  $\sigma :=(\varphi^0)^{-1}$,  $\bar\sigma:=(\bar\varphi^0)^{-1}$ and $\bar s:=\bar\sigma(t)$.   Notice that the choice of canonical controls implies, for every $s\in\cR$, 
$$
\sigma(t)=s=y^0(s)+\beta(s)=  t+\va(t), \quad \bar\sigma(t)=\bar s=\bar y^0(\bar s)+\bar\beta(\bar s)=   t+\bar\va( t),
$$
so that 
$$
 |\bar s-s|=| \bar\sigma(t)- \sigma(t)|= |\bar\va(t)-\va(t)|.
 $$ 
Then  
$$
\begin{array}{l}
|\bar S-S|+|(\bar y^0, \bar y,\bar\beta)(s)-(y^0,y,\beta)(s)|\le   \\ [1.5ex]
|\bar T+\bar\va(\bar T)-T-\va(T)|+|(\bar y^0, \bar y,\bar\beta)(s)-(\bar y^0, \bar y,\bar\beta)(\bar s)|+|(\bar y^0, \bar y,\bar\beta)(\bar s)-(y^0,y,\beta)(s)|\le  \\ [1.5ex]
|\bar T-T|+|\bar\va(\bar T)- \va(T)|+L|\bar s-s|+|(t, \bar x( t),\bar \va(t))-(t,x(t), \va(t))|\le \\ [1.5ex]
 \,d_\infty\left((\bar T, \bar x,\bar\va), (T,x,\va)\right)+(1+L)\|\bar \va-\va\|_{L^\infty(\cR)}, 
\end{array}
$$
  where $L$ denotes the Lipschitz constant of $(\bar y^0, \bar y,\bar\beta)$  and we get
$$
d^e_\infty\Big(S, y^0, y, \beta),( \bar S, \bar y^0,\bar  y,\bar \beta)\Big)\le C\, d_\infty\left((\bar T, \bar x,\bar\va), (T,x,\va)\right).
$$
for some $C>0$.  Therefore choosing any $\delta >0$
satisfying  $C\delta<\delta'$, we have 
$$
\begin{array}{l}
\h(\bar T,  \bar x(\bar T), \bar\beta(\bar T))=\h((\bar y^0, \bar  y,\bar\beta)(\bar S))\le \\ [1.5ex]
\qquad\qquad\qquad \h((y^0, y,\beta) (S))=\h(T,  x(T),\va(T)),
 \end{array}
$$ 
  for all regular feasible trajectory-control pairs   satisfying \eqref{close1App}.  This confirms (a).  \qed

}\fi

\bibliographystyle{siamplain}
\bibliography{impulsive}

\end{document}